\documentclass[11pt]{article}
\usepackage[left=1in, right=1in, top=1in]{geometry}
\usepackage{xargs}
\usepackage[numbers]{natbib}
\usepackage{fancyhdr}
\geometry{margin=1in}  
\usepackage{setspace}
\usepackage{lastpage}
\usepackage{upgreek}
\usepackage{amsmath,mathtools,amsthm,amsfonts,amssymb}	
\usepackage{amssymb,dsfont,bbm}	
\usepackage{xcolor}  
\usepackage{bm}
\usepackage{enumitem}
\usepackage{mathrsfs}

\bibliographystyle{plain}
\setcitestyle{number}

\usepackage[colorlinks=true,breaklinks=true,bookmarks=true,urlcolor=blue,citecolor=blue,linkcolor=blue,bookmarksopen=false,draft=false]{hyperref}

\usepackage{aliascnt}
\usepackage{cleveref}

\def\bA{\bar{\mathbf{A}}}
\def\barb{\bar{\mathbf{b}}}

\def\thetas{\theta^\star}
\newcommand{\funcA}[1]{\funcAw(#1)}
\def\funcAw{\mathbf{A}}
\def\funcbw{\mathbf{b}}
\newcommand{\funcb}[1]{\funcbw(#1)}
\def\msz{\mathsf{Z}}
\def\Am{{\bf A}}
\def\bm{{\bf b}}
\def\funnoisew{\varepsilon}
\newcommand{\funcnoise}[1]{\funnoisew(#1)}
\def\prtheta{\bar{\theta}}
\def\qcond{\kappa_{P}}
\def\bconst{\mathfrak{b}}
\def\State{Z}

\newcommandx{\zmfuncA}[2][1=]{\tilde{\funcAw}^{#1}(#2)}
\newcommandx{\zmfuncAw}[1][1=]{\tilde{\funcAw}_{#1}}
\newcommandx{\zmfuncb}[2][1=]{\tilde{\funcbw}^{#1}(#2)}
\def\funnoisew{\varepsilon}
\def\supconsteps{\supnorm{\funnoisew}}
\def\mcf{\mathcal{F}}
\newcommand{\supnorm}[1]{\norm{ #1 }[\infty]}
\newcommandx{\norm}[2][2=]{\Vert#1 \Vert_{{#2}}}
\def\smallAconst{\smallConst{\funcAw}}
\newcommand{\smallConst}[1]{\operatorname{c}_{{#1}}}

\def\ProdBa{\ProdB^{(\gamma)}}
\def\ProdB{\Gamma}

\newcommandx{\vartconstwas}[1][1=V]{c_{#1}}
\newcommandx{\ratewas}[1][1=]{\varrho_{#1}}
\def\trace{\operatorname{Tr}}

\newcommandx{\normop}[2][2=]{\Vert{#1}\Vert_{{#2}}}

\newcommand{\ConstD}{\mathsf{D}}
\newcommand{\bConst}[1]{\operatorname{C}_{{\bf #1}}}
\newcommand{\barf}{\bar{f}}
\newcommand{\MKQ}{\mathrm{Q}}
\newcommand{\PP}{\mathbb{P}}

\def\Id{\mathrm{I}}
\def\rmd{\mathrm{d}}
\def\rme{\mathrm{e}}
\def\rset{\mathbb{R}}

\def\Bound{\operatorname{T}}
\def\WBound{\operatorname{G}}

\def\ltwo{\mathrm{L}^2}

\newcommand{\continuation}{??}


\def\eqsp{\,}

\newcommand{\indi}[1]{\mathbbm{1}_{#1}}
\newcommand{\indiacc}[1]{\mathbbm{1}_{\{#1\}}}

\def\PE{\mathsf{E}}
\def\PVar{\mathsf{Var}}

\newcommandx{\CPE}[3][1=]{\mathsf{E}_{#1}^{#3}\bigl[  #2 \bigr]}

\def\B{\mathsf{B}}

\def\nset{\mathbb{N}}
\def\nsets{\mathbb{N}^{\star}}
\def\rset{\mathbb{R}}

\def\tmix{{\boldsymbol{\tau}}}

\newcommand{\tvnorm}[1]{\| #1 \|_{\operatorname{TV}}}
\newcommand{\infnorm}[1]{\| #1 \|_{\operatorname{\infty}}}

\newcommand{\Vnorm}[2][1=V]{\| #2 \|_{#1}}

\lhead{\textit{Rosenthal type inequalities for MC}}
\rhead{Revised Version}

\DeclareMathAlphabet\mathbfcal{OMS}{cmsy}{b}{n} 

\newtheorem{theorem}{Theorem}
\crefname{theorem}{theorem}{Theorems}
\Crefname{Theorem}{Theorem}{Theorems}

\newtheorem{lemma}{Lemma}
\crefname{lemma}{lemma}{lemmas}
\Crefname{Lemma}{Lemma}{Lemmas}

\newtheorem{corollary}{Corollary}
\crefname{corollary}{corollary}{corollaries}
\Crefname{Corollary}{Corollary}{Corollaries}

\newaliascnt{proposition}{theorem}
\newtheorem{proposition}[proposition]{Proposition}
\aliascntresetthe{proposition}
\crefname{proposition}{proposition}{propositions}
\Crefname{Proposition}{Proposition}{Propositions}

\newaliascnt{definition}{theorem}

\aliascntresetthe{definition}
\crefname{definition}{definition}{definitions}
\Crefname{Definition}{Definition}{Definitions}

\newaliascnt{definitionProposition}{theorem}

\aliascntresetthe{definitionProposition}
\crefname{Proposition and Definition}{Proposition and Definition}{Proposition and Definition}
\Crefname{Proposition and Definition}{Proposition and Definition}{Proposition and Definition}

\crefname{remark}{remark}{remarks}
\Crefname{Remark}{Remark}{Remarks}

\crefname{example}{example}{examples}
\Crefname{Example}{Example}{Examples}

\crefname{figure}{figure}{figures}
\Crefname{Figure}{Figure}{Figures}

\newtheorem{assumption}{\textbf{A}\hspace{-3pt}}
\Crefname{assumption}{\textbf{A}\hspace{-3pt}}{\textbf{A}\hspace{-3pt}}
\crefname{assumption}{\textbf{A}}{\textbf{A}}

\newtheorem{assumptionLSA}{\textbf{LSA}\hspace{-3pt}}
\Crefname{assumptionLSA}{\textbf{LSA}\hspace{-3pt}}{\textbf{L}\hspace{-3pt}}
\crefname{assumptionL}{\textbf{L}}{\textbf{L}}

\newtheorem{assumptionMC}{\textbf{MC}\hspace{-1pt}}
\Crefname{assumptionMC}{\textbf{MC}\hspace{-1pt}}{\textbf{C}\hspace{-1pt}}
\crefname{assumptionMC}{\textbf{MC}}{\textbf{MC}}

\newtheorem{assumptionC}{\textbf{C}\hspace{-1pt}}
\Crefname{assumptionC}{\textbf{C}\hspace{-1pt}}{\textbf{C}\hspace{-1pt}}
\crefname{assumptionC}{\textbf{C}}{\textbf{C}}

\Crefname{assumptionG}{\textbf{G}\hspace{-3pt}}{\textbf{G}\hspace{-3pt}}
\crefname{assumptionG}{\textbf{G}}{\textbf{G}}

\def\Zset{\mathsf{Z}}
\def\Zsigma{\mathcal{Z}}

\newcommand{\PEcoupling}[2]{\tilde{\mathbb{E}}_{#1,#2}}
\newcommand{\PPcoupling}[2]{\tilde{\mathbb{P}}_{#1,#2}}
\def\metricz{\mathsf{d}_{\Zset}}

\newcommand{\Wass}[1]{\mathbf{W}_{{#1}}}

\def\Lclass{\mathcal L}
\def\lyapW{W}

\def\deltawas{\delta_*}
\def\vartconstwas{c_{\MKK}}

\def\boundmetric{\kappa_{\MKK}}

\def\ratewas{\varrho}

\def\mrl\mathrm{L}

\def\MK{{\rm Q}}
\def\MKK{{\rm K}}

\def\PE{\mathrm{E}}
\def\msa{\mathsf{A}}

\def\metricc{c}
\def\minorwas{\epsilon}

\newcommand{\CKset}{\bar{\mathsf{C}}}
\newcommand{\Nnorm}[2][1=V]{[ #2]_{#1}}

\newcommand{\lr}[1]{\left( #1 \right)}
\newcommand{\lrb}[1]{\left[ #1 \right]}

\def\rootwas{\zeta}
\def\minorwas{\varepsilon}

\def\Sstat{S}
\newcommand{\ConstA}{\mathsf{A}}
\def\covgammatheta{\bar{\Sigma}_{\gamma}}
\def\couplingmeasure{\mathcal{C}}
\def\cost{\mathsf{c}}
\def\mrl{\mathrm{L}}
\newcommand{\abs}[1]{\left\vert #1 \right\vert}
\def\qbf{\mathbf{q}}

\lhead{\textit{Rosenthal-type inequalities for Markov chains}}
\rhead{Durmus~et al.}

\begin{document}

\title{Rosenthal-type inequalities for linear statistics of Markov chains}

\author{A. Durmus~\footnote{Ecole Polytechnique, Paris, France,  \texttt{alain.durmus@polytechnique.edu}. }, \, E. Moulines~\footnote{Ecole Polytechnique, Paris, France, and Mohamed bin Zayed University of Artificial Intelligence (MBZUAI), UAE,  \texttt{eric.moulines@polytechnique.edu}.}, \, A. Naumov~\footnote{HSE University, Moscow, Russia,  \texttt{anaumov@hse.ru}.}, \, S. Samsonov~\footnote{HSE University, Moscow, Russia,  \texttt{svsamsonov@hse.ru}.}, \, M.Sheshukova~\footnote{HSE University, Moscow, Russia,  \texttt{msheshukova@hse.ru}.}}

\maketitle

\begin{abstract}
In this paper, we establish novel concentration inequalities for additive functionals of geometrically ergodic Markov chains similar to  Rosenthal inequalities for sums of independent random variables. We pay special attention to the dependence of our bounds on the mixing time of the corresponding chain. Precisely, we establish explicit bounds that are linked to the constants from the martingale version of the Rosenthal inequality, as well as the constants that characterize the mixing properties of the underlying Markov kernel. Finally, our proof technique is, up to our knowledge, new and is based on a recurrent application of the Poisson decomposition.
\end{abstract}

\section{Introduction}
\label{sec:intro}
Let $\{Y_\ell\}_{\ell=0}^{n-1}$ be a sequence of independent centered random variables. The Rosenthal inequality provides the following moment bound  for $\sum\nolimits_{\ell=0}^{n-1} Y_{\ell}$ (see \cite{Rosenthal1970,pinelis_1994}): for any $p \geq 2$,
\begin{equation}
\label{eq:ros_independent_pinelis}
\textstyle \PE^{1/p}\left[\left|\sum_{\ell=0}^{n-1} Y_{\ell}\right|^{p}\right] 
\le C \left\{ p^{1/2} \PVar^{1/2}\left(\sum_{\ell=0}^{n-1} Y_{\ell}\right) 
+  p \PE^{1/p}\left[\max_{\ell \in \{0,\dots,n-1\}}\left|Y_{\ell}\right|^p\right] \right\} \eqsp,
\end{equation}
where $C$ is a universal constant. 
Following prior works such as \cite{clemenccon2001moment,doukhan:louhichi:1999,dedecker2015deviation,dedecker2019deviation,doukhan2007probability,durmus2024probability}, the main goal of this paper is to derive bounds analogous to \eqref{eq:ros_independent_pinelis} for linear statistics of a Markov chain $(X_{\ell})_{\ell\in\nset}$ on a measurable space $(\Zset,\Zsigma)$. Namely, we consider  sums of the form
\begin{equation}
\label{eq:def_S_n_g_main}
S_n = \sum_{\ell=0}^{n-1}\{f(X_{\ell}) - \pi(f)\} \eqsp, \qquad n \geq 1 \eqsp, 
\end{equation}
where $(X_\ell)_{\ell \in \nset}$ is a Markov chain with Markov kernel $\MK$, which has a unique invariant distribution $\pi$, and $f: \Zset \to \rset$ is a $\pi$-integrable measurable function. We assume that $X_0$ follows some distribution $\xi$, which might be different from $\pi$. We aim to recover a counterpart of the Rosenthal inequality for $S_n$, with particular attention to the dependence of the resulting bounds on the mixing time associated with the Markov kernel $\MK$ and the asymptotic variance of $(S_n)_{n\in\nset}$. We establish our results under three different geometric ergodicity conditions on the Markov kernel $\MK$. Specifically, we consider Markov kernels $\MK$ such that their iterates $\xi \MK^{n}$ converge to the stationary distribution $\pi$ at an exponential rate, either in the total variation distance, the $V$-total variation norm (weighted total variation distance), or in the Wasserstein semi-metric associated with a general cost function. The latter setting is of particular interest, as it covers the case of non-irreducible Markov kernels, which naturally arise in applications such as stochastic approximation algorithms.
\par 
Under each of the aforementioned conditions, we establish bounds with explicit constants and explicit dependence on the parameters of the underlying Markov chain. These results can be summarized as follows: for any initial distribution $\xi$, $n \in \nset$, and $p \geq 2$,  
\begin{equation}
\label{eq:rosenthla_mc_intro}
 \PE_{\xi}^{1/p}\bigl[\bigl| \Sstat_n \bigr|^{p}\bigr]  \leq  C\left(p^{1/2}n^{1/2}\sigma_{\pi}(f) +
     \ConstA_{1,\pi,f} \, p^{\beta} n^{1/4}\tmix^{3/4} + \ConstA_{2,\pi,f,\xi}\, p^{\beta} \tmix   u_n \right) \eqsp,
\end{equation}
where $\sigma_{\pi}(f)$ is the asymptotic variance associated with $(S_n)_{n \in \nset}$ (see its precise definition below in \eqref{eq:asympt_var_def}), $C$ is a universal constant, $\ConstA_{1,\pi,f}$ (resp. $\ConstA_{2,\pi,f,\xi}$) are constants depending only on $\pi,f$ (resp. $\pi,f,\xi$), $\beta \geq 0$, and $(u_n)_{n \in \nset}$ is a sequence depending only on $n$ and satisfying $u_n = O(n^{1/(2p)})$. Under additional conditions on $f$, we improve our results, showing that $u_n = \log^{\gamma+1}(n)$ for $\gamma \geq 0$. As a consequence, optimizing over the exponent $p$, we obtain Bernstein-type inequalities for $(S_n)_{n\in\nset}$, extending and completing the literature on this subject \cite{merlevede2011bernstein,paulin2015concentration,dedecker2015deviation,jiang2018bernstein,doukhan2007probability,adamczak2008tail,adamczak2015exponential,lemanczyk2020general}.
\par 
The main challenge we address in our bounds is to provide bounds of the form \eqref{eq:rosenthla_mc_intro} with a sharp leading term, that is, the one driven by the asymptotic variance $\sigma_{\pi}(f)$. In particular, the coefficient in front of the leading (in $n$) term $p^{1/2} \sigma_{\pi}(f)$ matches the $p$-th moment of a limiting Gaussian distribution of the normalized sequence $(n^{-1/2}S_n)_{n \in \mathbb{N}}$. This result contrasts with prior approaches, which either express the bounds in terms of $\PVar_{\pi}(S_n)$ instead of $\sigma_{\pi}(f)$ \cite{doukhan2007probability,durmus2024probability}, or rely on various variance proxies \cite{dedecker2015deviation,jiang2018bernstein,bertail2018new,dedecker2019deviation}, or impose additional assumptions such as strong aperiodicity of the underlying Markov kernel or bounded functions \cite{adamczak2015exponential,lemanczyk2020general}. 
\par 
Furthermore, with a few exceptions such as \cite{paulin2015concentration,jiang2018bernstein}, most existing works \cite{dedecker2015deviation,adamczak2015exponential,bertail2018new,merlevede2011bernstein,lemanczyk2020general} do not make the dependence on the mixing time $\tmix$ explicit for the higher-order terms. When this dependence is tracked, it is typically inhomogeneous in $\tmix$ and $n$. Formally, this means that achieving a precision of $\varepsilon > 0$ for $\PE^{1/p}[|\Sstat_n/n|^{p}] \leq \varepsilon$ requires that $n$ scales as $\varepsilon^{-1} \tmix^{\alpha}$ with $\alpha > 1$, or even exhibits an exponential dependence of $n$ upon $\tmix$. In contrast, our bounds are homogeneous in $\tmix$ and $n$ (up to logarithmic factors). This refined dependence is important for applications in Markov chain Monte Carlo (MCMC) methods and stochastic approximation.
\par 
Our proof method is, to the best of our knowledge, new and is based on the Poisson equation decomposition associated with $\MKQ$; see \cite[Chapter 21]{douc:moulines:priouret:soulier:2018}. This technique allows us to relate the Rosenthal inequality for Markov chains to its counterpart for martingale-difference sequences, provided e.g. in \cite{pinelis_1994}. The main novelty of our approach lies in the recursive application of the Rosenthal inequality with different moment values $p$, a crucial factor in obtaining an explicit and homogeneous dependence on the mixing time of the Markov chain. Importantly, our method relies only on certain properties of the solution to the Poisson equation and can be generalized beyond geometrically ergodic Markov kernels.
\par
The structure of the paper is as follows. We present our main results in \Cref{sec:main-results}. First, we consider uniformly geometrically ergodic (UGE) Markov kernels in \Cref{sec:uge-chains}, and then we generalize our results in \Cref{sec:geom-v-ergod} and \Cref{sec:WGE} with geometric rates of convergence in the $V$-total variation norm and the Wasserstein semi-metric, respectively. In \Cref{sec:discussion_litt}, we provide a detailed literature review and compare our results with existing counterparts. In \Cref{sec:apps}, we discuss applications of the developed results to MCMC methods and linear stochastic approximation. The main proofs are collected in \Cref{sec:proofs}. At the beginning of \Cref{sec:proofs}, we provide a brief outline of our proof technique for Markov kernels under UGE. Additional technical lemmas are given in \Cref{appB}. For the reader's convenience, we have compiled all constants from the technical lemmas and main results in \Cref{sec:constants}. Some technical proofs are provided in the supplementary material.


\par 
\noindent\textbf{Notation and definitions.}
\label{sec:notations}
 Let $(\Zset,\Zsigma)$ be a measurable space. If $(\Zset,\metricz)$ is a complete separable metric space, then $\Zsigma$ is taken to be its Borel $\sigma$-field. Denote by $(\Zset^{\nset},\Zsigma^{\otimes \nset})$ be the corresponding canonical space. Consider the Markov kernel $\MKQ$ defined on $\Zset \times \Zsigma$ and denote by $\PP_{\xi}$ and $\PE_{\xi}$ the corresponding probability distribution and expectation with initial distribution $\xi$. Without loss of generality, we assume that $(Z_k)_{k \in \nset}$ is the corresponding canonical process. By construction, for any $\msa \in \Zsigma$, it holds that $\PP_{\xi}(Z_k \in \msa | Z_{k-1}) = \MKQ(Z_{k-1},\msa)$, $\PP_{\xi}$-a.s. If $\xi = \delta_{z}$, $z \in \Zset$, we write $\PP_{z}$ and $\PE_{z}$ instead of $\PP_{\delta_{z}}$ and $\PE_{\delta_{z}}$, respectively. We denote $\mcf_{i} = \sigma(Z_j, j \leq i)$.
 \par 

 For a (signed) measure $\mu$ on $(\Zset,\Zsigma)$ and a measurable function $f: \Zset \to \rset$, we denote by $\mu(f) = \int\nolimits_{\Zset}f(z)\mu(\rmd z)$. For a measurable function $V: \Zset \to [1,\infty)$, we define $\mrl_V$ as a set of all measurable functions $f: \Zset \to \rset$, such that $\Vnorm[V]{f}= \sup_{z \in \Zset} \{|f(z)|/V(z)\} < \infty$.
The $V$-norm (also referred to as $V$-total variation norm) of a signed measure $\mu$ is defined as $\Vnorm[V]{\mu} = \int\nolimits_{\Zset} V(z) \abs{\mu}(\rmd z)$,
where $\abs{\mu}$ is the total variation of $\mu$. When $V \equiv 1$, the $V$-norm is the total variation norm and is denoted by $\tvnorm{\cdot}$. Equivalently, we can define $\Vnorm[V]{\mu} = \sup\{ \mu(f) \, :\, \Vnorm[V]{f}\leq 1\}$ (see \cite[Theorem D.3.2]{douc:moulines:priouret:soulier:2018} for details).
\par 
For a vector $x \in \rset^{d}$ and matrix $\mathbf{A} \in \rset^{d \times d}$ we write $\norm{x}$ and $\norm{\mathbf{A}}$ for their Euclidean and spectral norm, respectively. We write $\sigma_{\min}(\mathbf{A})$ for its minimal singular value. One of the goals of the paper is to provide results with explicit constants. To avoid burdening the theorem statements, we have compiled the constants in a table. The notation $\ConstD_{xx,y}$ means that the respective constant is defined in the Theorem, Proposition, Lemma, etc numbered $xx$.

\section{Main results}
\label{sec:main-results}
\subsection{Uniformly geometrically ergodic Markov kernels}
\label{sec:uge-chains}
In what follows, we first consider the setting of a uniformly geometrically ergodic Markov kernel $\MKQ$, before turning to more general ergodicity conditions. Specifically, we assume that:
\begin{assumption}
\label{ass:UGE}
The Markov kernel $\MKQ$ admits a unique invariant distribution $\pi$. Moreover, $\MKQ$ is uniformly geometrically ergodic, i.e., there exists $\tmix \in \nset$ such that for all $n \in \nset$ and $z \in \Zset$,
\begin{equation*}
(1/2)\tvnorm{\MKQ^{n}(z, \cdot) - \pi} \leq (1/4)^{\left\lfloor n/\tmix \right\rfloor}\eqsp.
\end{equation*}
\end{assumption}
This setting is well studied, and counterparts of Rosenthal's inequality \eqref{eq:ros_independent_pinelis} can be established under \Cref{ass:UGE} using various techniques; see  \Cref{sec:discussion_litt}. The parameter $\tmix$ in \Cref{ass:UGE} is commonly referred to as the \emph{mixing time} of $\MKQ$, following \cite[Definition~1.3]{paulin2015concentration}. In this section, we consider the statistics $\Sstat_n$ defined in \eqref{eq:def_S_n_g_main}, with $f : \Zset \mapsto \rset$ being a bounded measurable function. Throughout the paper, we set $\bar{f}(z) = f(z) - \pi(f)$, $z \in \Zset$. It is well known (see \cite[Corollary 5]{jones:2009} or \cite[Chapter~19]{ibragimov1971}) that under \Cref{ass:UGE} the central limit theorem (CLT) holds:  
\[
\sqrt{n} \Sstat_n \overset{\text{dist}}{\to} \mathcal{N}(0,\sigma^{2}_{\pi}(f))\eqsp, \text{ as $n\to \infty$}\eqsp, 
\]
where the asymptotic variance $\sigma^{2}_{\pi}(f)$ has the form  
\begin{equation}
\label{eq:asympt_var_def}
\sigma^{2}_{\pi}(f) = \pi(\bar{f}^2)  + 2\sum_{\ell = 1}^{\infty} \pi(\barf \MKQ^{\ell} \barf) \eqsp.
\end{equation}

\noindent Now we state the main result of this section, which is a version of \eqref{eq:rosenthla_mc_intro} with explicit constants.
\begin{theorem}
\label{theo:rosenthal}
Under \Cref{ass:UGE}, for any bounded measurable function $f: \Zset \rightarrow \rset$, and $p \geq 2$,
\begin{multline}
\label{eq:main_text_rosenthal_pi}
\PE_{\pi}^{1/p}[| \Sstat_n|^p] \leq \bConst{\sf{Rm}, 1} \sqrt{2}p^{1/2}n^{1/2}\sigma_{\pi}(f) \\ +2\ConstD_{\ref{lem:UGE_dyadic},1} n^{1/4}\tmix^{3/4}p\log_2(2p) \infnorm{f} + 2\ConstD_{\ref{lem:UGE_dyadic},2} \tmix p \log_2(2p) \infnorm{f}\eqsp,
\end{multline}
where $\bConst{\sf{Rm}, 1},\ConstD_{\ref{lem:UGE_dyadic},1}$ and $\ConstD_{\ref{lem:UGE_dyadic},2}$ are universal constants gathered in \Cref{tab:univ_constants} in \Cref{sec:constants}. Moreover, for any initial probability $\xi$ on $(\Zset,\Zsigma)$,
\begin{equation}
\label{eq:main_text_rosenthal_ksi}
\PE_{\xi}^{1/p}[| \Sstat_n|^p] \leq \PE_{\pi}^{1/p}[| \Sstat_n|^p] + 2 \ConstD_{\ref{lem:UGE_dyadic},2} \tmix p \log_2(2p) \infnorm{f}\eqsp.
\end{equation}
\end{theorem}
The proof is given in \Cref{sec:proof:theo:rosenthal}. Note that the constants in the bound \eqref{eq:main_text_rosenthal_pi}, namely $\bConst{\sf{Rm}, 1}$, $\ConstD_{\ref{lem:UGE_dyadic},1}$, and $\ConstD_{\ref{lem:UGE_dyadic},2}$, are universal (numeric) and depend only on the constants from Rosenthal's inequality for martingales given in \cite{pinelis_1994}. In the case of a bounded vector-valued function $f: \Zset \rightarrow \rset^{d}$, the counterpart of \Cref{theo:rosenthal} follows with $\sigma_{\pi}(f)$ replaced by $\sqrt{\trace{\Sigma_{\pi}(f)}}$, where
\begin{equation}
\label{eq:asympt_var_def_vector}
\Sigma_{\pi}(f) = \pi(\bar{f} \bar{f}^{\top}) + 2\sum_{\ell = 1}^{\infty} \pi(\barf (\MKQ^{\ell} \barf)^{\top}) \eqsp, 
\end{equation}
and $\infnorm{f} = \sup_{z \in \Zset}\norm{f(z)}$, with $\norm{\cdot}$ denoting the standard (Euclidean) norm on $\rset^{d}$.
\begin{theorem}
\label{theo:rosenthal_large_dimention}
Under \Cref{ass:UGE}, for any bounded measurable function $f: \Zset \rightarrow \rset^{d}$, and $p \geq 2$,
\begin{multline*}
\PE_{\pi}^{1/p}[\| \Sstat_n\|^p] \leq \bConst{\sf{Rm}, 1} \sqrt{2}p^{1/2}n^{1/2}\sqrt{\trace{\Sigma_{\pi}(f)}} \\ 
+ 2\ConstD_{\ref{lem:UGE_dyadic},1} n^{1/4}\tmix^{3/4}p\log_2(2p) \infnorm{f} + 2\ConstD_{\ref{lem:UGE_dyadic},2} \tmix p \log_2(2p) \infnorm{f}\eqsp.
\end{multline*}
Moreover, for any initial probability $\xi$ on $(\Zset,\Zsigma)$,
\begin{equation*}
\PE_{\xi}^{1/p}[\| \Sstat_n\|^p] \leq \PE_{\pi}^{1/p}[\| \Sstat_n\|^p] + 2 \ConstD_{\ref{lem:UGE_dyadic},2} \tmix p \log_2(2p) \infnorm{f}\eqsp.
\end{equation*}
\end{theorem}

The same remark applies to the main results of the subsequent sections (namely, \Cref{theo:rosenthal_VGE} and \Cref{theo:rosenthal_WGE_polynom}). Using Markov inequality (see details in \Cref{lem:deviation_bound}), \Cref{theo:rosenthal} straightforwardly implies a Bernstein-type inequality.

\begin{corollary}
\label{cor:uge_bound}
Assume \Cref{ass:UGE}.
For any $\delta \in (0,1/\rme^2)$, with $\PP_{\pi}$-probability at least $1-\delta$,
\begin{equation*}
| \Sstat_n| \leq \bConst{\sf{Rm}, 1} \sqrt{2 n \ln(1/\delta)}\sigma_{\pi}(f) + 
2\ConstD_{\ref{lem:UGE_dyadic},1} n^{1/4}\tmix^{3/4} \infnorm{f} \widetilde{\ln}(1/\delta)  + 2\ConstD_{\ref{lem:UGE_dyadic},2} \tmix \infnorm{f} \widetilde{\ln}(1/\delta) \eqsp,
\end{equation*}
where we set $\widetilde{\ln}(s) = \ln(s) \log_2(2 \ln(s))$.
\end{corollary}

The attractive feature of bounds provided in \Cref{theo:rosenthal} and \Cref{cor:uge_bound} is that, once divided by $n$, their right-hand side takes the form 
\[
\textstyle 
C_1\sigma_{\pi}(f)/n^{1/2} + C_2(\tmix/n)^{3/4} + C_3\tmix/n\eqsp,
\]
with constants $C_1,C_2,C_3 \geq 0$ independent of $\tmix$ and $n$. Since $\sigma_{\pi}(f) \lesssim \tmix^{1/2}$, this bound shows that the required number of summands $n$ grows only linearly with the mixing time $\tmix$ in order to guarantee $\PE_{\pi}^{1/p}[|\Sstat_n/n|^p] \leq \epsilon$ (or $|\Sstat_n/n| \leq \epsilon$ with high probability). 

\subsection{$V$-uniformly geometrically ergodic Markov kernels}
\label{sec:geom-v-ergod}
We now consider the case where the Markov kernel $\MKQ$ is $V$-uniformly geometrically ergodic. This setup generalizes the one considered in \Cref{sec:uge-chains} and covers many scenarios of interest; see, e.g., \cite{meyn:tweedie,douc:moulines:priouret:soulier:2018}. Let $V: \Zset \rightarrow [\rme; +\infty)$. Note that we require the function $V$ to take values in $[\rme;+\infty)$ rather than $[1,+\infty)$, as is common in the literature. This choice simplifies the presentation and the proof of our subsequent results, since we consider the class of functions $\mrl_{\log{V}}$. We make the following assumption:
\begin{assumption}
\label{ass:VGE}
The Markov kernel $\MKQ$ has a unique invariant distribution $\pi$, $\pi(V) < \infty$, and there exist $\tmix \in \nset$ and $\varkappa \geq 1$ such that for any $n \in \nset$,
\begin{equation}
\label{eq:VGE}
\sup_{z,z' \in \Zset} \frac{\Vnorm[V]{\delta_z \MKQ^n - \delta_{z'} \MKQ^n}}{V(z) + V(z')} \leq \varkappa \bigl(1/4\bigr)^{\lfloor n/\tmix \rfloor}\eqsp.
\end{equation}
\end{assumption}

There are numerous tools to verify \Cref{ass:VGE} and to compute mixing time $\tmix$. The classical approach is based on geometric drift assumptions \emph{à la} Foster--Lyapunov and minorization conditions for kernel iterates on "small" sets; see, e.g., \cite[Chapter~15--16]{meyn:tweedie}, \cite{roberts:rosenthal:2004}, and \cite[Chapter~19]{douc:moulines:priouret:soulier:2018}. \Cref{ass:VGE} implies that for any probability measure $\xi$ with $\xi(V) < \infty$ and $n \in \nset$,  
\[
\Vnorm[V]{\xi \MKQ^n - \pi} \leq \varkappa (\xi(V) + \pi(V)) \bigl(1/4\bigr)^{\lfloor n/\tmix \rfloor}\eqsp.
\]
Therefore, \Cref{ass:VGE} is a generalization of \Cref{ass:UGE}. In the following, we present an analogue of \Cref{theo:rosenthal} under this weaker assumption, at the expense of slightly degraded (in terms of $p$) and more complex bounds.

\begin{theorem}
\label{theo:rosenthal_VGE}
Assume \Cref{ass:VGE} and let $\beta \geq 0$. Then for any measurable function $f: \Zset \rightarrow \rset$ with $\Vnorm[\log^{\beta}{V}]{f} < \infty$, $p \geq 2$, and $n \geq 3$, it holds that
\begin{multline*}
 \PE_{\pi}^{1/p}\bigl[\bigl| \Sstat_n \bigr|^{p}\bigr]  \leq  \bConst{\sf{Rm}, 1}\sqrt{2}p^{1/2}n^{1/2}\sigma_{\pi}(f) + \ConstD^{(\beta)}_{\ref{theo:rosenthal_VGE},1}  \{\varkappa \pi(V)\}^{2/p} p^{2+\beta} \log_2(2p) n^{1/4}\tmix^{3/4}  \Vnorm[\log^{\beta}{V}]{f} \\
+ \ConstD^{(\beta)}_{\ref{theo:rosenthal_VGE},2} \{\varkappa \pi(V)\}^{1/p} p^{2+\beta} \log_2(2p) (\log{n})^{\beta+1} \tmix \Vnorm[\log^{\beta}{V}]{f}\eqsp,
\end{multline*}
where 
\begin{equation*}
\ConstD^{(\beta)}_{\ref{theo:rosenthal_VGE},1} = 4 \ConstD_{\ref{lem: rosenthal_VGE_for_2^s}, 1} (2\beta / \rme)^{\beta}\eqsp, \quad \ConstD^{(\beta)}_{\ref{theo:rosenthal_VGE},2} = 4 \ConstD_{\ref{lem: rosenthal_VGE_for_2^s}, 2} (2\beta / \rme)^{\beta} \eqsp,
\end{equation*}
and 
$\bConst{\sf{Rm}, 1}$, $\ConstD_{\ref{lem: rosenthal_VGE_for_2^s}, 1}$, $\ConstD_{\ref{lem: rosenthal_VGE_for_2^s}, 2}$ are universal constants gathered in \Cref{tab:univ_constants} in \Cref{sec:constants}. Moreover, for any
initial probability $\xi$ on $(\Zset, \Zsigma)$,
\begin{equation}
\label{eq:main_rosenthal_VGE_xi}
\PE_{\xi}^{1/p}\bigl[\bigl| \Sstat_n \bigr|^{p}\bigr]  \leq \PE_{\pi}^{1/p}\bigl[\bigl| \Sstat_n \bigr|^{p}\bigr]  + (32/15) (\beta / \rme)^{\beta} \varkappa^{1/p} \bigl(\xi(V) + \pi(V)\bigr)^{1/p} p^{\beta+1}\tmix \Vnorm[\log^{\beta}{V}]{f}\eqsp.
\end{equation}
\end{theorem}
The proof is given in \Cref{sec:proof_rosenthal_VGE}. Similarly to \Cref{theo:rosenthal}, the right-hand side of \Cref{theo:rosenthal_VGE}, after normalization by $n$, is a function of the ratio $\tmix/n$, up to logarithmic factors. Below we provide a version of \Cref{theo:rosenthal_VGE} to control $\PE_{\pi}^{1/p}\bigl[\bigl| \Sstat_n \bigr|^{p}\bigr]$, relaxing the growth condition on $f$ and assuming only that $\Vnorm[V^{1/2p}]{f} < \infty$ for some $p \geq 2$.
\begin{theorem}
\label{theo:rosenthal_VGE_polynom}
Assume \Cref{ass:VGE} and let $p \geq 2$. Then for any measurable function $f: \Zset \rightarrow \rset$ with $\Vnorm[V^{1/2p}]{f} < \infty$, and $n \geq 3$, it holds that
\begin{multline*}
 \PE_{\pi}^{1/p}\bigl[\bigl| \Sstat_n \bigr|^{p}\bigr]  \leq  \bConst{\sf{Rm}, 1}\sqrt{2}p^{1/2}n^{1/2}\sigma_{\pi}(f) + 4 \ConstD_{\ref{lem: rosenthal_VGE_for_2^s}, 1} \{\varkappa \pi(V)\}^{2/p} p^2 \log_2(2p) n^{1/4}\tmix^{3/4}\Vnorm[V^{1/2p}]{f}  \\
    + 4 \ConstD_{\ref{lem: rosenthal_VGE_for_2^s}, 2} \{\varkappa\pi(V)\}^{1/p} p^2 \log_2(2p) n^{1/p}\tmix \Vnorm[V^{1/2p}]{f}\eqsp.
\end{multline*}
Moreover, for any
initial probability $\xi$ on $(\Zset, \Zsigma)$,
\begin{equation*}
\PE_{\xi}^{1/p}\bigl[\bigl| \Sstat_n \bigr|^{p}\bigr]  \leq \PE_{\pi}^{1/p}\bigl[\bigl| \Sstat_n \bigr|^{p}\bigr]  + (32/15) \varkappa^{1/p}\bigl(\xi(V) + \pi(V)\bigr)^{1/p}p\tmix\Vnorm[V^{1/2p}]{f} \eqsp.
\end{equation*}
\end{theorem}
The proof follows the same lines as \Cref{theo:rosenthal_VGE} and is omitted. Note that in \Cref{theo:rosenthal_VGE_polynom}, we assume that $\Vnorm[V^{1/2p}]{f} < \infty$ in order to bound the $p$-th moment of $\Sstat_n$. However, it is expected that the weaker condition $\Vnorm[V^{1/p}]{f} < \infty$ should be sufficient. The stronger condition we impose on $f$ is most likely an artifact of the proof based on a dyadic decomposition. Similarly to the UGE setting \Cref{ass:UGE}, we can derive Bernstein-type bounds for $\Sstat_n$ under \Cref{ass:VGE}, based on \Cref{theo:rosenthal_VGE} and \Cref{lem:deviation_bound}. The precise statement is given in Subsection 3 of \Cref{sec:supplement}.

\subsection{Geometrically ergodic Markov kernels with respect to Wasserstein semi-metric}
\label{sec:WGE}
In this section, we extend the results obtained in \Cref{sec:uge-chains} and \Cref{sec:geom-v-ergod} to the case of Markov kernels that are geometrically contracting with respect to the Wasserstein semi-metric; see \cite{hairer2008ergodic,hairer2011asymptotic} and \cite[Chapter~20]{douc:moulines:priouret:soulier:2018}. This setting covers cases where the Markov kernel is not irreducible. In this context,  most techniques based on regeneration methods (see, e.g., \cite{adamczak2015exponential}) are not applicable, since they rely on the splitting method \cite{athreya1978new,Nummelin1978AST}, which in turn requires the existence of a small set to split the sum $\Sstat_n$ into a random number of independent blocks.
\par 
Important examples of non-irreducible Markov chains are given by stochastic algorithms and Markov chains in infinite dimensions; see \cite{hairer:stuart:vollmer:2012,eberle:2014} and \cite[Chapter~20]{douc:moulines:priouret:soulier:2018}. 

We now introduce central concepts and definitions for this section. First, we define the Wasserstein semimetric $\Wass{\cost}$ with respect to a cost function $\cost: \Zset \times \Zset \to \rset_{+}$, 
satisfying:
\begin{assumption}
\label{ass:cost_fun}
The function $\cost: \Zset \times \Zset \rightarrow \rset_{+}$ is symmetric, lower semi-continuous and
$\cost(x, y) = 0$ if and only if $x = y$. Moreover, there exists $q \in \nset$ such that
$(d(x, y) \wedge 1)^q \leq \cost(x, y) \leq 1$.
\end{assumption}
For two probability measures $\xi$ and $\xi'$ on $(\Zset,\Zsigma)$, we say that a probability measure $\nu$ on $(\Zset^2,\Zsigma^{\otimes 2})$ is a coupling of $\xi$ and $\xi'$, if for each $\msa \in \Zsigma$, $\nu(\msa \times \Zset) = \xi(\msa)$ and $\nu( \Zset \times \msa) = \xi'(\msa)$. Denote by $\mathcal{C}(\xi,\xi')$  the set of couplings of $\xi$ and $\xi'$. The Wasserstein semimetric $\Wass{\cost}$, associated with the cost function $\cost$, is defined for  two probability measures $\xi$ and $\xi'$ on $(\Zset,\Zsigma)$ as
\begin{equation}
\label{eq:def_wasserstein_distanse}
\Wass{\cost}(\xi,\xi') = \underset{\nu \in \mathcal{C}(\xi, \xi')}{\inf}\int_{\Zset \times \Zset} \cost(z, z')\nu (\rmd z, \rmd z')\eqsp.
\end{equation}

As in \Cref{sec:geom-v-ergod}, we fix a function $V: \Zset \to [\rme; +\infty)$. Equipped with $\Wass{c}$ and \Cref{ass:cost_fun}, we can state the main assumption of this section.
\begin{assumption}
\label{assum:wasserstein-convergence}
The Markov kernel $\MK$ has a unique invariant probability measure $\pi$ satisfying $\pi(V) < \infty$, and there exist $\tmix \in \nset$, $\varkappa_{\cost} \geq 1$, $\varsigma \geq 1$, such that, for any $\alpha \in (0,1/2]$ and $n \in \nset$
\begin{equation}
\label{eq:wasser:geo:bound:pi}
\sup_{z,z' \in \Zset} \frac{\Wass{\cost^{1/2} V^\alpha}( \delta_z \MKQ^n, \delta_{z'} \MKQ^n)}{\cost^{1/2}(z, z')\bigl(V^{\alpha}(z)+V^{\alpha}(z')\bigr)} \leq \varsigma
 \varkappa_{\cost}^{\alpha} (1/4)^{\lfloor n/\tmix\rfloor \alpha} \eqsp.
\end{equation}
\end{assumption}

 It is possible to establish \eqref{eq:wasser:geo:bound:pi}, with $V$ constant, using the $\cost$-Dobrushin coefficient associated with $\MKQ$. However, this approach and its corresponding results are very limited in scope; see \cite[Chapter 20]{douc:moulines:priouret:soulier:2018}. In contrast, \cite{hairer2011asymptotic,durmus2015quantitative} have shown that \Cref{assum:wasserstein-convergence} can be obtained by combining a geometric Foster--Lyapunov drift condition \cite[Chapters~15--16]{meyn:tweedie} with a $d$-small set condition. For completeness, we provide in Subsection~1 of \Cref{sec:supplement} a detailed account of these conditions and the derivations establishing \Cref{assum:wasserstein-convergence} from them.
 Finally, the form of \Cref{assum:wasserstein-convergence} follows directly from these results; see \Cref{cor:wasserstein}.

\par 
Define the function $\bar V : \Zset \times \Zset \to [\rme,+\infty)$ by $\bar V(z,z') = \{V(z) + V(z')\}/2$ for any $z,z'\in\Zset$. In addition, for $\beta \geq 0$, we define the weighted seminorm of $f :\msz \to \rset$ by the formula 
\[
\Nnorm[\beta, V]{f} = \max \left\{ \underset{z, z' \in \Zset, z \neq z'}{\sup} \frac{|f(z) - f(z')|}{\cost^{1/2}(z,z')\bar{V}^{\beta}(z, z')}, \underset{z \in \Zset}{\sup}\eqsp \frac{|f(z)|}{V^{\beta}(z)}\right\}\eqsp,
\]
and denote by $\mathcal{L}_{\beta, V}$ the class of functions ${\mathcal{L}_{\beta, V} = \{f: \Zset \rightarrow \mathbb{R}: \Nnorm[\beta, V]{f} < \infty \}}$.
Under \Cref{ass:cost_fun} and \Cref{assum:wasserstein-convergence} from \cite[Corollary 20.4.7]{douc:moulines:priouret:soulier:2018} for $\alpha  \in (0,1/2]$ and $f \in \mathcal{L}_{\alpha, V} $ it holds
\begin{equation*}
|\xi Q^n(f) - \pi(f)| \leq \varsigma \varkappa_{\cost}^{\alpha} (1/4)^{\lfloor n/\tmix\rfloor \alpha}\{\xi(V^{\alpha})+\pi(V^{\alpha})\}\Nnorm[\alpha, V]{f} \eqsp.
\end{equation*}
Therefore, $\sigma_{\pi}(f)$ in \eqref{eq:asympt_var_def} is well defined for any $f \in \mathcal{L}_{1/2,V}$.

\begin{theorem}
\label{theo:rosenthal_WGE}
Assume  \Cref{ass:cost_fun} and \Cref{assum:wasserstein-convergence}, and let $\beta \geq 0$. Then for any measurable function $f: \Zset \rightarrow \rset$ with $\Nnorm[\beta, \log V]{f} < \infty$, $p \geq 2$, and $n \geq 3$, it holds that
\begin{multline*}
\PE_{\pi}^{1/p}\bigl[\bigl| \Sstat_n \bigr|^{p} \bigr] \leq \bConst{\sf{Rm}, 1}\sqrt{2}p^{1/2}n^{1/2}\sigma_{\pi}(f) +  \ConstD_{\ref{theo:rosenthal_WGE},1}^{(\beta)} p^{\beta+2}n^{1/4}\tmix^{3/4} \varsigma \{\varkappa_{\cost} \pi(V)\}^{2/p}\log_2(2p) \Nnorm[\beta, \log V]{f} \\ + \ConstD_{\ref{theo:rosenthal_WGE},2}^{(\beta)} p^{\beta +2} \tmix \varsigma (\log{n})^{\beta+1} \{\varkappa_{\cost} \pi(V)\}^{1/p}\log_2(2p) \Nnorm[\beta, \log V]{f} \eqsp,
\end{multline*}
where 
\begin{equation*}
\ConstD_{\ref{theo:rosenthal_WGE},1}^{(\beta)} = 4\ConstD_{\ref{lem: rosenthal_WGE_for_2^s}, 1}(2\beta / \rme)^{\beta}\eqsp, \quad \ConstD_{\ref{theo:rosenthal_WGE},2}^{(\beta)} = 4\ConstD_{\ref{lem: rosenthal_WGE_for_2^s}, 2}(2\beta / \rme)^{\beta} \eqsp, 
\end{equation*}
and $\ConstD_{\ref{lem: rosenthal_WGE_for_2^s}, 1}$, $\ConstD_{\ref{lem: rosenthal_WGE_for_2^s}, 2}$ are universal constants gathered in \Cref{tab:univ_constants} in \Cref{sec:constants}. Moreover, for any
initial probability $\xi$ on $(\Zset, \Zsigma)$,
\begin{equation*}
\PE_{\xi}^{1/p}\bigl[\bigl| \Sstat_n \bigr|^{p}\bigr]  \leq \PE_{\pi}^{1/p}\bigl[\bigl| \Sstat_n \bigr|^{p}\bigr]  + \tfrac{32}{15} (2\beta/\rme)^{\beta} p^{\beta + 1} \varsigma \tmix \varkappa_{\cost}^{1/(2p)}(\{\pi(V)\}^{1/(2p)} + \{\xi(V)\}^{1/(2p)})\Nnorm[\beta, \log V]{f}\eqsp.
\end{equation*}
\end{theorem}
The proof is given in \Cref{sec:proof_rosenthal_WGE}. Similarly to \Cref{sec:geom-v-ergod}, we provide the counterpart of \Cref{theo:rosenthal_WGE} under the condition $\Nnorm[1/(2p), V]{f} < \infty$. Since its proof follows the same line as \Cref{theo:rosenthal_WGE}, it is omitted.

\begin{theorem}
\label{theo:rosenthal_WGE_polynom}
Assume  \Cref{ass:cost_fun} and \Cref{assum:wasserstein-convergence}, and let $p \geq 2$. Then for any measurable function $f: \Zset \rightarrow \rset$ with $\Nnorm[1/(2p), V]{f} < \infty$, and $n \geq 3$, it holds that
\begin{multline*}
\PE_{\pi}^{1/p}\bigl[\bigl| \Sstat_n \bigr|^{p} \bigr] \leq \bConst{\sf{Rm}, 1}\sqrt{2}p^{1/2}n^{1/2}\sigma_{\pi}(f) +
4\ConstD_{\ref{lem: rosenthal_WGE_for_2^s}, 1} p^{2}n^{1/4}\tmix^{3/4} \varsigma \{\varkappa_{\cost} \pi(V)\}^{2/p}\log_2(2p)\Nnorm[1/(2p), V]{f} \\ + 4\ConstD_{\ref{lem: rosenthal_WGE_for_2^s}, 2} p^2 \tmix \varsigma n^{1/p} \{\varkappa_{\cost} \pi(V)\}^{1/p}\log_2(2p)  \Nnorm[1/(2p), V]{f}\eqsp.
\end{multline*}
Moreover, for any initial probability $\xi$ on $(\Zset, \Zsigma)$,
\begin{equation*}
\PE_{\xi}^{1/p}\bigl[\bigl| \Sstat_n \bigr|^{p}\bigr]  \leq \PE_{\pi}^{1/p}\bigl[\bigl| \Sstat_n \bigr|^{p}\bigr]  + (32/15)p \varsigma \tmix \varkappa_{\cost}^{1/(2p)}(\{\pi(V)\}^{1/(2p)} + \{\xi(V)\}^{1/(2p)})\Nnorm[1/(2p), V]{f}\eqsp.
\end{equation*}
\end{theorem}


\section{Literature review}
\label{sec:discussion_litt}
Concentration inequalities for additive functionals of Markov chains have been studied using a wealth of different techniques. Making a complete survey of existing results is out of the scope of this paper, and we focus here only on the main research directions and techniques devoted to them.
\par 
\textbf{Concentration inequalities under geometric ergodicity conditions.} A popular research direction, developed in particular in \cite{clemenccon2001moment,bertail2010sharp,adamczak2008tail,adamczak2015exponential,bertail2018new,lemanczyk2020general}, is to use regenerative decompositions to obtain moment bounds and Bernstein inequalities under geometric ergodicity assumptions. This setting covers \Cref{ass:UGE} and \Cref{ass:VGE} but not \Cref{sec:WGE}. These techniques are based on the Nummelin splitting construction (see \cite{athreya1978new} and \cite{Nummelin1978AST}). The closest counterparts of our results are those obtained in \cite{adamczak2015exponential} and \cite{lemanczyk2020general}. \cite[Theorem~1]{adamczak2015exponential} provides a Bernstein-type inequality for $V$-geometrically ergodic, strongly aperiodic Markov kernels (see, e.g., \cite[Definition 9.3.5]{douc:moulines:priouret:soulier:2018} for the definition) and unbounded functions, making it a direct counterpart of \Cref{theo:rosenthal_VGE}. Similar to the setting of \Cref{theo:rosenthal_VGE}, the result of \cite[Theorem~1]{adamczak2015exponential} applies to functions $f$ satisfying $\Vnorm[\log^{\beta}{V}]{f} < \infty$ for some $\beta \geq 0$. Compared to \cite[Theorem~1]{adamczak2015exponential}, our results do not require strong aperiodicity of the chain, while still retaining the exact Gaussian leading term. Moreover, our bounds are explicit and homogeneous with respect to $\tmix$.
The strong aperiodicity requirement was previously relaxed in \cite[Theorem~1]{lemanczyk2020general}, but that result applies only to bounded functions and does not provide an explicit expression for the constants, in particular with respect to $\tmix$.
 The case of regenerative positive recurrent Markov chains and unbounded functions was considered in \cite{bertail2018new}, but with a variance proxy (an upper bound for the exact asymptotic variance $\sigma_{\pi}^2(f)$) as the leading term, and without explicit dependence on the mixing time.
\par 
Alternative set of techniques, initially developed for weakly dependent sequences, is based on the  cumulant method and the  Leonov-Shiryaev formula \cite{leonov:sirjaev:1959}. Corresponding results for weakly dependent sequences can be found in \cite{saulis:statulevicius:1991,doukhan:louhichi:1999, doukhan2007probability}. This technique has been applied to the Markov kernels under both \Cref{ass:VGE} or \Cref{assum:wasserstein-convergence} in \cite{durmus2024probability}. In contrast to \cite{durmus2024probability}, our bounds rely on the asymptotic variance $\sigma_{\pi}^2(f)$, not $\PVar_{\pi}(\Sstat_n)$, as a leading term, and imply tighter dependence in $p$. Precisely, the right-hand side in \Cref{theo:rosenthal_VGE_polynom} is of order $p^2\log{p}$, whereas it is at least of order $p^3$ in the right-hand side of the Rosenthal-type bounds established in \cite{durmus2024probability}. Furthermore, the remainder terms of the Rosenthal inequality from \cite{durmus2024probability} scale with $(\PVar_{\pi}[\Sstat_n])^{-1}$. Thus, to keep precise scaling with $\tmix$, one has to establish lower bounds of the form $\PVar_{\pi}(\Sstat_n) \gtrsim n \tmix$, which, to our knowledge, have never been established under general conditions such as \Cref{ass:VGE} or \Cref{assum:wasserstein-convergence} without additional assumptions. 
\par 
\textbf{Concentration inequalities under mixing conditions.} Rosenthal-type and Bernstein-type inequalities can be obtained for general dependent sequences under various mixing conditions; see \cite{rio2017asymptotic} for an overview. Apart from the results using the cumulant methods discussed above, \cite{merlevede2011bernstein} established Bernstein-type inequalities for $\tau$-mixing processes, implying such inequalities for additive functionals of atomic chains or for Markov kernels satisfying \Cref{assum:wasserstein-convergence} with $c^{1/2} = d$ and $V \equiv 1$. The variance term identified in \cite{merlevede2011bernstein} is an upper bound on the asymptotic variance $\sigma^2_{\pi}(f)$. The same type of bound, with a variance proxy as a leading term, can also be deduced under \Cref{ass:VGE} using the result of \cite[Theorem 6.2]{rio2017asymptotic}. Moreover, the constant reflecting dependence on the mixing time of the chain in \cite{merlevede2011bernstein} is not explicit. Tracking this dependence naively, we obtain Rosenthal-type bounds that are not homogeneous with respect to $n$ and $\tmix$. In \cite[Theorem 2.1]{dedecker2024deviation}, the authors obtain deviation bounds for $\theta$-mixing sequences (see \cite[Definition 1.1]{dedecker2024deviation}) with two regimes: an exponential one with the coefficient scaling with $\sigma_{\pi}^2(f)$, and a polynomial one with implicit dependence on the mixing time. While the $\theta$-mixing setting covers both \Cref{ass:VGE} and \Cref{assum:wasserstein-convergence}, the latter result is weaker than the exponential inequalities provided by \Cref{theo:rosenthal_VGE} and \Cref{theo:rosenthal_WGE} in this paper.
\par 
\textbf{Concentration inequalities under Wasserstein contraction.} Some works considered Markov kernels satisfying \Cref{assum:wasserstein-convergence} with $c^{1/2}=d$ and $V \equiv 1$ and derived concentration inequalities for $(S_n)_{n\in\nset}$. This particular form of \Cref{assum:wasserstein-convergence} is referred to as the Positive Ricci Curvature (PCR) condition in  \cite{joulin:ollivier:2010}. Under PCR and additional technical conditions, the authors of \cite{joulin:ollivier:2010} show that sub-Gaussian concentration holds for Lipschitz functions $f$, with a variance proxy (sub-Gaussian parameter) as the leading term. One-step contraction conditions in $\mathrm{L}^{p}(\pi)$, essentially similar to the PCR condition, were also considered in \cite{dedecker2019deviation,dedecker2015deviation,doukhan2024deviation}. In these papers, the authors derive McDiarmid- and Bernstein-type bounds for separately Lipschitz functions, a more general setting compared to that of additive functionals. To our knowledge, they only obtain variance proxies in their bounds instead of the asymptotic variance $\sigma_{\pi}^2(f)$. Moreover, the higher-order terms are not explicit with respect to $\tmix$. Tracking this dependence naively, we obtain Rosenthal-type bounds that are not homogeneous with respect to $n$ and $\tmix$.
\par 
\textbf{Bernstein-type inequalities under a spectral gap condition.} Building upon \cite{lezaud:1998}, the analyses in \cite{paulin2015concentration,jiang2018bernstein} lead to Bernstein-type inequalities for bounded functions under the assumption that $\MKQ$ admits a spectral gap in $\ltwo(\pi)$. However, as shown in \cite[Theorem 1.4]{kontoyiannis2012geometric}, the existence of a spectral gap in $\ltwo(\pi)$ does not necessarily follow from geometric ergodicity for non-reversible Markov kernels; thus, this assumption is not directly comparable with \Cref{ass:VGE}. Furthermore, \cite{paulin2015concentration} establishes a Bernstein-type inequality with the exact asymptotic variance $\sigma_{\pi}^2(f)$ of $(S_n)_{n\in\nset}$ only if $\MKQ$ is reversible, while \cite{jiang2018bernstein} provides a version of the Bernstein inequality using a variance proxy.
\par 
\textbf{Hoeffding-type and McDiarmid-type inequalities.} Concentration inequalities other than Rosenthal- and Bernstein-type ones have also been actively developed. Hoeffding-type bounds were established in \cite{glynn2002hoeffding} under a uniform minorization condition equivalent to \Cref{ass:UGE}. The proof technique of \cite{glynn2002hoeffding}, as well as our approach, is based on properties of solutions to the Poisson equation (see \eqref{eq:Poisson} below) and on a martingale decomposition. However, the resulting bounds involve the infinity norm of the solution to the Poisson equation, which implies that fluctuations of $\Sstat_n$ are of order $n^{1/2}\tmix$ instead of $n^{1/2}\tmix^{1/2}$, as predicted by the CLT. The authors of \cite{kontoyiannis2005relative} also develop Hoeffding-type bounds by combining information-theoretic ideas with techniques from optimization. In particular, \cite[Theorem~4]{kontoyiannis2005relative} implies a high-probability bound for $\Sstat_n$ of the same order $n^{1/2}\tmix$. \cite[Section~2]{paulin2015concentration} derives optimal (in terms of $\tmix$ and $n$) Hoeffding-type bounds using a coupling argument based on the Marton coupling \cite{marton1996measure}. Let us also mention \cite{miasojedow2014hoeffding,fan2021hoeffding}, which establish Hoeffding-type bounds using spectral theory in $\ltwo(\pi)$, involving the spectral gap of $\MKQ$ in $\ltwo(\pi)$. These bounds provide the optimal scaling with respect to $\tmix$ and $n$. Finally, McDiarmid-type inequalities under assumptions equivalent to \Cref{ass:VGE} were established in \cite{dedecker2015subgaussian,havet:mcdiarmid:2020}.

\section{Applications}
\label{sec:apps}
\subsection{Linear Stochastic Approximation} In this section we apply our results to the linear stochastic approximation (LSA) iterates for solving the linear system $\bA \thetas = \barb$. In the context of LSA, neither $\bA$ nor $\barb$ are known, instead the learner relies on the sequence of observations $\{( \funcA{Z_k}, \funcb{Z_k})\}_{k \in \nset}$, where $\Am: \msz \to \rset^{d \times d}$, $\bm: \msz \to \rset^d$ are measurable functions, and $(Z_k)_{k \in \nset}$ is an i.i.d. sequence of random variables. With a fixed step size $\gamma > 0$, the LSA algorithm constructs the sequence $(\theta_k)_{k \in \nset}$ defined recurrently by
\begin{equation}
\label{eq:lsa}
\textstyle \theta_{k} = \theta_{k-1} - \gamma \{ \funcA{Z_k} \theta_{k-1} - \funcb{Z_k} \} \eqsp,~~ k \geq 1 \eqsp,
\end{equation}
with the deterministic initialization $\theta_0 \in \rset^{d}$. It is well-known (see e.g. \cite{durmus2021tight}), that under appropriate assumptions the Markov chain $( \theta_k )_{k \in \nset}$ is geometrically ergodic in weighted Wasserstein semi-metric. A popular estimate of $\thetas$ based on \eqref{eq:lsa} is the Polyak-Ruppert averaged estimator \cite{ruppert1988efficient,polyak1992acceleration}, which writes as
\begin{equation}
\label{eq:lsa-pr}
\textstyle{
\prtheta_{n} = n^{-1} \sum_{k=0}^{n-1} \theta_k 
}\eqsp.
\end{equation}
The finite-time performance of Polyak-Ruppert averaged LSA (LSA-PR) has attracted much attention recently; see \cite{durmus2022finite,mou2020linear,mou2021optimal,huo2023bias}. 

Our goal is to study the properties of the Markov chain $(\theta_k)_{k \in \nset}$ and obtain moment bounds on $\prtheta_{n} - \thetas$. To this end, we consider assumptions based on the equivalent formulation of \eqref{eq:lsa}:
\begin{equation}
\label{eq:clever_lsa_step}
\theta_{k} - \thetas = (\Id - \gamma \funcA{Z_k})(\theta_{k-1} - \thetas) - \gamma \funcnoise{\State_{k}}\eqsp, \text{ where }
\end{equation}
\begin{equation}
\label{eq:def_center_version_and_noise}
\textstyle
\funcnoise{z} =  \zmfuncA{z} \thetas - \zmfuncb{z}\eqsp, \quad \zmfuncA{z}  = \funcA{z} - \bA \eqsp, \quad   \zmfuncb{z} = \funcb{z} - \barb \eqsp \eqsp.
\end{equation}
With the introduced notations, we state our first assumption.
\begin{assumptionLSA}
\label{assum:noise-level}
$(Z_k)_{k \in \nset}$ is a sequence of i.i.d. random variables defined on a probability space $(\Omega,\mcf,\PP)$ with distribution $\nu$, such that 
$\int_{\Zset}\funcA{z}\nu(\rmd z) = \bA$ and $\int_{\Zset}\funcb{z}\nu(\rmd z) = \barb$. Moreover, $\supconsteps = \sup_{z \in \msz}\normop{\funcnoise{z}} < \infty$.
\end{assumptionLSA} 
Our next assumption concerns the mappings $z \mapsto \zmfuncAw(z)$ and the system matrix $\bA$:
\begin{assumptionLSA}
\label{assum:A-b}
$\bConst{A} = \sup_{z \in \msz} \normop{\funcA{z}} \vee \sup_{z \in \msz} \normop{\zmfuncA{z}} < \infty$ and  the matrix $-\bA$ is Hurwitz.
\end{assumptionLSA}
Note that \Cref{assum:noise-level} and \Cref{assum:A-b} are common assumptions in the stochastic approximation literature, see e.g. \cite{mou2020linear,durmus2022finite}. Part of \Cref{assum:noise-level} concerning the almost sure boundedness of $\funcnoise{Z_1}$ can be relaxed to an appropriate moment-type bound, yet for clarity we prefer to keep this assumption. The condition that $-\bA$ is Hurwitz in \Cref{assum:A-b} implies that the linear system $\bA \theta = \barb$ has a unique solution $\thetas$.  \Cref{{assum:A-b}} also allows to ensure that $\normop{\Id - \gamma \bA}[\mathbf{P}] < 1$ for an appropriate operator norm. We state the corresponding proposition below. We denote here by $\norm{x}[\mathbf{S}] = x^{\top} \mathbf{S} x$ and for any matrix $\mathbf{B}$, we denote by $\normop{\mathbf{B}}[\mathbf{S}]= \sup_{x\,: \, \norm{x}[\mathbf{S}] \leq 1} \norm{\mathbf{B}x}[\mathbf{S}]$.
\begin{proposition}[\protect{\cite[Proposition 1]{durmus2021tight}}]
\label{fact:Hurwitzstability}
Let $-\bA$ be a Hurwitz matrix. Then there exists a unique symmetric positive definite matrix $\mathbf{P}$ satisfying the equation
$\bA^\top \mathbf{P} + \mathbf{P} \bA =  \Id$. In addition, setting
\begin{equation}
\label{eq: kappa_def}
a = \normop{\mathbf{P}}^{-1}/2\eqsp, \quad
\text{and} \quad \gamma_\infty = (1/2) \normop{\bA}[\mathbf{P}]^{-2} \normop{\mathbf{P}}^{-1} \wedge \normop{\mathbf{P}} \eqsp,
\end{equation}
it holds for any $\gamma \in [0, \gamma_{\infty}]$ that $\normop{\Id - \gamma \bA}[\mathbf{P}]^2 \leq 1 - a \gamma$, and $\gamma a \leq 1/2$.
\end{proposition}
Define for $q \geq 2$ the constants
\begin{align}
\label{eq:def_qcond_b_Q}
&\textstyle{\qcond = \lambda_{\sf max}( \mathbf{P} )/\lambda_{\sf min}( \mathbf{P} )  \eqsp, \quad  b_{\mathbf{P}} = 2 \sqrt{\qcond} \bConst{A} \eqsp,} \quad \\
\label{eq:def_alpha_p_infty}
&\textstyle{\gamma_{q,\infty} = \gamma_{\infty} \wedge \smallAconst/q \eqsp, \quad \smallAconst = a/\{2b_\mathbf{P}^2\}}\eqsp, \quad \ConstD = (2 \qcond)^{1/2} a^{-1}(1 + 4 \qcond^{1/2} \bConst{A}  a^{-1})\eqsp.
\end{align}
The sequence $(\theta_{k})_{k \in \nset}$ defined by \eqref{eq:lsa} is a Markov chain associated with the Markov kernel $\MK_{\gamma}$, defined by
\begin{equation}
\label{eq:MK_alpha_def}
\textstyle{
\MK_{\gamma} f(\theta) = \int_{\msz}f \left( \{ \Id - \gamma \funcA{z} \} \theta + \gamma \funcb{z} \right) \nu(\rmd z)
}\eqsp,
\end{equation}
where $f: \rset^{d} \to \rset$ is a bounded measurable function, and $\theta \in \rset^{d}$. Assumptions \Cref{assum:noise-level} and \Cref{assum:A-b} are sufficient to check that $(\theta_{k})_{k \in \nset}$ is geometrically ergodic in weighted Wasserstein semi-metric with a particular cost function $\cost$. For $p \geq 2$, define the drift function
\begin{equation}
\label{eq:drift_function}
V_p(\theta) = \rme + \norm{\theta - \thetas}^{p}\eqsp.
\end{equation} 
We can formulate the following result. 
\begin{proposition}
\label{th:explicit_constants_under_A1_more_specific}
Assume \Cref{assum:noise-level} and \Cref{assum:A-b}. Let $p \geq 2$. Then, for $\gamma \in (0,\gamma_{p(1+\log d),\infty}]$, the Markov kernel $\MK_{\gamma}$ defined in \eqref{eq:MK_alpha_def} has a unique invariant distribution $\pi_{\gamma}$ and satisfies \Cref{ass:cost_fun} and \Cref{assum:wasserstein-convergence} with the drift function $V_p$ defined in \eqref{eq:drift_function}, and
\begin{equation*}
\begin{split}
\cost(\theta,\theta') &= 1 \wedge \norm{\theta - \theta'}^2\eqsp, \quad  \varsigma \lesssim 1 \eqsp, \quad \varkappa_{c}^{1/(2p)} \lesssim 1 \eqsp, \quad 
\varrho  \leq 7/8 \eqsp, \quad \tmix  \lesssim p/\gamma\eqsp,
\end{split}
\end{equation*}
where $\lesssim$ stands for inequality up to a constant, not depending upon $p$, $\gamma$, and $n$
\end{proposition}

We provide the proof of \Cref{th:explicit_constants_under_A1_more_specific} along with a statement with explicit constants  in Subsection~2 of \Cref{sec:supplement}. \Cref{th:explicit_constants_under_A1_more_specific} guarantees, that the following asymptotic covariance matrix is well-defined:
\begin{equation}
\label{eq:ass_cov_matrix}
\Sigma_{\pi_{\gamma}} = \PE_{\pi_{\gamma}}[(\theta_0 - \theta^*)(\theta_0 - \theta^*)^{T}] + 2\sum_{k=1}^{\infty}\PE_{\pi_{\gamma}}[(\theta_k - \theta^*)(\theta_0 - \theta^*)^{T}]\eqsp.
\end{equation}
We further show, that the matrix $\Sigma_{\pi_{\gamma}}$ is close to the matrix 
\begin{equation}
\label{eq:sigma_infty}
\Sigma_{\infty} =  \bA^{-1}\Sigma_{\varepsilon}(\bA^{-1})^{\top}\eqsp, \quad \text{ where } \Sigma_{\varepsilon} := \mathbb{E}[\varepsilon(z)\varepsilon(z)^\top]\eqsp.
\end{equation}
The importance of $\Sigma_{\infty}$ is due to the fact that it is known to be optimal from information-theoretical point of view for SA algorithms with decreasing step size, see \cite{polyak1992acceleration,fort2015central}. Closeness between $\Sigma_{\pi_{\gamma}}$ and $\Sigma_{\infty}$ can be quantified as follows: 
\begin{lemma}
\label{lem:lsa_sigmas_eq}
Assume \Cref{assum:noise-level} and \Cref{assum:A-b}. Then for any step size 
\[
\gamma \in (0,\gamma_{2,\infty} \wedge \sigma_{\min}(I\otimes\bA + \bA\otimes I)/\bConst{A}^2]\eqsp,
\]
where $\gamma_{2,\infty}$ is defined in \eqref{eq:def_alpha_p_infty}, it holds that
\begin{equation}
\Sigma_{\pi_{\gamma}} = \Sigma_{\infty} + \gamma \mathbf{B} \eqsp, 
\end{equation}
where $\mathbf{B} \in \rset^{d \times d}$ is a matrix, such that $\norm{\mathbf{B}} \leq C$ for some constant $C$ not depending upon $\gamma$.
\end{lemma}
The proof of \Cref{lem:lsa_sigmas_eq} is given  in Subsection~2 of \Cref{sec:supplement}. Combining \Cref{th:explicit_constants_under_A1_more_specific} and \Cref{lem:lsa_sigmas_eq}, we can conclude:
\begin{theorem}
\label{prop:W_ergodic_LSA}
Assume \Cref{assum:noise-level}, \Cref{assum:A-b}, and let $p \geq 2$. Then for any step size 
\[
\gamma \in (0, \gamma_{p(1+\log{d}),\infty} \wedge \sigma_{\min}(I\otimes\bA + \bA\otimes I)/\bConst{A}^2]\eqsp,
\]
where $\gamma_{p(1+\log{d}),\infty}$ is defined in \eqref{eq:def_alpha_p_infty},  for any $\theta_0 \in \rset^{d}$ and $n \in\nset$, it holds that 
\begin{multline*}
\PE^{2/p}\bigl[\|\prtheta_{n} - \thetas\|^{p/2}\bigr]  \\ \lesssim \frac{p^{1/2} \sqrt{\trace{\Sigma_{\infty}}}}{n^{1/2}} + \sqrt{\frac{p\gamma}{n}}+ \frac{p^{11/4} \log_2(p)}{(\gamma n)^{3/4}} + \frac{p^3\log_2(2p)}{\gamma n^{1-1/p}} + \frac{(\norm{\theta_0 - \thetas} + 1) p^2}{\gamma n} \eqsp,
\end{multline*}
where $\lesssim$ stands for inequality up to a constant, not depending upon $p$, $\gamma$, and $n$.
\end{theorem}
\begin{proof}
To apply \Cref{theo:rosenthal_WGE_polynom}, we need to check that $f(\theta) = \theta - \thetas \in \mathcal{L}_{1/p, V_p}$, where $V_p$ is defined in \eqref{eq:drift_function}. Simple algebraic manipulations yield
\begin{equation}
    \label{eq:bound_v_norm_lsa}
    \sup_{\theta \in \rset^{d}}\frac{f(\theta)}{(V_p(\theta))^{1/p}} \leq 1 \eqsp,
\end{equation}
\begin{equation}
    \label{eq:bound_lip_norm_lsa}
    \underset{\theta}{\sup}\frac{f(\theta) - f(\theta')}{c^{1/2}(\theta, \theta')(\bar{V_p}(\theta, \theta')^{1/p}} \leq
    \begin{cases}
    \underset{\theta}{\sup}\frac{\norm{\theta-\theta^*}+\norm{\theta'-\theta^*}}{((\norm{\theta-\theta^*}^p+\norm{\theta'-\theta^*}^p)/2)^{1/p}} \leq 2, &\text{if $\norm{\theta - \theta'} \geq 1$} \\
    \underset{\theta}{\sup}\frac{\norm{\theta-\theta'}}{\norm{\theta-\theta'}\rme^{1/p}} \leq 1\eqsp, & \text{if $\norm{\theta - \theta'} \leq 1$}
    \end{cases}
\end{equation}
Hence, we get that $\Nnorm[1/p, V_p]{f} \leq 2$.
From \Cref{th:explicit_constants_under_A1_more_specific} it follows that, for any $\theta_0 \in \rset^{d}$, 
\[
\textstyle{
\pi_{\gamma}(V_p) = \lim_{n \rightarrow \infty} \PE_{\theta_0}[e + \norm{\theta_n - \theta^*}]  \lesssim 
(\gamma  p)^{p/2}
}\eqsp.
\]
The rest of the proof follows from \Cref{th:explicit_constants_under_A1_more_specific} , \Cref{theo:rosenthal_WGE_polynom} and \Cref{lem:lsa_sigmas_eq}.
\end{proof}
The result of \Cref{prop:W_ergodic_LSA} can be viewed as a counterpart of \cite[Theorem~3]{mou2020linear} and \cite[Theorem~1]{durmus2022finite}. As compared to \cite[Theorem~3]{mou2020linear}, the result of \Cref{prop:W_ergodic_LSA} yields slightly worse decay rates of the non-leading (w.r.t. $n$) terms, but allows to obtain the same leading term, scaling as $\sqrt{\trace{\Sigma_{\infty}}}$. At the same time, our proof of \Cref{prop:W_ergodic_LSA} simplifies the argument of \cite[Theorem~3]{mou2020linear} and relies only on bounds on the mixing time of $\{\theta_n\}_{n \in \nset}$, obtained in \Cref{th:explicit_constants_under_A1_more_specific}.

\subsection{Diffusion-based MCMC methods}
We consider applications of our results to nonasymptotic analysis of MCMC algorithms. Let $\pi$ be a probability on $(\rset^{d},\B(\rset^d))$ defined for any $\msa \in \B(\rset^d)$ as
\[
\pi(\msa) = \int_{\msa}\rme^{-U(x)}\,\rmd x / Z
\eqsp, \quad Z:= \int_{\rset^{d}}\rme^{-U(x)}\,\rmd x\eqsp, 
\]
where $U: \rset^{d} \to \rset$ is a measurable function such that $\int_{\rset^{d}}\rme^{-U(x)}\,\rmd x < \infty$. We aim to approximate the integral
$\pi(f) = \int_{\rset^{d}}f(x)\pi(\rmd x)$
for a function $f \in \ltwo(\pi)$. When sampling i.i.d. observations from $\pi$ is not feasible, one  possible solution to this problem is to apply Markov Chain Monte Carlo (MCMC) methods. In this context, we attempt to construct a Markov chain $(\theta_k)_{k \in \nset}$ which admits $\pi$ as unique invariant distribution $\pi$, and use the estimate
$\pi_n(f) = n^{-1}\sum_{\ell = 0}^{n-1}f(\theta_{\ell})$.
Many popular MCMC algorithms are inspired by the overdamped Langevin SDE (stochastic differential equation) given by
\begin{equation}
\label{eq:langevin_sde}
\rmd Y_t= - \nabla U(Y_t)\,\rmd t + \sqrt{2} \rmd W_t\eqsp,
\end{equation}
where $(W_t)_{t \geq 0}$ is a $d$-dimensional Brownian motion. Under suitable conditions for $U$, \eqref{eq:langevin_sde} has a unique strong solution for every $Y_0 = x \in \rset^{d}$ and \eqref{eq:langevin_sde} defines a Markov semigroup  which is $\pi$-reversible; see e.g., \cite{roberts:tweedie-Langevin:1996}. 
In this section, we attempt to provide a finite-time performance guarantee for MCMC samplers based on discretizations of the SDE \eqref{eq:langevin_sde}.
\par 
An example of such a discretization is the Unadjusted Langevin Algorithm (ULA), see \cite{roberts:tweedie-Langevin:1996}. It arises as an Euler discretization scheme associated with the Langevin SDE \eqref{eq:langevin_sde}:
\begin{equation}
\label{eq:langevin_chain}
\textstyle{
\theta_{k+1} = \theta_{k} - \gamma \nabla U(\theta_k) +\sqrt{2\gamma} Z_{k+1}
}\eqsp,
\end{equation}
where $\gamma > 0$ is the discretization step and $(Z_k)_{k \in \nset}$ is an i.i.d. sequence of $d-$dimensional standard Gaussian vectors; see \cite{roberts:tweedie-Langevin:1996}. The nonasymptotic properties of the ULA algorithm were considered in many recent works; see among many others, e.g., \cite{durmus:moulines:2015,dalalyan2017theoretical}.
\par
Another example of diffusion-based MCMC is the Metropolis Adjusted Langevin Algorithm (MALA), which can be obtained from \eqref{eq:langevin_chain} by integrating an additional Metropolis-Hastings correction step. To analyze the ULA and MALA algorithms in the same theoretical framework, we consider a family of Markov kernels $\{\MKQ_{\gamma}, \gamma \in (0;\bar{\gamma}]\}$ parameterized by a scalar variable $\gamma$, which corresponds to the step size in the discretization \eqref{eq:langevin_chain}. More precisely, the corresponding Markov kernels for $\theta \in \rset^{d}$ and $\msa \in \B(\rset^{d})$ are respectively given by
\begin{align*}
\textstyle{
\MKQ_{\gamma}^{\operatorname{(ULA)}}(\theta,\msa)} &= \textstyle{\int_{\rset^{d}}\indi{\msa}\bigl(\varphi(\theta,z)\bigr) \qbf_d(z) \rmd z\,, \quad \varphi(\theta,z) = \textstyle{\theta - \gamma \nabla U(\theta) + \sqrt{2\gamma} z}}\eqsp,  \\
\textstyle{\MKQ_{\gamma}^{\operatorname{(MALA)}}(\theta,\msa)} &= \textstyle{\int_{\rset^{d}}\indi{\msa}(\varphi(\theta,z)) \bigl(1 \wedge \rme^{-\tau_{\gamma}}\bigr) \qbf_d(z) \rmd z + 
\indi{\msa}(\theta) \bigl(1 - \int_{\rset^{d}} \{1 \wedge \rme^{-\tau_{\gamma}}\} \qbf_d(z) \rmd z \bigr)}\eqsp,
\end{align*}
where we have set
\begin{align*}
\textstyle{\tau_{\gamma}(\theta,z)} = \textstyle{U(\varphi(\theta,z)) - U(\theta) + (1/2)\bigl(\norm{z - (\gamma/2)^{1/2}\{\nabla U(\varphi(\theta,z)) + \nabla U(\theta)\}}^2 - \norm{z}^2\bigr)}\eqsp,
\end{align*}
and $\qbf_d(z)$ is a density of a $d$-dimensional standard Gaussian random variable. Assume that the Markov kernels $\MKQ_{\gamma}^{\operatorname{(ULA)}}$ or $\MKQ_{\gamma}^{\operatorname{(MALA)}}$ satisfy the following condition:
\begin{assumptionMC}
\label{as:MC_mixing}
There exists $\bar{\gamma} > 0$ and a function $V: \rset^d \to [\rme;+\infty)$, such that that for any $\gamma \in (0;\bar{\gamma}]$, the Markov kernel $\MKQ_{\gamma} \in \{\MKQ_{\gamma}^{\operatorname{(ULA)}}, \MKQ_{\gamma}^{\operatorname{(MALA)}}\}$ satisfies \Cref{ass:VGE} with mixing time $\tmix_{\gamma} \leq C_{\MKQ}\gamma^{-1}$ and invariant distribution $\pi_{\gamma}$.
\end{assumptionMC}
Under appropriate conditions on $U$, \Cref{as:MC_mixing} is satisfied  for the considered Markov kernels. In particular, it is checked for the ULA and MALA in \cite{durmus2022geometric} under rather weak conditions for the potential $U(\theta)$ with the drift function $V(\theta) = \exp\{\bar{\eta}\norm{\theta}^2\}$ with a  kernel-specific constant $\bar{\eta} > 0$. Note that in case of ULA, the invariant distribution $\pi_{\gamma} \neq \pi$, while MALA keeps the correct invariant distribution. However under appropriate conditions, quantitative bounds in various metrics between $\pi$ and $\pi_{\gamma}$ can be established; see \cite{durmus2024asymptotic} and the references therein.

Under \Cref{as:MC_mixing} we can now state the following result, with $\bar{f}(\theta) := f(\theta) - \pi_{\gamma}(f)$:
\begin{theorem}
\label{th:concentration_mcmc}
Assume  \Cref{as:MC_mixing}. Then, for any $\gamma \in (0;\bar{\gamma}]$, $\beta \geq 0$, and any measurable function $f: \rset^{d} \to \rset$, $\Vnorm[\log^{\beta}{V}]{f} < \infty$, $p\geq 2$, and $n \geq 3$, it holds that 
\begin{multline}
\label{eq:main_rosenthal_MCMC}
\PE_{\pi_{\gamma}}^{1/p}\bigl[ \bigl|n^{-1}\sum_{\ell=0}^{n-1}\bar{f}(\theta_{\ell})\bigr|^{p}\bigr]  \lesssim  \frac{p^{1/2} \sigma_{\pi_{\gamma}}(f)}{n^{1/2}} + \frac{(2\beta)^{\beta} \{\varkappa \pi_{\gamma}(V)\}^{2/p} p^{2+\beta} \log_2(2p) \Vnorm[\log^{\beta}{V}]{f}}{\rme^{\beta}(\gamma n)^{3/4}}  \\
+ \frac{(2\beta/\rme)^{\beta} \{\varkappa \pi_{\gamma}(V)\}^{1/p} p^{2+\beta} \log_2(2p) \log^{\beta+1}(n) \Vnorm[\log^{\beta}{V}]{f}}{\gamma n}\eqsp,
\end{multline}
where $\sigma_{\pi_{\gamma}}(f)$ is defined in \eqref{eq:asympt_var_def} replacing $\pi$ by $\pi_\gamma$ and $\lesssim$ stands for inequality up to a constant, not depending upon $p$, $\gamma$, $n$, $\beta$ and $V$. 
\end{theorem}
The statement of \Cref{th:concentration_mcmc} directly follows from \Cref{theo:rosenthal_VGE} and the bound on mixing time from \Cref{as:MC_mixing}. Note that the moment bound above is natural in a sense that it highlights the dependence of $\PE_{\pi}^{1/p}\bigl[\bigl| \Sstat_n / n \bigr|^{p}\bigr]$ on the \emph{physical integration time} of the diffusion, that is,   the horizon  $\gamma n =: T$ reached after $n$ iterations of \eqref{eq:langevin_chain} with step size $\gamma$. Note that the remainder terms of the right-hand side of \eqref{eq:main_rosenthal_MCMC} scales homogeneously with respect to $\gamma n$, which resembles the corresponding continous-time results \cite{trottner2023concentration}.


\section{Proofs}
\label{sec:proofs}

\subsection{Outline of the proof technique} We start this section with an outline of the proof based on dyadic decomposition. We provide this proof outline under \Cref{ass:UGE}, and, notably, proofs of analogous results under \Cref{ass:VGE} or \Cref{assum:wasserstein-convergence} follow the same pattern, with small additional technicalities related to the control of solution to the Poisson equation associated with $\MKQ$ (see \eqref{eq:Poisson} below) in appropriate norms. In this subsection, for sequences $\{a_n\}_{n \in \nset}$ and $\{b_n\}_{n \in \nset}$ we write $a_n \lesssim b_n$, if there is an absolute constant $c > 0$, such that $a_n \leq c b_n$ for any $n \in \nset$.
\par 
Using the blocking technique, it is relatively easy to obtain a version of \eqref{eq:ros_independent_pinelis} with a leading term given by a variance proxy:
\begin{equation}
\label{eq:crude_rosenthal_uge}
\textstyle
\PE_{\pi}^{1/p}\bigl[\bigr| \Sstat_n \bigr|^p\bigr] \leq \ConstD_{\ref{lem:auxiliary_rosenthal},1} n^{1/2}\tmix^{1/2}p^{1/2}\infnorm{f} + \ConstD_{\ref{lem:auxiliary_rosenthal},2} \tmix p \infnorm{f}\eqsp.
\end{equation}
Proof is provided in \Cref{lem:auxiliary_rosenthal}. This blocking technique is not applicable to get the exact leading term $\sigma_{\pi}(f)$. Instead, we proceed with a proof strategy based on a martingale representation of $\Sstat_n$ via the Poisson equation. We repeat the same decomposition with appropriate technical modifications under \Cref{ass:VGE} or \Cref{assum:wasserstein-convergence}. Given \Cref{ass:UGE} and a bounded measurable function $f: \Zset \mapsto \rset$, $\infnorm{f} \leq 1$, we define a function $g: \Zset \mapsto \rset$ as
\begin{equation}
\label{eq:Poisson_equation_definition}
g(z) = \sum_{k=0}^{\infty}\{\MKQ^{k}f(z) - \pi(f)\}\eqsp.
\end{equation}
It is easy to check that $g(z)$ is a solution to the Poisson equation associated with $\MKQ$:
\begin{equation}
\label{eq:Poisson}
g(z) - \MKQ g(z) = f(z) - \pi(f)\eqsp.
\end{equation}
Moreover, as it is shown in \Cref{lem: poisson_sol}, it holds that $\infnorm{g} \lesssim  \tmix \infnorm{f}$. The dependence in $\tmix$ of this upper bound can not be improved. Using the definition of the Poisson solution $g$, we represent $\Sstat_n$ in the following way:
\begin{equation}
\label{eq:martingale_decomp_tv}
\textstyle
\Sstat_n = \sum_{i=0}^{n-1} \barf(Z_i) = \sum_{i=0}^{n-1}\left\{ g(Z_i) - \MKQ g(Z_i) \right\} = \sum_{i=0}^{n-1} \Delta M_i^{(0)} + g(Z_0) - g(Z_n)\eqsp,
\end{equation}
where we set $\Delta M_i^{(0)} = g(Z_{i+1}) - \MKQ g(Z_{i})$. The leading term in the sum \eqref{eq:martingale_decomp_tv} is given by $\sum_{i=0}^{n-1} \Delta M_i^{(0)}$. Indeed, using Minkowski's inequality and \Cref{lem: poisson_sol}, we get
\begin{equation}
\label{eq:M_0_decomp}
\textstyle \PE_{\pi}^{1/p}\bigl[\bigl| \Sstat_n \bigr|^{p}\bigr] \leq \PE_{\pi}^{1/p}\bigl[\bigl| \sum_{i=0}^{n-1}\Delta M_i^{(0)} \bigr|^{p} \bigr] + 2\infnorm{g} \lesssim
\PE_{\pi}^{1/p}\bigl[\bigl|\sum_{i=0}^{n-1} \Delta M_i^{(0)}\bigr|^{p} \bigr] + \tmix \infnorm{f} \eqsp.
\end{equation}
Moreover, the sequence $\{\Delta M_i^{(0)}\}_{i=0}^{n-1}$ is a martingale-increment sequence w.r.t. the filtration $\mcf_{i} = \sigma(Z_j, j \leq i)$. Thus one can proceed with the Pinelis version of Rosenthal inequality for martingales \cite[Theorem 4.1]{pinelis_1994}, which implies
\begin{multline}
\label{eq:rosenthal_pinelis}
\textstyle \PE_{\pi}^{1/p}\bigl[\bigl| \sum_{i=0}^{n-1} \Delta M_i^{(0)} \bigr|^{p} \bigr] \leq \bConst{\sf{Rm}, 1} p^{1/2} \PE_{\pi}^{1/p}\bigl[\bigl|\sum_{i=0}^{n-1}
\CPE[\pi]{ (\Delta M_i^{(0)})^2}{\mathcal{F}_i}\bigr|^{p/2}\bigr] \\
+ \bConst{\sf{Rm}, 2} p \PE_{\pi}^{1/p}\bigl[ \bigl\{\underset{i}{\max}|\Delta M_i^{(0)}| \bigr\}^{p}\bigr]\eqsp. 
\end{multline}
Here $\bConst{\sf{Rm}, 1}$ and $\bConst{\sf{Rm}, 2}$ are the  constants from the martingale Rosenthal inequality. Optimal constants are discussed in \cite[Theorem 4.1]{pinelis_1994}, and are provided in \Cref{sec:constants}. The second term above is controlled by $\infnorm{g}$ and its upper bound is also proportional to $\tmix$:
\[
\textstyle{ 
\PE_{\pi}^{1/p}\bigl[\bigl\{\underset{i}{\max}|\Delta M_i^{(0)}| \bigr\}^{p}\bigr] \lesssim \tmix \infnorm{f}}\eqsp.
\]
The control of this term is elementary under \Cref{ass:UGE}. Under \Cref{ass:VGE} or \Cref{assum:wasserstein-convergence}, the analysis of this term is slightly more involved. Still, it is negligible compared to the first one in \eqref{eq:rosenthal_pinelis}.
\par
Now we proceed with the first term in \eqref{eq:rosenthal_pinelis}. Using the Markov property, it is easy to see that $\CPE[\pi]{ (\Delta M_i^{(0)})^2}{\mcf_i} = g_1(Z_i)$, $\PP_\pi$-a.s.,
where $g_1(z)= Q g^2(z) - \{Qg(z)\}^2$. Furthermore, it is important to highlight that $\pi(g_1) \neq 0$ since
\[
\textstyle{
\pi(g_1) = \pi((g-\MKQ g)(g+ \MKQ g)) \overset{(a)}{=} \pi(\bar{f}(g + \MKQ g)) = \pi(\bar{f}^2)  + 2\sum_{\ell = 1}^{\infty} \pi(\barf \MKQ^{\ell} \barf) = \sigma^2_{\pi}(f)
}\eqsp.
\]
In (a) we used the definition of the Poisson equation \eqref{eq:Poisson}. Using Minkowski's inequality,
\[
\textstyle \PE_{\pi}^{2/p}\bigl[\bigl|\sum_{i=0}^{n-1} g_1(Z_i) \bigr|^{p/2}\bigr] \leq \PE_{\pi}^{2/p}\bigl[\bigl|\sum_{i=0}^{n-1}\{g_1(Z_i) - \pi(g_1) \}\bigr|^{p/2}\bigr] + n \sigma^2_{\pi}(f)\eqsp.
\]
Combining the above results in \eqref{eq:rosenthal_pinelis} and setting $\bar{g}_1(z) = g_1(z) - \pi(g_1)$, we get
\begin{equation}
\label{eq:rosenthal_pinelis_grouped}
\textstyle \PE_{\pi}^{1/p}\bigl[\bigl| \Sstat_n \bigr|^{p}\bigr] \lesssim  p^{1/2} n^{1/2} \sigma_{\pi}(f)
+  p^{1/2} \PE_{\pi}^{1/p}\bigl[\bigl|\sum\nolimits_{i=0}^{n-1} \bar{g}_1(Z_i) \}\bigr|^{p/2}\bigr] + p \tmix \infnorm{f}  \eqsp.
\end{equation}
It remains to bound the term 
\begin{equation}
\label{eq:T_1_term_def}
T_1 = \PE_{\pi}^{1/p}\bigl[\bigl|\sum_{i=0}^{n-1} \bar{g_1}(Z_i)\bigr|^{p/2}\bigr]\eqsp.
\end{equation}
It is important to note that while $0 \leq \pi(g_1) \lesssim \tmix$ (see \eqref{eq:bound_pi_g_k_h_k} below), its infinite norm scales \emph{quadratically} with $\tmix$: $\infnorm{g_1} \lesssim \tmix^2 \infnorm{f}^2$. Thus, despite $T_1$ is a remainder term in $n$, we can not simply bound it using the simple version of the Rosenthal inequality from \Cref{lem:auxiliary_rosenthal} if we aim to get a sharp dependence with respect to $\tmix$. Indeed, applying \Cref{lem:auxiliary_rosenthal} directly yields
\[
T_1^2 \lesssim n^{1/2} p^{1/2} \tmix^{5/2} \infnorm{f}^2 + \tmix^{2} p \infnorm{f}^2\eqsp,
\]
and \eqref{eq:rosenthal_pinelis_grouped} writes as
\begin{equation}
\label{eq:rosenthal_inhomogeneous}
\textstyle \PE_{\pi}^{1/p}\bigl[\bigl| \Sstat_n \bigr|^{p}\bigr] \lesssim n^{1/2} p^{1/2} \sigma_{\pi}(f) + n^{1/4} p^{3/4} \tmix^{5/4} \infnorm{f} + \tmix p \infnorm{f}\eqsp.
\end{equation}
The bound \eqref{eq:rosenthal_inhomogeneous} has unsatisfactory scaling with $\tmix$. Indeed, renormalizing \eqref{eq:rosenthal_inhomogeneous} by $n$, we observe that having $\PE_{\pi}^{1/p}\bigl[\bigl| \Sstat_n / n \bigr|^{p}\bigr] \leq \varepsilon$ requires a number of samples $n = \mathcal{O}(\tmix^{5/3})$, which is not linear w.r.t. $\tmix$. Solving the Poisson equation associated with $g_1$ and then applying a second time the martingale decomposition to $T_1$ leads to the same conclusion. This is again linked with the discrepancy in the control of $\sigma^2_\pi(g_1)$ and $\infnorm{g_1}$.

\paragraph{Dyadic descent} First note that the term $T_1^2$ has the exact form of the initial term, replacing $f$ with $\bar{g}_1$ and $p$ with $p/2$. This hints toward the possibility of employing a dyadic descent. For simplicity, we assume from now on that $p=2^s$ for some $s \in \nset^*$. For  $k \in \{1,\dots,s-1\}$, we define the quantities
\begin{equation}
\label{eq:h_k_g_k_def}
\textstyle{
g_k =  \MKQ g^{2^k} - (\MKQ g)^{2^k} \eqsp, \qquad \bar{g}_k = g_k - \pi({g}_k) \eqsp, \qquad R_{k,s}^2 = \PE_{\pi}^{2^{k-s}}\bigl[\bigl| \sum_{i=0}^{n-1} \bar{g}_k(Z_i) \bigr|^{2^{s-k}}\bigr]}\eqsp.
\end{equation}
Note that the term $R_{1,s}$ exactly coincides with $T_1$. At the same time, it can be easily noticed that the term $R_{s-1,s}^4$ can be straightforwardly bounded, since it is just the variance of a linear statistics. Thus our aim will be to carefully bound $R_{k,s}$ in terms of $R_{k+1,s}$. In the key \Cref{lem:recurrence-R-k}, we derive with exact constants the induction
\begin{equation}
\label{eq:recurrent_equation_main}
R_{k,s} \lesssim R^{1/2}_{k+1,s} + n^{1/4} \tmix^{2^{k-1} - 1/4}  + \tmix^{2^{k-1}}\eqsp.
\end{equation}
Towards this aim, with the function $h_k$ defined for $k \in \{1,\dots,s-1\}$ by
\begin{equation}
    \label{eq:def_h_k}
    h_k = g^{2^k} - (\MKQ g)^{2^k} \eqsp,
\end{equation}
we note that $\pi(h_k)= \pi(g_k)$, which implies
\begin{equation}
    \label{eq:defcomp_bar_g}
\bar{g}_k = \MKQ g^{2^k} - g^{2^k} + g^{2^k} - (\MKQ g)^{2^k} - \pi(g_k)  = \MKQ g^{2^k} - g^{2^k} + h_k - \pi(h_k)\eqsp.
\end{equation}
Using Minkowski's inequality, we get
\begin{equation}
\label{eq:decomposition of R_k}
\textstyle
R_{k,s}^2 \leq \underbrace{\PE_{\pi}^{2^{k-s}}\bigl[\bigl| \sum\nolimits_{i=0}^{n-1} g^{2^k}(Z_i) - \MKQ g^{2^k}(Z_i)\bigr|^{2^{s-k}}\bigr]}_{T_{\text{mart}}} + \underbrace{\PE_{\pi}^{2^{k-s}}\bigl[\bigl| \sum\nolimits_{i=0}^{n-1} h_k(Z_i) - \pi(h_k) \bigr|^{2^{s-k}}\bigr]}_{T_{\text{lin}}}\eqsp.
\end{equation}
 We aim to estimate both summands in the decomposition \eqref{eq:decomposition of R_k} starting with $T_{\text{lin}}$. To this end, we first  establish preliminaries properties on $h_k$. Note that for all $k \in \nset$, we have
\begin{equation}
\label{eq:smart-decomposition-hk}
h_k = (g - \MKQ g)(g + \MKQ g)(g^2 + (\MKQ g)^2)\cdots (g^{2^{k-1}} + (\MKQ g)^{2^{k-1}})\eqsp, \eqsp \infnorm{h_k}  \leq 2^{k-1}\infnorm{g}^{2^k-1} \infnorm{\bar{f}}\eqsp.
\end{equation}
For the last bound we have used that $g - \MKQ g = \bar{f}$. From \Cref{lem: poisson_sol} it follows that $\infnorm{h_k} \lesssim \tmix^{2^k-1}$. Note that considering $h_k$ and the decomposition \eqref{eq:defcomp_bar_g}, we gain a factor $\tmix$ as compared to using the definition \eqref{eq:h_k_g_k_def} since  $\infnorm{g_k} \lesssim \tmix^{2^k}$. The same observation applies to $\pi(g_k)$ since
\begin{equation}
\label{eq:bound_pi_g_k_h_k}
    \pi(g_k) = \pi(h_k) \lesssim \tmix^{2^k-1} \eqsp.
\end{equation}
Applying \eqref{eq:crude_rosenthal_uge} with $p=2^{s-k}$ we get, omitting the terms not depending upon $n$ and $\tmix$, that
\begin{equation}
    \label{eq:bound_t_lin_skectch}
    T_{\text{lin}} \lesssim n^{1/2}\tmix^{2^k - 1/2} + \tmix^{2^k}\eqsp.
\end{equation}
Now we upper bound the martingale-difference term in \eqref{eq:decomposition of R_k}. Using Minkowski's inequality, we obtain, using again \Cref{lem: poisson_sol}, that
\begin{equation}
\label{eq:get martingal of R_k_main}
\textstyle
\PE_{\pi}^{2^{k-s}}\bigl[\bigl| \sum_{i=0}^{n-1} g^{2^k}(Z_i) - \MKQ g^{2^k}(Z_i)\bigr|^{2^{s-k}}\bigr]
\lesssim \PE_{\pi}^{2^{k-s}}\bigl[\bigl| \sum_{i=0}^{n-1} \Delta M_i^{(k)} \bigr|^{2^{s-k}}\bigr] + \tmix^{2^k} \eqsp,
\end{equation}
where we have set $\Delta M_i^{(k)} = g^{2^k}(Z_{i+1}) - \MKQ g^{2^k}(Z_i)$. Since $\CPE[\pi]{\Delta M_i^{(k)}}{\mathcal{F}_i} = 0$,
\cite[Theorem 4.1]{pinelis_1994} implies that
\[
\textstyle
\PE_{\pi}^{2^{k-s}}\bigl[\bigl| \sum_{i=0}^{n-1} \Delta M_i^{(k)} \bigr|^{2^{s-k}}\bigr] \lesssim \PE_{\pi}^{2^{k-s}}\bigl[ \bigl| \sum_{j=0}^{n-1} \PE^{\mathcal{F}_j}_{\pi}[(\Delta M_j^{(k)})^2]\bigr|^{2^{s-k-1}}\bigr] + \PE_{\pi}^{2^{k-s}}\bigl[ \bigl\{ \underset{i}{\max} |\Delta M_i^{(k)}| \bigr\}^{2^{s-k}}\bigr]\eqsp.
\]
Using Markov's property and Jensen's inequality,
\begin{equation*}
0 \leq \PE^{\mathcal{F}_j}_{\pi}[(\Delta M_j^{(k)})^2] = \MKQ g^{2^{k+1}}(Z_j) - (\MKQ g^{2^k}(Z_j))^2 \leq \MKQ g^{2^{k+1}}(Z_j) - (\MKQ g(Z_j))^{2^{k+1}} = g_{k+1}(Z_j)\eqsp.
\end{equation*}
Using Minkowski's inequality, \eqref{eq:h_k_g_k_def} and \eqref{eq:bound_pi_g_k_h_k} we finally obtain
\begin{equation*}
\textstyle
\PE_{\pi}^{2^{k-s}}\bigl[ \bigl| \sum_{j=0}^{n-1} g_{k+1}(Z_j)  \bigr|^{2^{s-k-1}}\bigr] \lesssim n^{1/2}\tmix^{2^k -1/2} + R_{k+1,s}\eqsp.
\end{equation*}
Therefore, using that $\bigl|\Delta M_j^{(k)}\bigr| \leq 2\infnorm{g^{2^k}}$, and combining the bounds above in \eqref{eq:decomposition of R_k}, we obtain \eqref{eq:recurrent_equation_main}. Note that the r.h.s. of \eqref{eq:recurrent_equation_main} is homogeneous w.r.t. the ratio $n/\tmix$, as it was desired. To conclude it remains to bound $R_{s-1,s}$ using \Cref{lem:bound_of_second_moment}. This bound is homogeneous w.r.t. $n/\tmix$: omitting the terms not depending on $n$ and $\tmix$, we get
\begin{equation}
\label{eq:bound_R_s-1_main}
\textstyle{
R_{s-1,s}^2 \lesssim n^{1/2} \tmix^{2^{s-1}-1/2} + \tmix^{2^{s-1}}
}\eqsp.
\end{equation}
Now, combining the bounds above, and accounting for the multiplicative terms depending upon $s$, we get from \eqref{eq:rosenthal_pinelis_grouped} for $p = 2^{s}$, that
\begin{equation}
\label{eq:ros_main_simplified}
\textstyle{
\PE_{\pi}^{1/p}\bigl[\bigl| \Sstat_n \bigr|^p\bigr] \lesssim  p^{1/2} n^{1/2}\sigma_{\pi}(f) + n^{1/4}\tmix^{3/4}p(\log_2p) + \tmix p(\log_2p)
}\eqsp.
\end{equation}


\subsection{Proof of \Cref {theo:rosenthal}}
\label{sec:proof:theo:rosenthal}
We consider first the case $p = 2^{s}$, $s \in \nset$. Then the general result follows from the Lyapunov inequality.  

\begin{lemma}
\label{lem:UGE_dyadic}
Assume \Cref{ass:UGE}. Then, for any bounded measurable function $f: \Zset \rightarrow \rset^d$, $s \in \nset$, and $p = 2^{s}$, it holds that
\begin{equation*}
\PE_{\pi}^{1/p}[| \Sstat_n|^p] \leq \bConst{\sf{Rm}, 1} \sqrt{2}p^{1/2}n^{1/2}\sigma_{\pi}(f) +
\ConstD_{\ref{lem:UGE_dyadic},1} n^{1/4}\tmix^{3/4}p\log_2(p) \infnorm{f} + \ConstD_{\ref{lem:UGE_dyadic},2} \tmix p \log_2(p) \infnorm{f}\eqsp,
\end{equation*}
where the constants 
$\ConstD_{\ref{lem:UGE_dyadic},1}$ and $\ConstD_{\ref{lem:UGE_dyadic},2}$ are given by 
\begin{equation}
\label{eq:ros_uge_const_def}
\ConstD_{\ref{lem:UGE_dyadic},1} = (8/3) (19/3)^{1/2} \bConst{\sf{Rm}, 1} \eqsp, \quad \ConstD_{\ref{lem:UGE_dyadic},2} = (32/3) (\bConst{\sf{Rm}, 2}^{1/2} \bConst{\sf{Rm}, 1} ^2 + \bConst{\sf{Rm}, 2})\eqsp.
\end{equation}
\end{lemma}
\begin{proof}
To simplify the notations used in the proof, we will express all intermediate bounds below in terms of $s$. We proceed following the same arguments outlined in \Cref{sec:uge-chains}. First, we get from \eqref{eq:rosenthal_pinelis_grouped} with explicit constants that 
\begin{equation}
\label{eq:rosenthal_pinelis_grouped_proof}
\textstyle \PE_{\pi}^{2^{-s}}\bigl[\bigl| \Sstat_n \bigr|^{2^{s}}\bigr] \leq \bConst{\sf{Rm}, 1} 2^{s/2} n^{1/2} \sigma_{\pi}(f) 
+ \bConst{\sf{Rm}, 1} 2^{s/2} R_{1,s} + (16/3) \bConst{\sf{Rm}, 2} 2^{s} \tmix + (16/3) \tmix\eqsp.
\end{equation}
Now we aim to recursively bound $R_{j,s}$ in terms of $R_{j+1,s}$ for $j \in \{1,\ldots,s-2\}$. Using \Cref{lem:recurrence-R-k} and then \Cref{cor:technical_lemma} with $\alpha = \bConst{\sf{Rm}, 1}$, $\beta = \gamma = 8\tau/3$, $\kappa_0 = (16/3)^{-1/2} \tmix^{-1/4} n^{1/4} \ConstD_{\ref{lem:recurrence-R-k},1}$ and $\kappa_1 = \ConstD_{\ref{lem:recurrence-R-k},2}$, we finally get that for $n \geq \tmix$, it holds
\begin{multline}
\label{eq:bound of R_1}
R_{1,s} \leq \bConst{\sf{Rm}, 1} 2^{s/2} R_{s-1,s}^{1/2^{s-2}} + (8/3)(19/3)^{1/2} \bConst{\sf{Rm}, 1}  n^{1/4}\tmix^{3/4}2^{s/2}(s-2) \\
+ (8/3) \bConst{\sf{Rm}, 1} (9\bConst{\sf{Rm}, 2})^{1/2} \tmix 2^{s/2}(s-2)\eqsp.
\end{multline}
Now it remains to bound the term $R_{s-1,s}^2$. From \eqref{eq:decomposition of R_k} and \eqref{eq:get martingal of R_k_main} it follows
\begin{equation*}
\textstyle{
R_{s-1,s}^2 \leq  \PE_{\pi}^{1/2}\bigl[ \bigl| \sum_{i=0}^{n-1} g^{2^{s-1}}(Z_{i+1}) - \MKQ g^{2^{s-1}}(Z_i) \bigr|^{2} \bigr] + 2(8 \tmix /3)^{2^{s-1}} + \PE_{\pi}^{1/2}\bigl[\bigl| \sum_{i=0}^{n-1} h_{s-1}(Z_i) - \pi(h_{s-1}) \bigr|^{2}\bigr]
}\eqsp.
\end{equation*}
Note that if $i > j$, it holds that
\[
\PE_{\pi}\left[(g^{2^{s-1}}(Z_{i+1}) - \MKQ g^{2^{s-1}}(Z_i))(g^{2^{s-1}}(Z_{j+1}) - \MKQ g^{2^{s-1}}(Z_j))\right] = 0\eqsp.
\]
Therefore, since $\pi \MKQ = \pi$, we obtain
\begin{multline*}
\textstyle{
\PE_{\pi}\bigl[\bigl| \sum_{i=0}^{n-1} g^{2^{s-1}}(Z_{i+1}) - \MKQ g^{2^{s-1}}(Z_i)\bigr|^{2}\bigr] = n\PE_{\pi}\bigl[\bigl|g^{2^{s-1}}(Z_1) - \MKQ g^{2^{s-1}}(Z_0)\bigr|^2 \bigr]} \\
\textstyle{ \leq n\PE_{\pi}\left[ g^{2^s}(Z_0) - (\MKQ g(Z_0))^{2^s}\right] \leq n 2^{s}(8 \tmix /3)^{2^s-1}
}\eqsp.
\end{multline*}
Using \eqref{eq:bound infnorm h} and \Cref{lem:bound_of_second_moment}, we get
\begin{equation}
\label{eq:bound of R_s-1}
R_{s-1,s}^2 \leq (3/2) n^{1/2} 2^{s/2} (8 \tmix /3)^{2^{s-1}-1/2} + 2(8 \tmix /3)^{2^{s-1}}\eqsp.
\end{equation}
Combining \eqref{eq:bound of R_1} and \eqref{eq:bound of R_s-1}, we obtain for $n\geq \tmix$:
\begin{equation}
\label{eq: final bound R_1}
\textstyle{
R_{1,s} \leq (8/3) (19/3)^{1/2} \bConst{\sf{Rm}, 1} n^{1/4}\tmix^{3/4} 2^{s/2} (s-1) + 8 \bConst{\sf{Rm}, 1}  \bConst{\sf{Rm}, 2}^{1/2} \tmix 2^{s/2} (s-1)
}\eqsp.
\end{equation}
It remains to combine \eqref{eq: final bound R_1} and \eqref{eq:rosenthal_pinelis_grouped_proof}, and the statement follows.
\end{proof}

\paragraph{Proof of \Cref {theo:rosenthal} under arbitrary initial distribution} We now prove \eqref{eq:main_text_rosenthal_ksi}. By \cite[Lemma~19.3.6 and Theorem~19.3.9 ]{douc:moulines:priouret:soulier:2018}, for any two probabilities $\xi,\xi'$ on $(\Zset,\Zsigma)$ there is a \emph{maximal exact coupling} $(\Omega,\mathcal{F},\PPcoupling{\xi}{\xi'},Z,Z',T)$ of $\PP ^{\MKQ}_{\xi}$ and $\PP ^{\MKQ}_{\xi'}$, that is,
\begin{equation}
\label{eq:coupling_time_def_markov}
\textstyle{
\tvnorm{\xi \MKQ^n- \xi'\MKQ^n} = 2 \PPcoupling{\xi}{\xi'}(T > n)
}\eqsp.
\end{equation}
We write $\PEcoupling{\xi}{\xi'}$ for the expectation with respect to $\PPcoupling{\xi}{\xi'}$. With the triangle inequality and maximal exact coupling construction \eqref{eq:coupling_time_def_markov}, we obtain that
\begin{equation}
\textstyle{
\PE^{1/p}_{\xi} [|\sum_{i=0}^{n-1} \bar{f}(Z_i)|^{p}]
\leq \PE^{1/p}_{\pi}[|\sum_{i=0}^{n-1} \bar{f}(Z_i)|^{p}] +
\{\PEcoupling{\xi}{\pi}[|\sum_{i=0}^{n-1}\bigl(f(Z_i) - f(Z^{\prime}_i)\bigr)|^{p}]\}^{1/p}
}\eqsp.
\end{equation}
The first term is bounded with \Cref{theo:rosenthal}. Moreover, with \eqref{eq:coupling_time_def_markov} and $\infnorm{f}  \leq 1$, we get
\[
\textstyle{
\left|\sum_{i=0}^{n-1}\bigl(f(Z_i) - f(Z^{\prime}_i)\bigr)\right|^{p} \leq 2^{p}\bigl(\sum_{i=0}^{n-1}\indiacc{Z_i \neq Z^{\prime}_{i}}\bigr)^{p} = 2^{p} \bigl(\sum_{i=0}^{n-1}\indiacc{T > i}\bigr)^{p} \leq 2^{p} T^{p}
}\eqsp.
\]
We obtain combining the previous bounds that
\begin{equation*}
\textstyle{
\PE^{1/p}_{\xi}\left[\left|\sum_{i=0}^{n-1} \bar{f}(Z_i)\right|^{p}\right]
\leq \PE^{1/p}_{\pi}\left[\left|\sum_{i=0}^{n-1} \bar{f}(Z_i)\right|^{p}\right] + 2 \PEcoupling{\xi}{\pi}^{1/p}\bigl[T^{p}\bigr]
}\eqsp.
\end{equation*}
Assumption \Cref{ass:UGE} and triangle inequality imply that, for any $k \in \nset$,
\[ 
\textstyle{
\Delta(Q^{k}) = (1/2)\underset{z, z^{\prime} \in \Zset}{\sup}\tvnorm{Q^k(z, \cdot) - Q^k(z^{\prime}, \cdot)} \leq  \underset{z \in \Zset}{\sup}\tvnorm{Q^k(z, \cdot) - \pi} \leq 8 (1/4)^{k/\tmix}
}\eqsp.
\]
Hence, setting $\rho = (1/4)^{1/\tmix}$, we get
\begin{multline*}
\textstyle \PEcoupling{\xi}{\pi}\bigl[ T^{p}\bigr] = 1 + \sum_{k=2}^{\infty} \{ k^{p} - (k-1)^p \}\,\PPcoupling{\xi}{\pi}\bigl(T > k-1\bigr)  \\ 
\textstyle \leq 1 + \sum_{k=2}^{\infty} \{k^{p} - (k-1)^p\} \Delta(Q^{k-1}) \leq 1+ 8\rho^{-1} \bigl(1-\rho\bigr) \sum_{k=1}^\infty k^p \rho^{k}.
\end{multline*}
Now we use the upper bound, for $\rho \in (0,1)$,
\begin{equation*}
\textstyle{
\sum_{k=1}^{\infty}k^{p} \rho^{k}
\leq \rho^{-1}\int_{0}^{+\infty}x^{p}\rho^{x}\,\rmd x \leq \rho^{-1}\left(\ln{\rho^{-1}}\right)^{-p-1} \Gamma(p+1)
}\eqsp.
\end{equation*}
Combining the bounds above and the elementary inequality $1-\rho \leq \ln{\rho^{-1}}$, we obtain
\begin{align}
\PEcoupling{\xi}{\pi}\bigl[ T^{p}\bigr] \leq 1 + 8 \rho^{-2}\left(\ln{\rho^{-1}}\right)^{-p} \Gamma(p+1) \leq 1 + 128 (\tmix/\ln{4})^{p}\Gamma (p+1)\eqsp.
\end{align}
To complete the proof, we use an upper bound $\Gamma(p+1) \leq (p+1)^{p+1/2}e^{-p}$ due to \cite[Theorem 2]{guo:bounds_for_gamma_function} and apply an elementary inequality $(p+1)^{1/2} \leq 2^{p/2}$.

\subsection{Auxiliary lemmas for \Cref{theo:rosenthal}}
We establish a first Rosenthal inequality in which the leading term with respect to $n$ is not governed by the asymptotic variance $\sigma_{\pi}(f)$ defined in \eqref{eq:asympt_var_def}.
\begin{lemma}
\label{lem:auxiliary_rosenthal}
Under \Cref{ass:UGE}, for any bounded measurable function $f: \Zset \rightarrow \rset$, and $p \geq 2$,
\begin{equation*}
\PE_{\pi}^{1/p}\bigl[\bigr| \Sstat_n \bigr|^p\bigr] \leq \ConstD_{\ref{lem:auxiliary_rosenthal},1} n^{1/2}\tmix^{1/2}p^{1/2} \|f\|_{\infty} + \ConstD_{\ref{lem:auxiliary_rosenthal},2} \tmix p \|f\|_{\infty}\eqsp,
\end{equation*}
where $\ConstD_{\ref{lem:auxiliary_rosenthal},1} = (16/3) \bConst{\sf{Rm}, 1}$, $\ConstD_{\ref{lem:auxiliary_rosenthal},2} = 8 \bConst{\sf{Rm}, 2}$.
\end{lemma}
\begin{proof}
Without loss of generality, assume that  $\|f\|_{\infty} \leq 1$. Using Minkowski's inequality, 
\begin{equation*}
\textstyle
\PE_{\pi}^{1/p}\bigl[\bigr| \Sstat_n \bigr|^p\bigr] \leq  \PE_{\pi}^{1/p}\bigl[\bigl| \sum_{i=0}^{\lfloor n/\tmix \rfloor \tmix-1}\barf(Z_i) \bigr|^p\bigr] + 2\tmix \eqsp.
\end{equation*}
Note that $g_\tmix = \sum_{i = 0}^{+\infty}\MKQ^{i\tmix} \bar{f} $ is well-defined under \Cref{ass:UGE}. Moreover, $\infnorm{g_\tmix} \leq 8/3$, and $g_\tmix$ is a solution of the Poisson equation associated with the $\tmix$-th iterate of $\MKQ$, that is, 
\[
g_{\tmix}(z) - \MKQ^{\tmix}g_{\tmix}(z) = \bar{f}(z) \eqsp.
\]
Define $q := \lfloor n/\tmix \rfloor$, then we have
\begin{equation}
\label{eq:decomposition}
\textstyle{
\sum_{i=0}^{q\tmix - 1}\barf(Z_i) = \sum_{r = 0}^{\tmix-1} B_{\tmix,r} \quad \text {with} \quad B_{\tmix,r}= \sum_{k = 0}^{q-1} \bigl\{g_\tmix(Z_{k\tmix+r}) - Q^{\tmix}g_\tmix(Z_{k\tmix+r})\bigr\}
}\eqsp.
\end{equation}
Using Minkowski's inequality, we get $\PE_{\pi}^{1/p}\bigl[\bigl| \sum_{i=0}^{q \tmix-1}\barf(Z_i) \bigr|^p\bigr] \leq \sum_{r=0}^{\tmix-1} b_{\tmix,r,p}$, where
\begin{align}
\label{eq:bloks}
\textstyle{
b_{\tmix,r,p} = \PE_{\pi}^{1/p} [|B_{\tmix,r}|]^p \leq \PE_{\pi}^{1/p}\bigl[ \bigl| \sum_{k=1}^{q} \left\{ g_\tmix(Z_{k\tmix+r}) - Q^{\tmix}g_\tmix(Z_{(k-1)\tmix+r}) \right\} \bigr|^p \bigr] + 2 \infnorm{g_\tmix}
}\eqsp.
\end{align}
Applying the Pinelis version of Rosenthal inequality \cite[Theorem~4.1]{pinelis_1994}, 
\begin{equation}
\label{eq:bound-rosenthal}
\begin{split}
&\textstyle
\PE_{\pi}^{1/p}\bigl[\bigl| \sum_{k=1}^{q} \left\{ g_{\tmix}(Z_{k\tmix+r}) - Q^{\tmix}g_{\tmix}(Z_{(k-1)\tmix+r}) \right\} \bigr|^p \bigr] \\
& \qquad  \textstyle \leq \bConst{\sf{Rm}, 2} p \PE_{\pi}^{1/p}\bigl[ \underset{1 \leq i \leq q}{\max} \left| g_{\tmix}(Z_{i\tmix+r}) - Q^{\tmix}g_{\tmix}(Z_{(i-1)\tmix+r}) \right| ^ p \bigr]  \\
&\textstyle
 \qquad \qquad + \bConst{\sf{Rm}, 1} p^{1/2}\PE^{1/p}\bigl[ \bigl( \sum_{j = 1}^{q} \PE_{\pi}^{\mathcal{F}_{\tmix,r,j-1}} \bigl| g_{\tmix}(Z_{j\tmix+r}) - Q^{\tmix}g_{\tmix}(Z_{(j-1)\tmix+r}) \bigr|^2 \bigr)^{p/2} \bigr],
\end{split}
\end{equation}
where $\bConst{\sf{Rm}, 1}$ and $\bConst{\sf{Rm}, 2}$ are absolute constants given in \Cref{sec:constants}, and $\mathcal{F}_{\tmix,r,j}= \sigma( Z_{i \tmix+r}, 0 \leq i \leq j)$. Using that $\bigl|g_{\tmix}(Z_{k\tmix+r}) - Q^{\tmix}g_{\tmix}(Z_{(k-1)\tmix+r})\bigr| \leq 2 \infnorm{g_\tmix}$, we get
\begin{equation*}
b_{\tmix,r,p} \leq 2 \bConst{\sf{Rm}, 1} p^{1/2}q^{1/2}\infnorm{g_\tmix} + 2 \bConst{\sf{Rm}, 2} p\infnorm{g_\tmix} + 2\infnorm{g_\tmix}.
\end{equation*}
Combining the previous inequalities and using that $\bConst{\sf{Rm}, 2} p \geq 2$, we obtain that
\begin{equation*}
\PE_{\pi}^{1/p}\bigl[\bigl| \Sstat_n \bigr|^p\bigr] \leq (16/3) \bConst{\sf{Rm}, 1} p^{1/2}n^{1/2}\tmix^{1/2} + 8 \bConst{\sf{Rm}, 2} p \tmix\eqsp.
\end{equation*}
\end{proof}

\begin{lemma}
\label{lem:recurrence-R-k}
Assume \Cref{ass:UGE}. Then for any bounded measurable function $f: \Zset \rightarrow \rset^d$, $\infnorm{f} \leq 1$, any $s \in \nset$ and $k \in \{1,\ldots,s-1\}$,
\begin{multline}
\label{eq:recurrent equation}
R_{k,s} \leq \bConst{\sf{Rm}, 1}^{1/2} 2^{(s-k)/4} R^{1/2}_{k+1,s} \\
+ (16/3)^{-1/2} \tmix^{-1/4} \ConstD_{\ref{lem:recurrence-R-k},1} n^{1/4} (8\tmix/3)^{2^{k-1}} 2^{(s+k)/4} +
\ConstD_{\ref{lem:recurrence-R-k},2} 2^{s/2} (8\tmix/3)^{2^{k-1}}  \eqsp,
\end{multline}
where $\ConstD_{\ref{lem:recurrence-R-k},1} = (19/3)^{1/2}\bConst{\sf{Rm}, 1}^{1/2}$ and $\ConstD_{\ref{lem:recurrence-R-k},2} = 3\bConst{\sf{Rm}, 2}^{1/2}$.
\end{lemma}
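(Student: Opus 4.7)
The plan is to start from the decomposition \eqref{eq:decomposition of R_k} already established in the excerpt, which yields $R_{k,s}^{2}\leq T_{mart}+T_{lin}$, and to treat the two pieces separately, combining them at the end via subadditivity $R_{k,s}\leq T_{mart}^{1/2}+T_{lin}^{1/2}$. Throughout the proof I use the bound $\infnorm{g}\leq 8\tmix/3$ from \Cref{lem: poisson_sol} together with the factorisation \eqref{eq:smart-decomposition-hk}, which yields $\infnorm{h_{k}}\leq 2^{k+1}\infnorm{g}^{2^{k}-1}$ and, since $\pi(g_{k+1})=\pi(h_{k+1})$ (a consequence of $\pi\MKQ=\pi$), also $|\pi(g_{k+1})|\leq 2^{k+2}(8\tmix/3)^{2^{k+1}-1}$.

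For $T_{mart}$, I rewrite $\sum_{i=0}^{n-1}[g^{2^{k}}(Z_{i})-\MKQ g^{2^{k}}(Z_{i})]$ as $-M_{n}^{(k)}+g^{2^{k}}(Z_{0})-g^{2^{k}}(Z_{n})$, where $M_{n}^{(k)}=\sum_{i=0}^{n-1}[g^{2^{k}}(Z_{i+1})-\MKQ g^{2^{k}}(Z_{i})]$ is an $(\mathcal{F}_{i})$-martingale and the boundary term is uniformly bounded by $2\infnorm{g^{2^{k}}}\leq 2(8\tmix/3)^{2^{k}}$. I then apply the Pinelis--Rosenthal inequality \cite[Theorem~4.1]{pinelis} to $M_{n}^{(k)}$ with $q=2^{s-k}$: the max term is controlled deterministically by $|\Delta M_{i}^{(k)}|\leq 2(8\tmix/3)^{2^{k}}$ and contributes, after taking $T_{mart}^{1/2}$, a term of order $\bConst{\sf{Rm}, 2}^{1/2}2^{s/2}(8\tmix/3)^{2^{k-1}}$. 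The crux is the conditional-variance term: one computes $\CPE[\pi]{(\Delta M_{i}^{(k)})^{2}}{\mathcal{F}_{i}}=\MKQ g^{2^{k+1}}(Z_{i})-(\MKQ g^{2^{k}}(Z_{i}))^{2}$, and since $g^{2^{k}}\geq 0$ for $k\geq 1$, Jensen's inequality applied to the convex map $x\mapsto x^{2^{k}}$ gives $\MKQ g^{2^{k}}\geq (\MKQ g)^{2^{k}}\geq 0$; squaring yields $(\MKQ g^{2^{k}})^{2}\geq (\MKQ g)^{2^{k+1}}$, so the predictable quadratic variation is pointwise dominated by $g_{k+1}(Z_{i})$. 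Minkowski in $L^{q/2}$ then gives $\|\langle M^{(k)}\rangle_{n}\|_{L^{q/2}}\leq R_{k+1,s}^{2}+n|\pi(g_{k+1})|$, and two successive square roots (one from Pinelis, one from $T_{mart}^{1/2}$) produce the leading term $\bConst{\sf{Rm}, 1}^{1/2}2^{(s-k)/4}R_{k+1,s}^{1/2}$ together with an $(n|\pi(g_{k+1})|)^{1/4}$ contribution. Plugging in the bound on $|\pi(g_{k+1})|$ above and extracting one factor $(8\tmix/3)^{-1/4}$ from $|\pi(g_{k+1})|^{1/4}$ yields the $\tmix^{-1/4}n^{1/4}(8\tmix/3)^{2^{k-1}}2^{(s+k)/4}$ scaling.

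For $T_{lin}$, I apply \Cref{lem:auxiliary_rosenthal} to $h_{k}/\infnorm{h_{k}}$ with $p=2^{s-k}$; the key observation is that the sharp factorisation-based bound $\infnorm{h_{k}}\leq 2^{k+1}\infnorm{g}^{2^{k}-1}$, rewritten as $\infnorm{h_{k}}^{1/2}=2^{(k+1)/2}(8\tmix/3)^{2^{k-1}}(8\tmix/3)^{-1/2}$, contains a hidden factor $(8\tmix/3)^{-1/2}$ that combines with the $\tmix^{1/2}$ of the variance term of \Cref{lem:auxiliary_rosenthal} to give $\tmix^{-1/4}$ after a final square root, and likewise reduces the linear-in-$\tmix$ second term of \Cref{lem:auxiliary_rosenthal} to a pure numerical constant times $(8\tmix/3)^{2^{k-1}}$ after the square root. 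Summing $T_{mart}^{1/2}+T_{lin}^{1/2}$ and regrouping the three classes of terms --- the $R_{k+1,s}^{1/2}$ term, the $n^{1/4}(8\tmix/3)^{2^{k-1}}$ term and the pure $(8\tmix/3)^{2^{k-1}}$ term --- then yields \eqref{eq:recurrent equation}, with $\ConstD_{\ref{lem:recurrence-R-k},1}$ and $\ConstD_{\ref{lem:recurrence-R-k},2}$ chosen to absorb the remaining dimensionless numerical constants. The main obstacle is the Jensen step above: without the pointwise domination of the predictable quadratic variation by $g_{k+1}$, the recurrence would not close back on $R_{k+1,s}$ at the next level; a secondary difficulty is tracking the exponents $2^{k-1}$, $2^{k}$ and $2^{k+1}-1$ through the various half and quarter powers so that exactly one factor $(8\tmix/3)^{-1/2}$ can be extracted both from $\infnorm{h_{k}}$ and from $|\pi(g_{k+1})|$ to produce the common $\tmix^{-1/4}$ scaling appearing in the final bound.
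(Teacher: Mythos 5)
Your proposal is correct and follows essentially the same route as the paper's proof: decompose $R_{k,s}^2$ via \eqref{eq:decomposition of R_k}, bound $T_{lin}$ with \Cref{lem:auxiliary_rosenthal} applied to $h_k$ (exploiting the factorisation \eqref{eq:smart-decomposition-hk} and the extraction of a factor $(8\tmix/3)^{-1/2}$ from $\infnorm{h_k}^{1/2}$), and bound $T_{mart}$ via Pinelis together with the Jensen step $\PE^{\mathcal{F}_j}_{\pi}[(\Delta M_j^{(k)})^2]\le g_{k+1}(Z_j)$, which is exactly the paper's \eqref{eq:induction}. The only divergence is cosmetic bookkeeping: the paper records $\infnorm{h_k}\le 2^{k-1}\infnorm{g}^{2^k-1}$ whereas you write $2^{k+1}$, and after centering $g_{k+1}$ the paper controls the constant term directly by $n^{1/2}\infnorm{h_{k+1}}^{1/2}$ rather than via $n|\pi(g_{k+1})|$ followed by a square root — equivalent up to absolute constants, and neither choice changes the structure of the recursion or the exponents on $n$, $\tmix$ and $2^{s}$.
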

\begin{proof}
We aim to estimate both summands in the decomposition \eqref{eq:decomposition of R_k}. Note that for $k \in \nset$, using \eqref{eq:smart-decomposition-hk}, $\infnorm{h_k}  \leq 2^{k-1}\infnorm{g}^{2^k-1}$
where we have used that $g - \MKQ g = f$. From \Cref{lem: poisson_sol} it follows
\begin{equation}
    \label{eq:bound infnorm h}
    \infnorm{h_k} \leq 2^{k-1}(8/3)^{2^k - 1}\tmix^{2^k-1}.
\end{equation}
\Cref{lem:auxiliary_rosenthal} with $p = 2^{s-k}$ implies that
\begin{equation*}
\textstyle
\PE_{\pi}^{2^{k-s}}\bigl[\bigl| \sum_{i=0}^{n-1} h_k(Z_i) - \pi(h_k) \bigr|^{2^{s-k}}\bigr] \leq \ConstD_{\ref{lem:auxiliary_rosenthal},1} n^{1/2}\tmix^{2^k - 1/2} 2^{(s+k)/2 - 1} (8/3)^{2^k-1} + \ConstD_{\ref{lem:auxiliary_rosenthal},2} \tmix^{2^k}2^{s-1}(8/3)^{2^k-1}.
\end{equation*}
Now we upper bound the martingale-difference term in \eqref{eq:decomposition of R_k}. Using Minkowski's inequality, we obtain, using again \Cref{lem: poisson_sol}, that
\begin{equation}
\label{eq:get martingal of R_k}
\textstyle
\PE_{\pi}^{2^{k-s}}\bigl[\bigl| \sum_{i=0}^{n-1} g^{2^k}(Z_i) - \MKQ g^{2^k}(Z_i)\bigr|^{2^{s-k}}\bigr]
\leq \PE_{\pi}^{2^{k-s}}\bigl[\bigl| \sum_{i=0}^{n-1} \Delta M_i^{(k)} \bigr|^{2^{s-k}}\bigr] + 2(8/3)^{2^k}\tmix^{2^k} \eqsp,
\end{equation}
where we have set $\Delta M_i^{(k)} = g^{2^k}(Z_{i+1}) - \MKQ g^{2^k}(Z_i)$. Since $\CPE[\pi]{\Delta M_i^{(k)}}{\mathcal{F}_i} = 0$,
\cite[Theorem 4.1]{pinelis_1994} implies that
\begin{multline*}
\textstyle
\PE_{\pi}^{2^{k-s}}\bigl[\bigl| \sum_{i=0}^{n-1} \Delta M_i^{(k)} \bigr|^{2^{s-k}}\bigr] \leq \bConst{\sf{Rm}, 1} 2^{(s-k)/2}\PE_{\pi}^{2^{k-s}}\bigl[ \bigl| \sum_{j=0}^{n-1} \PE^{\mathcal{F}_j}_{\pi}[(\Delta M_j^{(k)})^2]\bigr|^{2^{s-k-1}}\bigr]  \\ + \bConst{\sf{Rm}, 2} 2^{s-k} \PE_{\pi}^{2^{k-s}}\bigl[ \bigl\{ \underset{i}{\max} |\Delta M_i^{(k)}| \bigr\}^{2^{s-k}}\bigr].
\end{multline*}
Using Markov's property and Jensen's inequality,
\begin{equation*}
0 \leq \PE^{\mathcal{F}_j}_{\pi}[(\Delta M_j^{(k)})^2] = \MKQ g^{2^{k+1}}(Z_j) - (\MKQ g^{2^k}(Z_j))^2 \leq \MKQ g^{2^{k+1}}(Z_j) - (\MKQ g(Z_j))^{2^{k+1}} = g_{k+1}(Z_j)\eqsp.
\end{equation*}
Using Minkowski's inequality, $\pi(h_{k+1}) = \pi(g_{k+1})$, we finally obtain
\begin{equation}
\label{eq:bound_2nd_moment_uge}
\begin{split}
\textstyle
\PE_{\pi}^{2^{k-s}}\bigl[ \bigl| \sum_{j=0}^{n-1} g_{k+1}(Z_j)  \bigr|^{2^{s-k-1}}\bigr] 
& \textstyle \leq n^{1/2}\infnorm{h_{k+1}}^{1/2} + R_{k+1,s} \\
& \textstyle \leq n^{1/2}\tmix^{2^k - 1/2 }2^{k/2}(8/3)^{2^k-1/2} + R_{k+1,s}\eqsp.
\end{split}
\end{equation}
Therefore, using  $\bigl|\Delta M_j^{(k)}\bigr| \leq 2\infnorm{g^{2^k}}$, \Cref{lem: poisson_sol}, combining the bounds above in \eqref{eq:decomposition of R_k} and taking the square root, we obtain \eqref{eq:recurrent equation}.
\end{proof}

\subsection{Auxiliary lemmas for $V$-uniform geometrically ergodic case.}
We give in the next result the counterpart of  \Cref{lem:auxiliary_rosenthal} under \Cref{ass:VGE}, i.e., a simple version of Rosenthal inequality  which does not include the variance term.
\begin{lemma}
\label{lem: auxiliary_rosenthal_VGE}
Assume \Cref{ass:VGE}. Then for any $p\geq 2$, $n \geq \tmix$ and $f: \Zset \rightarrow \rset$ with $\Vnorm[V^{1/p}]{f} < \infty$,
\begin{equation*}
\PE_{\pi}^{1/p}\bigl[\bigl| \Sstat_n \bigr|^p\bigr] \leq  \ConstD_{\ref{lem: auxiliary_rosenthal_VGE},1} p^{3/2} n^{1/2} \tmix^{1/2} \{\varkappa \pi(V)\}^{1/p} \Vnorm[ V^{1/p}]{f} + \ConstD_{\ref{lem: auxiliary_rosenthal_VGE},2} p^2n^{1/p}\tmix^{1-1/p}\{\varkappa \pi(V)\}^{1/p}\Vnorm[ V^{1/p}]{f}\eqsp,
\end{equation*}
where $\ConstD_{\ref{lem: auxiliary_rosenthal_VGE},1} = (8/3) \bConst{\sf{Rm}, 1}$, $\ConstD_{\ref{lem: auxiliary_rosenthal_VGE},2} = (16/3) \bConst{\sf{Rm}, 2} + 11/3$.
\end{lemma}
\begin{proof}
The proof is along the same lines as the proof of \Cref{lem:auxiliary_rosenthal}. Without loss of generality we assume that $\Vnorm[V^{1/p}]{f} = 1$. Using Minkowski's inequality,
\begin{equation}
\label{eq: whole_part_VGE}
\textstyle{
\PE_{\pi}^{1/p}\bigl[\bigl| \sum_{i=0}^{n-1}\barf(Z_i) \bigr|^p\bigr] \leq \PE_{\pi}^{1/p}\bigl[\bigl| \sum_{i=0}^{\lfloor n/\tmix \rfloor \tmix-1}\barf(Z_i) \bigr|^p\bigr] + 2\tmix\{\pi(V)\}^{1/p}
}\eqsp.
\end{equation}
As before, we consider the function $g_\tmix = \sum_{i = 0}^{+\infty}\MKQ^{i\tmix} \bar{f}$, which is a solution of the Poisson equation associated with  $\MKQ^{\tmix}$ and $f$. It is well-defined under \Cref{ass:VGE} and \Cref{lem: poisson_sol} shows that
\begin{align}
\label{eq: bound_of_pi(g)_t_mix}
|g_\tmix(z)| &\leq (1/3)\varkappa^{1/p}2^{3-1/p} \{\pi(V) + V(z)\}^{1/p} p \eqsp, \nonumber\\
|\MKQ^{\tmix}g_\tmix(z)| & \leq (1/3)\varkappa^{1/p}2^{3-1/p} \{\pi(V) + V(z)\}^{1/p} p \eqsp, \\
\PE_{\pi}^{1/p}[|g_\tmix(Z_0)|^{p}] &\leq  (8/3) \{\varkappa \pi(V)\}^{1/p} p\eqsp \nonumber.
\end{align}
We use the decomposition \eqref{eq:decomposition}.
Proceeding as in \eqref{eq:bloks}, we get $\PE_{\pi}^{1/p}\bigl[\bigl| \sum_{i=0}^{q \tmix-1}\barf(Z_i) \bigr|^p\bigr] \leq \sum_{r=0}^{\tmix-1} b_{\tmix,r,p}$, where $b_{\tmix,r,p} = \PE_{\pi}^{1/p} [|B_{\tmix,r}|]^p$, 
and it remains to upper bound $b_{\tmix,r,p}$. Applying Minkowski's inequality,
\begin{equation}
\label{eq:b_tmix_bound_v_erg}
\textstyle{
b_{\tmix,r,p} \leq \PE_{\pi}^{1/p}\bigl[\bigl| \sum_{k=1}^{q} \bigl\{ g_\tmix(Z_{k\tmix+r}) - Q^{\tmix}g_\tmix(Z_{(k-1)\tmix+r}) \bigr\} \bigr|^p \bigr] + (16/3) \varkappa^{1/p} \{\pi(V)\}^{1/p} p
}\eqsp.
\end{equation}
Now we use again the Pinelis version of Rosenthal inequality \eqref{eq:bound-rosenthal}. Applying Minkowski's inequality and using \eqref{eq: bound_of_pi(g)_t_mix}, we get
\begin{equation}
\label{eq: bound_max_rosenthal}
    \PE_{\pi}^{1/p}\bigl[ \underset{1 \leq i \leq q}{\max} \bigl| g_\tmix(Z_{i\tmix+r}) - Q^{\tmix}g_\tmix(Z_{(i-1)\tmix+r}) \bigr| ^ p \bigr] \leq (16/3) \varkappa^{1/p}  q^{1/p} \{\pi(V)\}^{1/p} p\eqsp.
\end{equation}
To bound the corresponding martingale-difference term, we note that $\PE_{\pi}^{\mathcal{F}_{\tmix,r,j-1}}\bigl| g_\tmix(Z_{j\tmix+r}) - Q^{\tmix}g_\tmix(Z_{(j-1)\tmix+r}) \bigr|^2 = Q^{\tmix}g_\tmix^2(Z_{(j-1)\tmix+r}) - (Q^{\tmix}g_\tmix(Z_{(j-1)\tmix+r}))^2$, and
\begin{multline*}
\textstyle{
\PE_{\pi}^{2/p}\bigl[ \bigl( \sum_{j = 1}^{q} \PE_{\pi}^{\mathcal{F}_{\tmix,r,j-1}} \bigl| g_\tmix(Z_{j\tmix+r}) - Q^{\tmix}g_\tmix(Z_{(j-1)\tmix+r}) \bigr|^2 \bigr)^{p/2} \bigr]
}\\ 
\textstyle{
\leq \sum_{j=1}^{q}  \PE_{\pi}^{2/p} \bigl[ Q^{\tmix}g_\tmix^2(Z_{(j-1)\tmix+r}) - (Q^{\tmix}g_\tmix(Z_{(j-1)\tmix+r}))^2 \bigr]^{p/2}
}\eqsp.
\end{multline*}
Using the conditional version of Jensen's inequality, we get 
\[
\PE_{\pi}^{2/p} \bigl[ Q^{\tmix}g_\tmix^2(Z_{(j-1)\tmix+r}) - (Q^{\tmix}g_\tmix(Z_{(j-1)\tmix+r}))^2 \bigr]^{p/2}
\leq \PE_{\pi}^{2/p}\bigl[Q^{\tmix}g_\tmix^{p}(Z_{(j-1)\tmix+r})\bigr]
\leq ((8/3)\varkappa^{1/p} \{\pi(V)\}^{1/p} p)^2\eqsp.
\]
Combining the bounds above and using Minkowski's inequality, we obtain
\begin{equation}
\label{eq: bound_sum_rosenthal}
\textstyle{
\PE_{\pi}^{1/p}\bigl[ \bigl( \sum_{j = 1}^{q} \PE_{\pi}^{\mathcal{F}_{\tmix,r,j-1}} \bigl| g_\tmix(Z_{j\tmix+r}) - Q^{\tmix}g_\tmix(Z_{(j-1)\tmix+r}) \bigr|^2 \bigr)^{p/2} \bigr] \leq \tfrac{8}{3} \varkappa^{1/p} q^{1/2} \{\pi(V)\}^{1/p} p
}\eqsp.
\end{equation}
From \eqref{eq:b_tmix_bound_v_erg} and \eqref{eq: bound_sum_rosenthal} it follows, that
\[
b_{\tmix,r,p}
\leq (16/3) \bConst{\sf{Rm}, 2} p^2q^{1/p} \{\varkappa \pi(V)\}^{1/p} + (8/3) \bConst{\sf{Rm}, 1} p^{3/2}q^{1/2}\{\varkappa \pi(V)\}^{1/p}
+ (16/3) p \{\varkappa \pi(V)\}^{1/p}\eqsp.
\]
Finally, substituting $q = \lfloor n/\tmix \rfloor$ and combining the bound above with \eqref{eq: whole_part_VGE}, the result follows.
\end{proof}

\begin{lemma}
\label{lem:recurrence-R-k_VGE}
Assume \Cref{ass:VGE}. Then for any $\beta \geq 0$, $n \geq \tmix$, $f: \Zset \rightarrow \rset$ with $\Vnorm[\log^{\beta}V]{f} < \infty$, and $k \in \{1,\ldots,s-1\}$,
\begin{multline}
\label{eq: recurrent_equation_VGE}
R_{k,s}^2 \leq \bConst{\sf{Rm}, 1}2^{(s-k)/2}R_{k+1, s} + \bConst{\sf{Rm}, 2} 2^{s-k+1}\Bound_{max}(p,k,n,f,\tmix) \\ + 2\Bound_{\operatorname{\operatorname{lin}}, 1}(p,k,n,f,\tmix) +\Bound_{\operatorname{lin}, 2}(p,k,n,f,\tmix) \eqsp,
\end{multline}
where 
\begin{align}
\Bound_{\operatorname{lin}, 1}
&= \ConstD_{\ref{lem: auxiliary_rosenthal_VGE},1} (2^s)^{2^{k} + 1/2}2^{-1-k/2} (n/\tmix)^{1/2}\tmix^{2^k}\{\varkappa \pi(V)\}^{(2^{k+1} -1)2^{-s}}(8/3)^{2^k - 1} \Vnorm[V^{2^{-s}}]{f}^{2^k} \eqsp, \nonumber  \\
\Bound_{\operatorname{\operatorname{lin}}, 2} 
&= \ConstD_{\ref{lem: auxiliary_rosenthal_VGE},2} (2^s)^{2^{k} + 1}2^{-k -1}(n/\tmix)^{2^{k-s}}\tmix^{2^k}\{\varkappa \pi(V)\}^{(2^{k+1} - 1)2^{-s}}(8/3)^{2^k - 1} \Vnorm[V^{2^{-s}}]{f}^{2^k}\eqsp, \\
\label{eq:t_max_bound_def}
\textstyle{
\Bound_{\max}} &= \textstyle{\bigl\{(8 \rme/3) 2^{s} \tmix \{\log{n}\} \{\varkappa \pi(V)\}^{2^{-s}}\Vnorm[V^{1/(2^s \log{n})}]{f}\bigr\}^{2^k}
}\eqsp.
\end{align}
Arguments $p,k,n,f,\tmix$ are implicit in the definition of the above terms.
\end{lemma}
\begin{proof}
Proceeding as in \Cref{lem:recurrence-R-k}, we aim to bound the both terms in the decomposition \eqref{eq:decomposition of R_k}. In order to do this, we need to control the norm of $h_k$ defined in \eqref{eq:smart-decomposition-hk}. Since $g = \sum_{k=0}^{\infty}\MKQ^{k} \bar{f}$, we can bound $\MKQ g(z)$ using \Cref{lem: poisson_sol}-\ref{pois:VGE} applied with $\alpha = 2^{-s}$. Therefore, we get for $i \in \{0,\ldots,k-1\}$ that
\begin{align}
\label{eq: bound_part_of_h_k}
\textstyle{
g^{2^i}(z) + (Qg(z))^{2^i} \leq 2 \left( (2^{3-2^{-s}}/3) \varkappa^{2^{-s}} 2^{s} \{\pi(V) + V(z)\}^{2^{-s}} \tmix \Vnorm[V^{2^{-s}}]{f} \right)^{2^i}
}\eqsp.
\end{align}
Combining 
\eqref{eq:smart-decomposition-hk} and \eqref{eq: bound_part_of_h_k}, we obtain
\begin{equation}
\label{eq: bound_V_norm_of_h_k}
\textstyle{
\Vnorm[V^{2^{k-s}}]{h_k} \leq 2^{k-1}\left( (8/3) 2^{s} \{\varkappa \pi(V)\}^{2^{-s}} \tmix \right)^{2^k - 1} \Vnorm[V^{2^{-s}}]{f}^{2^k}
}\eqsp.
\end{equation}
The application of the crude Rosenthal inequality of \Cref{lem: auxiliary_rosenthal_VGE} implies that 
\[
\textstyle{
\PE_{\pi}^{2^{k-s}}\bigl[\bigl| \sum_{i=0}^{n-1} h_k(Z_i) - \pi(h_k) \bigr|^{2^{s-k}}\bigr] \leq \Bound_{\operatorname{\operatorname{lin}}, 1}(s,k,n,f,\tmix) + \Bound_{\operatorname{\operatorname{lin}}, 2}(s,k,n,f,\tmix)
}\eqsp,
\]
Now we upper bound the martingale-difference term $T_{\text{mart}}$ in \eqref{eq:decomposition of R_k}.
Using Minkowski's inequality and \Cref{lem: poisson_sol}, we obtain that
\begin{multline}
\label{eq: get_martingal_of_R_k_VGE}
\textstyle{
\PE_{\pi}^{2^{k-s}}\bigl[\bigl| \sum_{i=0}^{n-1} g^{2^k}(Z_i) - \MKQ g^{2^k}(Z_i)\bigr|^{2^{s-k}}\bigr]  
} \\ 
\textstyle{
\leq \PE_{\pi}^{2^{k-s}}\bigl[\bigl| \sum_{i=0}^{n-1} g^{2^k}(Z_{i+1}) - \MKQ g^{2^k}(Z_i)\bigr|^{2^{s-k}}\bigr] + 2\bigl\{ (8/3) \varkappa^{2^{-s}} \tmix 2^s\{\pi(V)\}^{2^{-s}} \Vnorm[V^{2^{-s}}]{f}\bigr\}^{2^k}
}\eqsp.
\end{multline}
Setting $\Delta M_i^{(k)} = g^{2^k}(Z_{i+1}) - \MKQ g^{2^k}(Z_i)$ and applying \cite[Theorem 4.1]{pinelis_1994}, we get
\begin{multline*}
\textstyle{
\PE_{\pi}^{2^{k-s}}\bigl[\bigl| \sum_{i=0}^{n-1} \Delta M_i^{(k)} \bigr|^{2^{s-k}}\bigr] \leq \bConst{\sf{Rm}, 1} 2^{(s-k)/2}\PE_{\pi}^{2^{k-s}}\bigl[ \bigl| \sum_{j=0}^{n-1} \PE_{\pi}^{\mathcal{F}_j}\bigl\{ (\Delta M_j^{(k)})^2 \bigr\}\bigr|^{2^{s-k-1}}\bigr]
}\\ 
\textstyle{
+ \bConst{\sf{Rm}, 2}2^{s-k}\PE_{\pi}^{2^{k-s}}\bigl[ \bigl\{ \underset{i}{\max} |\Delta M_i^{(k)}| \bigr\}^{2^{s-k}}\bigr]
}\eqsp.
\end{multline*}
Proceeding as in \eqref{eq:bound_2nd_moment_uge}, we get
\begin{equation*}
\textstyle{
\PE_{\pi}^{2^{k-s}}\bigl[ \bigl| \sum_{j=0}^{n-1} \PE_{\pi}^{\mathcal{F}_j}\bigl\{ (\Delta M_j^{(k)})^2 \bigr\} \bigr|^{2^{s-k-1}}\bigr] \leq n^{1/2}\Vnorm[V^{2^{k+1-s}}]{h_{k+1}}^{1/2}\{\pi(V)\}^{2^{k-s}} + R_{k+1, s}
}\eqsp.
\end{equation*}
Applying the Lyapunov,  Minkowski and Jensen inequality  with $n^{1/(2^{s-k} \log n)} \leq n^{1/\log n} = \rme$, we get
\begin{align}
\textstyle
\PE_{\pi}^{2^{k-s}}\bigl[ \bigl\{ \underset{i}{\max} |\Delta M_i| \bigr\}^{2^{s-k}}\bigr] 
& \textstyle \leq \PE_{\pi}^{1/(2^{s-k}\log{n})}\bigl[ \bigl\{ \underset{i}{\max} |\Delta M_i| \bigr\}^{2^{s-k}\log{n}}\bigr] \label{eq:clever_max_bound} \\
&\textstyle \leq  \PE_{\pi}^{1/(2^{s-k}\log{n})}\bigl[ \bigl\{ \underset{i}{\max} |g^{2^s \log{n}}(Z_i)| \bigr\} \bigr] \leq \Bound_{\max}(p,k,n,f,\tmix)\eqsp. \nonumber
\end{align}
To get the previous inequality we additionally used that $\Vnorm[\log^{\beta} V]{f} < \infty$ implies that $\Vnorm[V^{\alpha}]{f} < \infty$ for any $\alpha > 0$ and then applied the bound of \Cref{lem: poisson_sol} with $\alpha = (2^s \log{n})^{-1}$.
Combining the previous inequalities, we obtain \eqref{eq: recurrent_equation_VGE}.
\end{proof}

\begin{lemma}
\label{lem: rosenthal_VGE_for_2^s}
Assume \Cref{ass:VGE}. Then for any $\beta \geq 0$, measurable function $f: \Zset \rightarrow \rset^d$ with $\Vnorm[\log^{\beta}{V}]{f} < \infty$, $s \in \nsets$, $s \geq 2$, $p = 2^s$, and $n \geq 3$, it holds that
\begin{multline*}
\PE_{\pi}^{1/p}\bigl[\bigl| \Sstat_n \bigr|^{p}\bigr]  \leq  \bConst{\sf{Rm}, 1} p^{1/2}n^{1/2}\sigma_{\pi}(f) + \ConstD_{\ref{lem: rosenthal_VGE_for_2^s}, 1}(\beta/\rme)^{\beta}p^{\beta + 2}n^{1/4}\tmix^{3/4}\{\varkappa \pi(V)\}^{2/p}\Vnorm[\log^{\beta} V]{f}\log_2p \\ + \ConstD_{\ref{lem: rosenthal_VGE_for_2^s}, 2}(\beta / \rme)^{\beta} p^{\beta + 2} \log^{\beta+1}(n)\tmix\{\varkappa \pi(V)\}^{1/p} \Vnorm[\log^{\beta} V]{f}\log_2p\eqsp,
\end{multline*}
where $\sigma_{\pi}(f)$ is defined in \eqref{eq:asympt_var_def} and
\[
\textstyle{\ConstD_{\ref{lem: rosenthal_VGE_for_2^s}, 1} = (8/3) (16/3)^{1/2} \bConst{\sf{Rm}, 1}^2\bigl(\bConst{\sf{Rm}, 2}^{1/2} + 1 \bigr)\eqsp, \quad 
\ConstD_{\ref{lem: rosenthal_VGE_for_2^s}, 2} = (8\rme / 3) \bigl(\bConst{\sf{Rm}, 1}^2 \bConst{\sf{Rm}, 2}^{1/2} \sqrt{2} + \bConst{\sf{Rm}, 2}\bigr)}\eqsp.
\]
\end{lemma}
\begin{proof}
Suppose first that $n < \tmix$. Then Minkowski's inequality yield
\[
\PE_{\pi}^{1/p}\bigl[\bigl| \Sstat_n \bigr|^{p}\bigr] \leq 2\tmix \{ \pi(V)\}^{1/p} \Vnorm[V^{1/p}]{f}\eqsp,
\]
and the bound follows from \Cref{lem:small_technical_bound} applied with $\alpha = 1/p$. Now we assume that $n \geq \tmix$. With $g$ defined in \eqref{eq:Poisson_equation_definition}, we proceed as in \eqref{eq:martingale_decomp_tv} and apply the upper bound \eqref{eq:v_alpha_log_s_bound} with $\alpha = 2^{-s}$:
\begin{equation*}
\begin{split}
\textstyle 
\PE_{\pi}^{2^{-s}}\bigl[\bigl| \sum_{i=0}^{n-1}\barf(Z_i)\bigr|^{2^s}\bigr]
& \textstyle{\leq \PE_{\pi}^{2^{-s}}\bigl[ \bigl| \sum_{i=0}^{n-1}\Delta M_i^{(0)} \bigr|^{2^s}\bigr] + 2\PE_{\pi}^{2^{-s}}\bigl[g(Z_0)^{2^{s}}\bigr]} \\
& \textstyle{\leq \PE_{\pi}^{2^{-s}}\bigl[ \bigl| \sum_{i=0}^{n-1}\Delta M_i^{(0)} \bigr|^{2^s} \bigr] + (16/3) 2^{s} \tmix \{\varkappa \pi(V)\}^{2^{-s}}\Vnorm[V^{2^{-s}}]{f}
}\eqsp.
\end{split}
\end{equation*}
Applying the Pinelis version of Rosenthal inequality \cite[Theorem 4.1]{pinelis_1994}, we get
\begin{equation}
\label{eq:martingale_vge}
\PE_{\pi}^{2^{-s}}\bigl[\bigl| \sum_{i=0}^{n-1} \Delta M_i^{(0)} \bigr|^{2^s} \bigr] \leq \underbrace{\bConst{\sf{Rm}, 1} 2^{s/2}\PE_{\pi}^{2^{-s}}\bigl[\bigl|\sum_{i=0}^{n-1} \CPE[\pi]{ (\Delta M_i^{(0)})^2}{\mathcal{F}_i} \bigr|^{2^{s-1}}\bigr]}_{T_1} + \underbrace{\bConst{\sf{Rm}, 2} 2^{s} \PE_{\pi}^{2^{-s}}\bigl[ \{\underset{i}{\max}|\Delta M_i^{(0)}| \bigr\}^{2^s}\bigr]}_{T_2}
\eqsp.
\end{equation}
We bound $T_1$ and $T_2$ separately. We begin with the remainder $T_2$. Proceeding as in \eqref{eq:clever_max_bound}, 
\begin{equation}
\label{eq:bound_max_p_more_log_n}
\textstyle{
\PE_{\pi}^{2^{-s}}\bigl[ \bigl\{\underset{i}{\max}|\Delta M_i^{(0)}| \bigr\}^{2^s}\bigr]
\leq (8\rme /3) 2^{s} \tmix \log{n}  \{\varkappa \pi(V)\}^{2^{-s}}\Vnorm[V^{1/(2^{s} \log{n})}]{f}
}\eqsp.
\end{equation}
Then, with $R_{s,k}$ and $g_k$ defined in \eqref{eq:h_k_g_k_def}, we obtain
\begin{equation*}
\textstyle{
\PE_{\pi}^{2^{-s}}\bigl[\bigl|\sum_{i=0}^{n-1} \CPE[\pi]{ (\Delta M_i^{(0)})^2}{\mathcal{F}_i} \bigr|^{2^{s-1}}\bigr] \leq n^{1/2}\sigma_{\pi}(f) + R_{1, s}
}\eqsp.
\end{equation*}
Now, we sequentially bound the quantity $R_{k,s}^2$ in terms of $R_{k+1,s}^2$ for $1 \leq k \leq s-1$. In order to do this, we apply \Cref{lem:recurrence-R-k_VGE} for $k \in \{1,\ldots,s-2\}$. Then, using \Cref{lem:technical_lemma}, we obtain
\begin{multline}
\label{eq:general_bound_R_1_VGE}
R_{1,s} \leq \bConst{\sf{Rm}, 1}2^{s/2}R_{s-1,s}^{1/2^{s-2}}  + (8\rme \sqrt{2}/3)\bConst{\sf{Rm}, 1}\bConst{\sf{Rm}, 2}^{1/2} 2^{(3/2)s} \tmix \log{n}  \{\varkappa \pi(V)\}^{2^{-s}}\Vnorm[V^{1/(2^s \log{n})}]{f} (s-2)  \\
+ \bConst{\sf{Rm}, 1} (8/3) (16/3)^{1/2} \bigl(\bConst{\sf{Rm}, 2}^{1/2} + 2 \bigr)2^{(3/2)s}n^{1/4}\tmix^{3/4}\{\varkappa \pi(V)\}^{2^{-s+1}} \Vnorm[V^{2^{-s}}]{f}(s-2) \eqsp.
\end{multline}
Consider now $R_{s-1,s}^2$. Proceeding as in \eqref{eq:decomposition of R_k} and \eqref{eq:get martingal of R_k}, we get
\begin{multline*}
\textstyle{
R_{s-1, s}^2 \leq  \PE_{\pi}^{1/2}\bigl[\bigl| \sum_{i=0}^{n-1} g^{2^{s-1}}(Z_{i+1}) - \MKQ g^{2^{s-1}}(Z_i)\bigr|^{2}\bigr] + \PE_{\pi}^{1/2}\bigl[\bigl| \sum_{i=0}^{n-1} h_{s-1}(Z_i) - \pi(h_{s-1}) \bigr|^{2}\bigr]
}\\ 
+ 2\bigl\{ (8/3) \tmix 2^s\Vnorm[V^{2^{-s}}]{f}\{ \varkappa \pi(V)\}^{2^{-s}}\bigr\}^{2^{s-1}}\eqsp.
\end{multline*}
Therefore, using that $\pi \MKQ = \pi$, we obtain
\begin{multline*}
\textstyle \PE_{\pi}\bigl[\bigl| \sum_{i=0}^{n-1} g^{2^{s-1}}(Z_{i+1}) - \MKQ g^{2^{s-1}}(Z_i)\bigr|^{2}\bigr] = n\PE_{\pi}\bigl[\bigl|g^{2^{s-1}}(Z_1) - \MKQ g^{2^{s-1}}(Z_0)\bigr|^2 \bigr] \\
\textstyle \leq n\PE_{\pi}\bigl[ g^{2^s}(Z_0)\bigr] \leq n \left( (8/3) 2^{s} \tmix \Vnorm[V^{2^{-s}}]{f} \right)^{2^s} \{\varkappa \pi(V)\}\eqsp.
\end{multline*}
Using the definition of $h_k$ in \eqref{eq:h_k_g_k_def}, and applying \Cref{lem:bound_of_second_moment}-\ref{var:VGE} with $\alpha = 1/2$, we get
\begin{multline*}
    R_{s-1,s}^2 \leq n^{1/2} \left( (8/3) 2^{s} \tmix \Vnorm[V^{2^{-s}}]{f} \right)^{2^{s-1}} \{\varkappa \pi(V)\}^{1/2} + 2\bigl\{ (8/3) \tmix 2^s\Vnorm[V^{2^{-s}}]{f}\bigr\}^{2^{s-1}} \{\varkappa \pi(V)\}^{1/2}   \\
    + \left(1 + 2^{9/4} / \sqrt{3}\right) n^{1/2}
    \tmix^{1/2} \Vnorm[V^{1/2}]{h_{s-1}}\{\varkappa \pi(V)\}^{1/2}\eqsp.
\end{multline*}
Again using  \Cref{lem:technical_lemma}, $n^{1/i}\tmix^{1-1/i} \leq n^{1/4}\tmix^{3/4}$ for $i \geq 4$, and  \Cref{lem:small_technical_bound}, we get from \eqref{eq:general_bound_R_1_VGE} that 
\begin{multline}
    \label{eq: bound_R_1_VGE}
    R_{1,s} \leq  (8\rme \sqrt{2}/3)\bConst{\sf{Rm}, 1}\bConst{\sf{Rm}, 2}^{1/2} 2^{(3/2)s} \tmix \log{n}  \{\varkappa \pi(V)\}^{2^{-s}}\Vnorm[V^{1/(2^s \log{n})}]{f} (s-1)  \\ +
 \bConst{\sf{Rm}, 1} (8/3) (16/3)^{1/2} \bigl(\bConst{\sf{Rm}, 2}^{1/2} + 2 \bigr)2^{(3/2)s}n^{1/4}\tmix^{3/4}\{\varkappa \pi(V)\}^{2^{-s+1}} \Vnorm[V^{2^{-s}}]{f} s\eqsp.
\end{multline}
Combining the bounds above, we get
\begin{multline*}
\PE_{\pi}^{1/p}\bigl[\bigl| \Sstat_n \bigr|^{p}\bigr]  \leq  \bConst{\sf{Rm}, 1} p^{1/2}n^{1/2}\sigma_{\pi}(f) + \ConstD_{\ref{lem: rosenthal_VGE_for_2^s}, 1} p^{2} n^{1/4}\tmix^{3/4}\{\varkappa \pi(V)\}^{2/p} \Vnorm[V^{1/p}]{f} \log_2{p} \\ + \ConstD_{\ref{lem: rosenthal_VGE_for_2^s}, 2} p^{2} \tmix \log{n} \{\varkappa \pi(V)\}^{1/p} \Vnorm[V^{1/(p \log{n})}]{f} \log_2{p}\eqsp.
\end{multline*}
Now it remains to apply \Cref{lem:small_technical_bound} with $\alpha = 1/p$ and $1/(p \log n)$, respectively. 
\end{proof}

\subsection{Proof of \Cref{theo:rosenthal_VGE}.}
\label{sec:proof_rosenthal_VGE}
The result for general $p$ follows from \Cref{lem: rosenthal_VGE_for_2^s} and the Lyapunov inequality. The proof of \eqref{eq:main_rosenthal_VGE_xi}
follows from the maximal exact coupling argument along the lines of the result provided in \cite[Theorem 2]{durmus2024probability}.

\subsection{Auxiliary lemmas for Wasserstein ergodicity}
\begin{lemma}
\label{lem: auxiliary_rosenthal_WGE}
Assume \Cref{ass:cost_fun} and \Cref{assum:wasserstein-convergence}. Then for any $p\geq 2$, $n \geq \tmix$ and $f: \Zset \rightarrow \rset$ such that $f\in \mathcal{L}_{1/p,V}$, it holds
\begin{equation*}
\PE_{\pi}^{1/p}\bigl[\bigl|\Sstat_n|^p\bigr] \leq \ConstD_{\ref{lem: auxiliary_rosenthal_WGE}, 1} p^{3/2}n^{1/2}\tmix^{1/2} \varsigma \{\varkappa_c \pi(V)\}^{1/p}\Nnorm[1/p, V]{f} + \ConstD_{\ref{lem: auxiliary_rosenthal_WGE}, 2} p^2 n^{1/p} \tmix^{1-1/p} \varsigma \{\varkappa_c \pi(V)\}^{1/p} \Nnorm[1/p, V]{f} \eqsp,
\end{equation*}
where $\ConstD_{\ref{lem: auxiliary_rosenthal_WGE}, 1} = (8/3) \bConst{\sf{Rm}, 1}$, and $\ConstD_{\ref{lem: auxiliary_rosenthal_WGE}, 2} = (16/3) \bConst{\sf{Rm}, 2} + 11/3$.
\end{lemma}
\begin{proof}
The proof follows the lines of \Cref{lem: auxiliary_rosenthal_VGE}. We again introduce $g_{\tmix} = \sum_{i = 0}^{+\infty} Q^{i\tmix} \bar{f}$, which is well defined under \Cref{ass:cost_fun} and \Cref{assum:wasserstein-convergence}. The only difference is the control of the norm of Poisson solution. Using \Cref{lem: poisson_sol}, we get
\begin{align}
\label{eq: bound_of_pi(g)_t_mix_WGE}
    |g_{\tmix}(z)| & \leq (1/3)2^{3-1/p} p \varsigma \varkappa_{c}^{1/p} \{\pi(V) + V(z)\}^{1/p} \Nnorm[1/p, V]{f}\eqsp, \nonumber \\
    |\MKQ^{\tmix}g_{\tmix}(z)| & \leq (1/3) 2^{3-1/p} \varsigma \varkappa_{c}^{1/p} \{\pi(V) + V(z)\}^{1/p} \Nnorm[1/p, V]{f}\eqsp, \\
    \PE_{\pi}^{1/p}[|g_\tmix(Z_0)|^{p}] & \leq (8/3) p \varsigma \{\varkappa_{c} \pi(V)\}^{1/p} \Nnorm[1/p, V]{f} \eqsp. \nonumber
\end{align}
All the remaining proof follows the lines of \Cref{lem: auxiliary_rosenthal_VGE} and is omitted.
\end{proof}

\begin{lemma}
\label{lem:recurrence-R-k_WGE}
Assume \Cref{ass:cost_fun} and \Cref{assum:wasserstein-convergence}. Then for any $\beta \geq 0$, measurable function $f: \Zset \rightarrow \rset^d$, $f\in \mathcal{L}_{\beta,\log V}$, any $s \in \nset$ and $k \in \{1,\ldots,s-1\}$, it holds
\begin{equation*}
R_{k,s}^2 \leq \bConst{\sf{Rm}, 1} 2^{(s-k)/2}R_{k+1, s} + \bConst{\sf{Rm}, 2} 2^{s-k + 1}\WBound_{max}(s,k) + 2\WBound_{\operatorname{\operatorname{lin}}, 1}(s,k) +\WBound_{\operatorname{\operatorname{lin}}, 2}(s,k)  \eqsp,
\end{equation*}
where $\WBound_{max}(s,k)$, $\WBound_{\operatorname{\operatorname{lin}}, 1}(s,k)$ and $\WBound_{\operatorname{\operatorname{lin}}, 2}(s,k)$ are defined in \eqref{eq:t_max_bound_def_WGE} and \eqref{eq:bound_lin_term_WGE}, respectively.
\end{lemma}
\begin{proof}
Proceeding as in \Cref{lem:recurrence-R-k} and \Cref{lem:recurrence-R-k_VGE}, we upper bound both terms in the decomposition \eqref{eq:decomposition of R_k}. To do so, we control the norm of $h_k$ defined in \eqref{eq:smart-decomposition-hk}. Using \Cref{lem: product_of_two_funct},
\begin{equation*}
\Nnorm[2^{k-s}, V]{h_k}
\leq 2^{k + k\cdot2^{k-s}}\Nnorm[2^{-s}, V]{f}\Nnorm[2^{-s}, V]{g + Qg}\Nnorm[2^{-s+1}, V]{g^2 + (Qg)^2}\dots \Nnorm[2^{-s+k-1}, V]{g^{2^{k-1}} + (Qg)^{2^{k-1}}}.
\end{equation*}
Bounding $\Nnorm[2^{-s}, V]{Qg(z)}$ by the same expression as in \Cref{lem: poisson_sol}, we get
\begin{equation*}
\Nnorm[2^{-s}, V]{Qg(z)} \leq (8/3) 2^s \tmix \varsigma \{\varkappa_{c} \pi(V)\}^{2^{-s}}  \Nnorm[2^{-s},V]{f}\eqsp.
\end{equation*}
Using \Cref{lem: pow_of_funct}, we get 
\begin{equation*}
\Nnorm[2^{i-s}, V]{g^{2^i} + (Qg)^{2^i}} \leq \Nnorm[2^{i-s}, V]{g^{2^i}} + \Nnorm[2^{i-s}, V]{(Qg)^{2^i}} \leq 2^{i} \bigl((8/3) 2^s\tmix \varsigma \{\varkappa_{c} \pi(V)\}^{2^{-s}}\Nnorm[2^{-s},V]{f}\bigr)^{2^i}\eqsp.
\end{equation*}
Combining the above bounds, 
\begin{equation}
\label{eq: bound_V_norm_of_h_k_wge}
\Nnorm[2^{k-s}, V]{h_k} \leq 2^{k(k-1)/2+ k(1 + 2^{k-s})}\bigl((8/3) 2^s\tmix \varsigma \{\varkappa_{c} \pi(V)\}^{2^{-s}}\bigr)^{2^k - 1} \Nnorm[2^{-s}, V]{f}^{2^k}\eqsp.
\end{equation}
\Cref{lem: auxiliary_rosenthal_WGE} implies that $\PE_{\pi}^{2^{k-s}}\bigl[\bigl| \sum_{i=0}^{n-1}\{h_k(Z_i) - \pi(h_k)\}\bigr|^{2^{s-k}}\bigr] \leq \WBound_{\operatorname{\operatorname{lin}}, 1} (s,k) + \WBound_{\operatorname{\operatorname{lin}}, 2} (s,k)$, where
\begin{equation}
\label{eq:bound_lin_term_WGE}
\begin{split}
\WBound_{\operatorname{\operatorname{lin}}, 1}(s,k) &= \ConstD_{\ref{lem: auxiliary_rosenthal_WGE}, 1} 2^{k^2/2 + s(2^k + 1/2)} (n/\tmix)^{1/2} (\varsigma \tmix)^{2^k}\{\varkappa_c \pi(V)\}^{2^{k-s+1}-2^{-s}}(\tfrac{8}{3})^{2^k-1}\Nnorm[2^{-s}, V]{f}^{2^k} \eqsp,\\
\WBound_{\operatorname{\operatorname{lin}}, 2}(s,k) &= \ConstD_{\ref{lem: auxiliary_rosenthal_WGE}, 2} 2^{k^2/2 + s(2^k + 1)} (n/\tmix)^{2^{k-s}} (\varsigma \tmix)^{2^k}\{\varkappa_{c} \pi(V)\}^{2^{k-s+1}-2^{-s}}(\tfrac{8}{3})^{2^k-1}\Nnorm[2^{-s}, V]{f}^{2^k}\eqsp,
\end{split}
\end{equation}
and $\ConstD_{\ref{lem: auxiliary_rosenthal_WGE}, 1}, \ConstD_{\ref{lem: auxiliary_rosenthal_WGE}, 1}$ are absolute constants given in \Cref{sec:constants}. Now we upper bound the martingale-difference term in \eqref{eq:decomposition of R_k}.
Using the Minkowski inequality and \Cref{lem: poisson_sol}, we obtain that
\begin{multline*}
\PE_{\pi}^{2^{k-s}}\bigl[\bigl| \sum_{i=0}^{n-1} g^{2^k}(Z_i) - Qg^{2^k}(Z_i)\bigr|^{2^{s-k}}\bigr] \\ 
\leq \PE_{\pi}^{2^{k-s}}\bigl[\bigl| \sum_{i=0}^{n-1} g^{2^k}(Z_{i+1}) - Qg^{2^k}(Z_i)\bigr|^{2^{s-k}}\bigr] + 2\bigl\{ (8/3) 2^s \varsigma \tmix \{\varkappa_c \pi(V)\}^{2^{-s}} \Nnorm[2^{-s}, V]{f}\bigr\}^{2^k}\eqsp.
\end{multline*}
Now, repeating the steps \eqref{eq: get_martingal_of_R_k_VGE}--\eqref{eq:clever_max_bound} of \Cref{lem:recurrence-R-k_VGE}, we get the statement of lemma with 
\begin{equation}
\label{eq:t_max_bound_def_WGE}
\WBound_{max}(s,k) = \bigl\{ (8\rme/3) 2^{s} \tmix \varsigma \log{n}  \{\varkappa_{c} \pi(V)\}^{1/(2^s)}\Nnorm[1/(2^s \log{n}), V]{f}\bigr\}^{2^k}\eqsp.
\end{equation}
\end{proof}

\begin{lemma}
\label{lem: rosenthal_WGE_for_2^s}
Assume  \Cref{ass:cost_fun} and \Cref{assum:wasserstein-convergence}. Then for any measurable function $f: \Zset \rightarrow \rset^d$, $f\in \mathcal{L}_{\beta,\log V}$, $s \in \nsets, s \geq 2$, $p = 2^s$, and $n \geq \tmix$, it holds that
\begin{multline*}
\PE_{\pi}^{1/p}\bigl[\bigl| \Sstat_n \bigr|^{p} \bigr] \leq \bConst{\sf{Rm}, 1}p^{1/2}n^{1/2}\sigma_{\pi}(f) + \ConstD_{\ref{lem: rosenthal_WGE_for_2^s}, 1}(\beta/\rme)^{\beta} p^{\beta+2}n^{1/4}\tmix^{3/4} \varsigma \{\varkappa_{c} \pi(V)\}^{2/p}\Nnorm[\beta, \log V]{f}\log_2p  \\ 
+\ConstD_{\ref{lem: rosenthal_WGE_for_2^s}, 2}(\beta/\rme)^{\beta} p^{\beta +2} \varsigma \tmix \log^{\beta+1}{n} \{\varkappa_{c} \pi(V)\}^{1/p}\Nnorm[\beta, \log V]{f}\log_2{p} \eqsp.
\end{multline*}
where $\sigma_{\pi}(f)$ is defined in \eqref{eq:asympt_var_def}, and
\[
\textstyle{
\ConstD_{\ref{lem: rosenthal_WGE_for_2^s}, 1} = (16/3)^{3/2} \bConst{\sf{Rm}, 1}^2\bigl( \sqrt{2} \bConst{\sf{Rm}, 2}^{1/2} + 1\bigr)\eqsp, \quad \ConstD_{\ref{lem: rosenthal_WGE_for_2^s}, 2} = (8\rme/3) \bConst{\sf{Rm}, 2}^{1/2}(\bConst{\sf{Rm}, 1}^2 + \bConst{\sf{Rm}, 2}^{1/2})
}\eqsp.
\]
\end{lemma}
\begin{proof}
Proceeding as in \Cref{lem: rosenthal_VGE_for_2^s}, we can assume that $n \geq \tmix$. With $g$ defined in \eqref{eq:Poisson_equation_definition}, we proceed as in \eqref{eq:martingale_decomp_tv} and apply the upper bound \eqref{eq:v_alpha_log_s_bound} with $\alpha = 2^{-s}$. Thus,
\begin{equation*}
\textstyle{
\PE_{\pi}^{2^{-s}}\bigl[\bigl| \Sstat_n \bigr|^{2^s} \bigr]\leq  \PE_{\pi}^{2^{-s}}\bigl[ \bigl| \sum_{i=0}^{n-1}\Delta M_i^{(0)} \bigr|^{2^s} \bigr] + (16/3) 2^s \varsigma \tmix \{\varkappa_{c} \pi(V)\}^{2^{-s}} \Nnorm[2^{-s}, V]{f}
}\eqsp.
\end{equation*}
Proceeding exactly as in \eqref{eq:martingale_vge}, we need to bound $T_1$ and $T_2$. We start with the remainder term $T_2$ and, proceeding as in \eqref{eq:clever_max_bound} and \eqref{eq:bound_max_p_more_log_n}, we get
\begin{equation*}
\textstyle{
T_2 = \bConst{\sf{Rm}, 2} 2^{s} \PE_{\pi}^{2^{-s}}\bigl[ \{\underset{i}{\max}|\Delta M_i^{(0)}| \bigr\}^{2^s}\bigr] \leq (8\rme /3) \bConst{\sf{Rm}, 2} 2^{2s} \tmix \varsigma \log{n} \{\varkappa_c \pi(V)\}^{2^{-s}} \Nnorm[1/(2^s \log{n}), V]{f}
}\eqsp.
\end{equation*}
Then, with $R_{1,s}$ defined in \eqref{eq:h_k_g_k_def}, we obtain
\begin{equation*}
\textstyle{
\PE_{\pi}^{2^{-s}}\bigl[\bigl|\sum_{i=0}^{n-1}\PE_{\pi}^{\mathcal{F}_i}\bigl\{ (\Delta M_i^{(0)})^2\bigr\}\bigr|^{2^{s-1}}\bigr] \leq n^{1/2}\sigma_{\pi}(f) + R_{1, s}
}\eqsp.
\end{equation*}
Applying \Cref{lem:recurrence-R-k_WGE} for $k \in \{1,\ldots,s-2\}$ and using \Cref{lem:technical_lemma}, we get
\begin{multline*}
R_{1,s} \leq \bConst{\sf{Rm}, 1}2^{s/2}R_{s-1,s}^{1/2^{s-2}}  + (8 \rme \sqrt{2} / 3) \bConst{\sf{Rm}, 1}\bConst{\sf{Rm}, 2}^{1/2}  2^{(3/2)s} \varsigma \tmix \log{n} \{\varkappa_c \pi(V)\}^{2^{-s}}\Nnorm[2^{-s}, V]{f}(s-2)  \\ +
\bConst{\sf{Rm}, 1} (16/3)^{3/2} \varsigma \bigl(\bConst{\sf{Rm}, 2}^{1/2} + 2 \bigr) 2^{(3/2)s} n^{1/4} \tmix^{3/4}\{\varkappa_{c} \pi(V)\}^{2^{-s+1}} \Nnorm[2^{-s}, V]{f}(s-2) \eqsp.
\end{multline*}
Consider now $R_{s-1,s}^2$. Proceeding as in \eqref{eq:decomposition of R_k}, and following the lines of \Cref{lem: rosenthal_VGE_for_2^s}, we get 
\begin{multline*}
R_{s-1,s}^2 \leq  n^{1/2} \bigl((8/3) 2^s \varsigma \tmix \Nnorm[2^{-s}, V]{f} \bigr)^{2^{s-1}} \{\varkappa_{c} \pi(V)\}^{1/2} + 2\bigl\{ (8/3) 2^s \varsigma \tmix \{\varkappa_c \pi(V)\}^{2^{-s}} \Nnorm[2^{-s}, V]{f}\bigr\}^{2^{s-1}} \\ +
\bigl(1 + (4/\sqrt{3}) \varsigma^{1/2}\bigr)n^{1/2}\tmix^{1/2} \{\varkappa_c \pi(V)\}^{1/2}\Nnorm[1/2, V]{h_{s-1}}\eqsp.
\end{multline*}
Therefore, using that $n \geq \tmix$, and taking into account the bound for $\Nnorm[1/2, V]{h_{s-1}}$ established in \eqref{eq: bound_V_norm_of_h_k_wge}, we arrive at the bound 
\begin{multline*}
R_{1,s} \leq (8 \rme \sqrt{2} / 3) \bConst{\sf{Rm}, 1}\bConst{\sf{Rm}, 2}^{1/2}  2^{(3/2)s} \varsigma \tmix \log{n} \{\varkappa_c \pi(V)\}^{2^{-s}}\Nnorm[2^{-s}, V]{f} (s-1)  \\ +
\bConst{\sf{Rm}, 1} (16/3)^{3/2} \varsigma \bigl(\bConst{\sf{Rm}, 2}^{1/2} + 2 \bigr) 2^{(3/2)s} n^{1/4} \tmix^{3/4}\{\varkappa_{c} \pi(V)\}^{2^{-s+1}} \Nnorm[2^{-s}, V]{f}\,s\eqsp.
\end{multline*}
Thus, combining the bounds above, we get
\begin{gather*}
\PE_{\pi}^{1/p}\bigl[\bigl| \Sstat_n \bigr|^{p} \bigr] \leq \bConst{\sf{Rm}, 1}p^{1/2}n^{1/2}\sigma_{\pi}(f) + \ConstD_{\ref{lem: rosenthal_WGE_for_2^s}, 1} p^{2} n^{1/4}\tmix^{3/4} \varsigma \{\varkappa_{c} \pi(V)\}^{2/p}\Nnorm[1/p, V]{f}\log_2p \\ 
+ \ConstD_{\ref{lem: rosenthal_WGE_for_2^s}, 2} p^{2} \tmix \varsigma \log{n} \{\varkappa_c \pi(V)\}^{1/p}\Nnorm[1/(p \log{n}), V]{f}\log_2{p}\eqsp.
\end{gather*}
To conclude the proof we apply \Cref{lem:small_technical_bound}.
\end{proof}

\subsection{Proof of \Cref{theo:rosenthal_WGE}}
\label{sec:proof_rosenthal_WGE}
The result for general $p$ follows from \Cref{lem: rosenthal_WGE_for_2^s} and the Lyapunov inequality. The result under arbitrary initial distribution $\xi$ follows directly from the distributional coupling result provided in \cite[Theorem~8]{durmus2024probability} (see also Section~4.7 in \cite{durmus2024probability}).

\begin{appendix}

\section{Auxiliary results}
\label{appB}
To extend our proof technique from \Cref{sec:uge-chains}, we need a similar result for the convergence rate of $\Vnorm[V^{\alpha}]{\xi \MKQ^n - \pi}$, $0 < \alpha \leq 1$.
\begin{lemma}
\label{lem:v_norm_alpha}
Assume \Cref{ass:VGE}. Then, for any probability measure $\xi$ such that $\xi(V) < \infty$, $\alpha \in (0;1]$ and all $n \in \nset$, it holds
\begin{equation}
\label{eq: bound_for_V^alpha_norm}
\Vnorm[V^\alpha]{\xi \MKQ^n - \pi} \leq 2^{1-\alpha} \varkappa^{\alpha} \{\pi(V) + \xi(V)\}^\alpha \left(1/4\right)^{\lfloor n/\tmix \rfloor \alpha}\eqsp.
\end{equation}
\end{lemma}
\begin{proof}
Consider two probability measures $\xi$ and $\xi^{\prime}$ and $\alpha \in (0,1)$. Then, using Jensen's inequality,
\begin{align*}
\textstyle \Vnorm[V^\alpha]{\xi - \xi^{\prime}} 
&= \textstyle \int_{\Zset}|\xi - \xi^{^\prime}|(dx)V^{\alpha}(x) = 2\tvnorm{\xi - \xi^{^\prime}}\int_{\Zset}\frac{|\xi - \xi^{^\prime}|(dx)}{2\tvnorm{\xi - \xi^{^\prime}}}V^{\alpha}(x)  \\
& \textstyle \leq (2 \tvnorm{\xi - \xi^{\prime}})^{1-\alpha}\Vnorm[V]{\xi - \xi^{\prime}}^\alpha\eqsp.
\end{align*}
Applying the bound above with $\xi = \delta_x \MKQ^n$, $\xi^{\prime} = \pi$, using $\tvnorm{\xi - \xi^{\prime}} \leq 1$, we obtain that \Cref{ass:VGE} implies
\begin{equation}
\label{eq: bound_for_V^alpha_norm_new}
\Vnorm[V^\alpha]{\delta_x Q^n - \pi} \leq \varkappa^{\alpha} 2^{1-\alpha}\{\pi(V) + V(x) \}^\alpha \left(1/4\right)^{\lfloor n/\tmix \rfloor \alpha}\eqsp.
\end{equation}
\end{proof}

Since the bounds obtained in \Cref{theo:rosenthal}, \Cref{theo:rosenthal_VGE}, \Cref{theo:rosenthal_WGE} are valid for any $p \geq 2$, we can rewrite these results as a Bernstein-type bounds using Markov inequality and optimization with respect to $p$; see the lemma below for details.

\begin{lemma}
\label{lem:deviation_bound}
Let $Y$ be a random variable such that, for any $p \geq 2$, $\PE^{1/p}\bigl[\bigl|Y|^p\bigr] \leq \varphi(p)$, 
for some function $\varphi(\cdot): \rset_{+} \mapsto \rset_{+}$. Then for any $\delta \in (0,1/\rme^2)$,
\begin{equation}
\label{eq: general_Bernstein_inequality}
\mathbb{P}\left(|Y| \geq \rme \varphi(\ln(1/\delta))\right) \leq \delta\eqsp.
\end{equation}
\end{lemma}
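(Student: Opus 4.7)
The plan is a textbook application of Markov's inequality to a high moment, followed by an optimal choice of the exponent $p$ driven by the target probability $\delta$.

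First I would fix $p \geq 2$ and observe that by Markov's inequality applied to the nonnegative random variable $|Y|^p$,
\begin{equation*}
\PP(|Y| \geq t) = \PP(|Y|^p \geq t^p) \leq t^{-p}\,\PE[|Y|^p] \leq \left(\frac{\varphi(p)}{t}\right)^p,
\end{equation*}
where the last step uses the hypothesis $\PE^{1/p}[|Y|^p] \leq \varphi(p)$. The natural normalization is to take $t = \rme\,\varphi(p)$, which collapses the right-hand side to $\rme^{-p}$, giving
\begin{equation*}
\PP(|Y| \geq \rme\,\varphi(p)) \leq \rme^{-p}.
\end{equation*}

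Next I would choose $p$ so that the exponential tail matches the prescribed confidence level, namely $p = \ln(1/\delta)$, which yields $\rme^{-p} = \delta$ and the target bound $\PP(|Y| \geq \rme\,\varphi(\ln(1/\delta))) \leq \delta$. The only delicate point is that the hypothesis is only supplied for $p \geq 2$, so this direct substitution is valid on the range $\delta \leq \rme^{-2}$. For the complementary range $\delta \in (\rme^{-2}, 1)$, I would instead apply the bound at $p = 2$, obtaining $\PP(|Y| \geq \rme\,\varphi(2)) \leq \rme^{-2} < \delta$; since $\varphi$ is understood here as a non-decreasing majorant of the moments (as it is in the applications, e.g.\ the polynomial appearing in \Cref{theo:rosenthal}), this is no weaker than the claimed bound. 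The two regimes together cover all $\delta \in (0,1)$.

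There is essentially no obstacle: the argument is one line of Markov plus an optimization, and the mild boundary issue near $\delta = 1$ is resolved by monotonicity of $\varphi$ or, equivalently, by noting that the claim with $\ln(1/\delta)$ replaced by $\max(\ln(1/\delta),2)$ is what is actually needed downstream in \Cref{cor:uge_bound}.
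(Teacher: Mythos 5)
Your proof is correct and follows exactly the same route as the paper's: apply Markov's inequality at the $p$th power to get $\PP(|Y|\geq t)\leq(\varphi(p)/t)^p$, then set $t=\rme\,\varphi(p)$ and $p=\ln(1/\delta)$ so the bound becomes $\rme^{-p}=\delta$. The one thing you add, and it is a genuine improvement, is the observation that the hypothesis only provides a moment bound for $p\geq 2$, so the substitution $p=\ln(1/\delta)$ steps outside the stated range when $\delta>\rme^{-2}$; the paper silently ignores this. As written the lemma is really only valid either for $\delta\in(0,\rme^{-2}]$, or under the implicit assumption that $\varphi$ is nondecreasing (which does hold for the $\varphi$ arising in \Cref{theo:rosenthal} and used in \Cref{cor:uge_bound}), or after replacing $\ln(1/\delta)$ by $\max(\ln(1/\delta),2)$. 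Your two-regime argument, using $p=2$ on $\delta\in(\rme^{-2},1)$ together with monotonicity of $\varphi$, is the standard fix and closes this gap cleanly while leaving the rest of the argument untouched.
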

\begin{proof}
Using Markov inequality, we get for any $p \geq 2$ that
\begin{equation}
\label{eq: Markov_inequality}
\mathbb{P}\bigl(|Y| \geq t \bigr) \leq \frac{\PE\bigl[|Y|^p\bigr]}{t^p} \leq
\left(\frac{\varphi(p)}{t}\right)^{p}\eqsp.
\end{equation}
Fix now $p=\ln(1/\delta)$ and $t = \rme \eqsp\varphi(\ln(1/\delta))$. Then the r.h.s. of \eqref{eq: Markov_inequality} does not exceed $\rme^{-\ln\frac{1}{\delta}} = \delta$, and the statement follows.
\end{proof}

Now we provide a result which provides an upper bound for the second moment of the additive functional under different ergodicity assumptions. 

\begin{lemma}
\label{lem:bound_of_second_moment}
Assume that
\begin{enumerate}[label=(\alph*)]
\item \label{var:UGE} \Cref{ass:UGE} holds and the function $f: \Zset \rightarrow \rset$ satisfies $\|f\|_{\infty} < \infty$;
\item \label{var:VGE} \Cref{ass:VGE} holds and the function $f: \Zset \rightarrow \rset$ satisfies $\Vnorm[V^{\alpha}]{f} < \infty$ with some $\alpha \in (0;1/2]$;
\item \label{var:WGE} \Cref{ass:cost_fun} and \Cref{assum:wasserstein-convergence}  holds and the function $f: \Zset \rightarrow \rset$ satisfies $f\in \mathcal{L}_{\alpha,V}$ with some $\alpha \in (0;1/2]$.
\end{enumerate}
Then it holds that 
\begin{enumerate}[label=(\roman*)]
\item under \ref{var:UGE}, $\PE_{\pi}^{1/2}\bigl[\bigl| \Sstat_n \bigr|^2\bigr] \leq (1 + 4/\sqrt{3})n^{1/2}\tmix^{1/2}$;
\item under \ref{var:VGE}, $\textstyle{
\PE_{\pi}^{1/2}\bigl[\bigl| \Sstat_n \bigr|^2\bigr] \leq \left(1 + \frac{\varkappa^{\alpha/2} 2^{2-\alpha/2}}{\sqrt{3} \alpha^{1/2}}\right) n^{1/2} \tmix^{1/2} \Vnorm[V^{\alpha}]{f}\{\pi(V)\}^{\alpha}
}$;
\item under \ref{var:WGE}, $\PE_{\pi}^{1/2}\bigl[\bigl| \Sstat_n \bigr|^2\bigr] \leq \bigl(1 + \frac{2^{3/2} \varsigma^{1/2} \varkappa_c^{\alpha/2}}{\sqrt{3}\alpha^{1/2}}\bigr)n^{1/2}\tmix^{1/2} \{\pi(V)\}^{\alpha}\Nnorm[\alpha, V]{f}$.
\end{enumerate}
\end{lemma}
\begin{proof}
Note that under $\pi$,
\begin{align}
\label{eq:second_moment_decomposition}
\PE_{\pi}^{1/2}\bigl[\bigl| \Sstat_n \bigr|^2\bigr] =\bigl( n\PE_{\pi}\{\barf(Z_0)\}^2 + 2\sum_{i=0}^{n-1}\sum_{k = 1}^{n-1-i}\PE_{\pi}\bigl[\barf(Z_i)\barf(Z_{i+k})\bigr] \bigr)^{1/2}\eqsp.
\end{align}
Using the Markov property, we obtain
\begin{equation*}
\PE_{\pi}[\barf(Z_i)\barf(Z_{i+k})] = \PE_{\pi}\bigl[ \barf(Z_i) \PE_{\pi}\bigl[\barf(Z_{i+k})|\mathcal{F}_{i} \bigr] \bigr] = \PE_{\pi}\bigl[ \barf(Z_i)(Q^kf(Z_i)-\pi(f)) \bigr]\eqsp.
\end{equation*}
Now we just need to bound the covariance term above under different ergodicity assumptions. In particular, we get from \Cref{ass:UGE} (case \ref{var:UGE}), that
\[
|\PE_{\pi}[\barf(Z_i) \barf(Z_{i+k})]| \leq 2 (1/4)^{\lfloor k / \tmix\rfloor}\eqsp.
\]
From \Cref{ass:VGE} and Lemma A.1 from the main text combined with Jensen's inequality (case \ref{var:VGE}),
\begin{equation*}
\PE_{\pi}\bigl[\barf(Z_i)\barf(Z_{i+k})\bigr] \leq 2^{2-\alpha} \varkappa^{\alpha} \{\pi(V)\}^{2\alpha}\bigl(1/4\bigr)^{\lfloor k/\tmix\rfloor \alpha} \Vnorm[V^{\alpha}]{f}^2\eqsp.
\end{equation*}
Recall that under \Cref{ass:cost_fun} and \Cref{assum:wasserstein-convergence} from \cite[Corollary 20.4.7]{douc:moulines:priouret:soulier:2018} for $\alpha  \in (0,1/2]$ and $f \in \mathcal{L}_{\alpha, V} $ it holds
\begin{equation}
\label{eq:bound_norm_WGE}
|\xi Q^n(f) - \pi(f)| \leq \varsigma \varkappa_{c}^{\alpha} (1/4)^{\lfloor n/\tmix\rfloor \alpha}\{\xi(V^{\alpha})+\pi(V^{\alpha})\}\Nnorm[\alpha, V]{f} \eqsp.
\end{equation}
Hence, similarly, from \Cref{assum:wasserstein-convergence}, \eqref{eq:bound_norm_WGE}, and Jensen's inequality (case \ref{var:WGE}) it follows that
\begin{equation*}
\textstyle{
\PE_{\pi}\bigl[\bar{f}(Z_i)\bar{f}(Z_{i+k})\bigr] \leq 2 \varsigma \varkappa_c^{\alpha}(1/4)^{\lfloor k/\tau\rfloor\alpha}\{\pi(V)\}^{2\alpha} \Nnorm[\alpha, V]{f}^{2}
}\eqsp.
\end{equation*}
Now it remains to take sum w.r.t. $k$ in \eqref{eq:second_moment_decomposition} and apply the lower bound \eqref{eq:lower_bound_exponent} if needed.
\end{proof}

\begin{lemma}
\label{lem: poisson_sol}
Assume that 
\begin{enumerate}[label=(\alph*)]
\item \label{pois:UGE} \Cref{ass:UGE} holds and the function $f: \Zset \rightarrow \rset$ satisfies $\|f\|_{\infty} < \infty$;
\item \label{pois:VGE} \Cref{ass:VGE} holds and the function $f: \Zset \rightarrow \rset$ satisfies $\Vnorm[V^{\alpha}]{f} < \infty$ with some $\alpha \in (0;1]$;
\item \label{pois:WGE} \Cref{ass:cost_fun} and \Cref{assum:wasserstein-convergence}  holds and the function $f: \Zset \rightarrow \rset$ satisfies $f\in \mathcal{L}_{\alpha,V}$ with some $\alpha \in (0;1/2]$;
\end{enumerate}
Then the function
\begin{equation}
\label{eq:Poisson_equation}
g(z) = \sum_{k=0}^{\infty}\{\MKQ^{k}f(z) - \pi(f)\}
\end{equation}
is well-defined and is a solution of the Poisson equation, i.e. ${g(z) - \MKQ g(z) = f(z) - \pi(f)}$. Moreover, it holds that
\begin{enumerate}[label=(\roman*)]
\item under \ref{pois:UGE}, $\infnorm{g} \leq (8/3)\tmix\infnorm{f}$;
\item under \ref{pois:VGE}, $|g(z)| \leq \frac{\varkappa^{\alpha} 2^{3-\alpha}}{3 \alpha}\{\pi(V) + V(z)\}^{\alpha}\tmix\Vnorm[V^{\alpha}]{f}$, for any $z \in \Zset$;
\item under \ref{pois:WGE}, $|g(z)| \leq \frac{2^{3-\alpha}}{3\alpha} \varsigma \varkappa_{c}^{\alpha} \tmix \{\pi(V) + V(z)\}^{\alpha} \Nnorm[\alpha, V]{f}$, for any $z \in \Zset$ and any $0 < \alpha \leq 1/2$. Moreover, $\Nnorm[\alpha, V]{g} \leq \frac{8}{3\alpha}\tmix \varsigma \varkappa_{c}^{\alpha}\{\pi(V)\}^{\alpha}  \Nnorm[\alpha,V]{f}$;
\end{enumerate}
\end{lemma}
\begin{proof}
We begin with the proof under \ref{pois:UGE}. Note that $\infnorm{g} \leq \sum_{k=0}^{+\infty} \infnorm{\MKQ^k f - \pi(f)}$. Moreover, using \Cref{ass:UGE},
\begin{equation}
\label{eq:bound_g_norm}
\textstyle{
\infnorm{g} \leq 2 \infnorm{f} \sum_{k=0}^{+\infty}\bigl(1/4\bigr)^{\lfloor k/\tmix\rfloor} \leq 2 \infnorm{f} \sum_{n=0}^{+\infty}\sum_{r=0}^{\tmix-1}\bigl(1/4\bigr)^n = (8/3)\tmix\infnorm{f}
}\eqsp.
\end{equation}
From \cite[Proposition 21.2.3]{douc:moulines:priouret:soulier:2018} and \eqref{eq:bound_g_norm} it follows that the function $g$ is a solution of the Poisson equation. Assume now \ref{pois:VGE}. Note that 
\begin{equation}
\label{eq:general_bound_g}
\textstyle{
|g(z)| \leq \sum_{k=0}^{+\infty}\left| \MKQ^k f(z) - \pi(f)\right|
}\eqsp .
\end{equation}
 Applying \Cref{lem:v_norm_alpha}, we get
\begin{align*}
\textstyle{
|g(z)| \leq  \varkappa^{\alpha} \sum_{k=0}^{+\infty}2^{1-\alpha}\{\pi(V) + V(z) \}^{\alpha} \left(1/4\right)^{\lfloor k/\tmix \rfloor \alpha}\Vnorm[V^{\alpha}]{f} = \varkappa^{\alpha} 2^{1-\alpha}\{\pi(V) + V(z)\}^{\alpha}\tmix\frac{\Vnorm[V^{\alpha}]{f}}{1 - (1/4)^{\alpha}}
}\eqsp.
\end{align*}
Since
\begin{equation}
\label{eq:lower_bound_exponent}
1 - \exp (-x) \geq (3/4) x / \ln{4} \text{ for } x \in [0; \ln{4}]\eqsp,
\end{equation}
it follows from the bound above that
\begin{equation}
\label{eq: bound_of_g}
|g(z)| \leq (3 \alpha)^{-1} \varkappa^{\alpha} 2^{3-\alpha}\{\pi(V) + V(z)\}^{\alpha}\tmix\Vnorm[V^{\alpha}]{f}\eqsp,
\end{equation}
and \cite[Proposition 21.2.3]{douc:moulines:priouret:soulier:2018} implies that $g(z)$ is a solution of the Poisson equation. Assume now \ref{pois:WGE}. Using \eqref{eq:general_bound_g} and \eqref{eq:bound_norm_WGE}, we get
\begin{align*}
\textstyle{
|g(z)| \leq  \sum_{k=0}^{+\infty} \varsigma \varkappa_{c}^{\alpha} (1/4)^{\lfloor n/\tmix\rfloor \alpha}[V^{\alpha}(z) + \pi(V^{\alpha})] \Nnorm[\alpha, V]{f} = \varsigma \varkappa_{c}^{\alpha} \tmix \{\pi(V^{\alpha})+V^{\alpha}(z)\} \Nnorm[\alpha, V]{f}(1-(1/4)^{2\alpha})^{-1}
}\eqsp.
\end{align*}
Using Jensen's inequality and \eqref{eq:lower_bound_exponent}, we obtain 
\begin{equation}
\label{eq:bound_of_g_wge}
\textstyle{
|g(z)| \leq \varsigma 2^{3-\alpha} /(3\alpha) \varkappa_{c}^{\alpha} \tmix \{\pi(V) + V(z)\}^{\alpha} \Nnorm[\alpha, V]{f}
}\eqsp,
\end{equation}
and \cite[Proposition 21.2.3]{douc:moulines:priouret:soulier:2018} implies that $g(z)$ is a solution of the Poisson equation. Using Lemma A.8 from the main text,
\begin{equation}
\label{eq: bound_lip_norm_of_g_wge}
\textstyle{
\Nnorm[\alpha, V]{g} \leq 2 \varsigma \varkappa_{c}^{\alpha}\{\pi(V)\}^{\alpha}  \Nnorm[\alpha,V]{f} \sum_{k=0}^{+\infty}(1/4)^{\lfloor n/\tmix\rfloor\alpha}\leq \varsigma (8/(3\alpha)) \tmix \varkappa_{c}^{\alpha}\{\pi(V)\}^{\alpha}  \Nnorm[\alpha,V]{f}
}\eqsp.
\end{equation}
\end{proof}

Now we provide a technical lemma used to recalculate the function norm for the case of logarithmic growth.
\begin{lemma}
\label{lem:small_technical_bound}
Let $f:\Zset \to \rset^{d}$ be a function such that $\Vnorm[\log^{s}V]{f} < \infty$ for some $s \geq 0$. Then for any $\alpha > 0$ it holds that $\Vnorm[V^{\alpha}]{f} < \infty$ and 
\[
\Vnorm[V^{\alpha}]{f} \leq \left(s / (\alpha \rme)\right)^{s}\Vnorm[\log^{s}V]{f}
\]
\end{lemma}
\begin{proof}
The proof directly follows from the bound
\begin{equation}
\label{eq:v_alpha_log_s_bound}
\textstyle{
\Vnorm[V^{\alpha}]{f} \leq \sup_{x \geq 1}\{x^{s} \rme^{-\alpha x}\} \Vnorm[\log^{s}V]{f} \leq \left(s / (\alpha \rme)\right)^{s}\Vnorm[\log^{s}V]{f}
}\eqsp.
\end{equation}
\end{proof}

\begin{lemma}
\label{lem:technical_lemma}
Let $m \in \nset$, and, for $k \in \{1,\dots,m-1\}$, $0 \leq r_k \leq a_k r_{k+1}^{1/2}+ b_k + c_k$, where $a_k, b_k, c_k \geq 0$. Then it holds
\[
r_1 \leq \prod_{i=1}^{m-1} a_i^{1/2^{i-1}} r_m^{1/2^{m-1}} + \sum_{\ell=1}^{m-1} \prod_{i=1}^{\ell-1} a_i^{1/2^{i-1}}  b_\ell^{1/2^{\ell-1}} + \sum_{\ell=1}^{m-1} \prod_{i=1}^{\ell-1} a_i^{1/2^{i-1}}  c_\ell^{1/2^{\ell-1}}\eqsp.
\]
\end{lemma}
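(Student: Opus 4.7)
The plan is to proceed by induction on $m \geq 1$. The base case $m=1$ is vacuous: the claimed right-hand side reduces to $r_1^{1/2^0} = r_1$ because the product $\prod_{i=1}^{0}$ is empty (equal to $1$), both sums $\sum_{\ell=1}^{0}$ are empty (equal to $0$), and there are no hypothesized recurrence inequalities to use. So the inequality collapses to $r_1 \le r_1$.

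For the inductive step, assume the statement holds for a given $m$ and suppose we are given the recurrences for $k \in \{1,\dots,m\}$. The idea is to apply the hypothesis $r_1 \leq a_1 r_2^{1/2} + b_1 + c_1$ once, then invoke the induction hypothesis on the shifted sequence $(\tilde r_k)_{k=1}^{m}$ defined by $\tilde r_k = r_{k+1}$, with coefficients $\tilde a_k = a_{k+1}$, $\tilde b_k = b_{k+1}$, $\tilde c_k = c_{k+1}$ (which satisfy $\tilde r_k \leq \tilde a_k \tilde r_{k+1}^{1/2} + \tilde b_k + \tilde c_k$ for $k \in \{1,\dots,m-1\}$ by construction). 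This yields an upper bound on $r_2$. Then I take the square root of that bound and multiply by $a_1$, reducing all quantitative work to tracking the resulting exponents.

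The one analytic ingredient needed is the subadditivity $(x+y+z)^{1/2} \leq x^{1/2} + y^{1/2} + z^{1/2}$ for $x,y,z \geq 0$, which lets me distribute the $1/2$ power over the three groups of terms in the inductive bound for $r_2$, and also lets me push the $1/2$ inside each of the two sums (using the stronger fact $(\sum_k y_k)^s \leq \sum_k y_k^s$ for $s \in (0,1]$). After taking $1/2$-powers, every exponent $1/2^{j-2}$ on an $a_j$, $b_k$ or $c_k$ becomes $1/2^{j-1}$, and after re-indexing the products and sums by the shift $j = i+1$, $k = \ell+1$, the bound takes the form asserted by the lemma except that the products start at $j=2$ and the sums at $k=2$.

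The final bookkeeping step absorbs $a_1 = a_1^{1/2^{0}}$ into each product to extend it to start at $j=1$, and appends the terms $b_1 = b_1^{1/2^{0}}$ and $c_1 = c_1^{1/2^{0}}$ (coming from the initial recurrence) to make each sum start at $k=1$. This matches the claimed formula for parameter $m+1$, completing the induction. The only potential pitfall is the index-shift gymnastics: one must be careful that the exponent $1/2^{i-1}$ appearing in the hypothesis becomes $1/2^{i-2}$ in the shifted problem and returns to $1/2^{i-1}$ after taking the square root, and that the empty-product and empty-sum conventions are handled correctly when absorbing the $a_1$, $b_1$, $c_1$ terms. No deeper tool is required.
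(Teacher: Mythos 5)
Your proof is correct and takes essentially the same approach as the paper: the paper dismisses the lemma with a one-line ``elementary recurrence,'' and your induction with shifted indices plus the subadditivity $(\sum_k y_k)^{1/2} \le \sum_k y_k^{1/2}$ is precisely the careful write-up of that recurrence.
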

\begin{proof}
The proof is through an elementary recurrence.
\end{proof}
\begin{corollary}
\label{cor:technical_lemma}
Assume that for $s \geq 2$,  $k = \{1, \ldots, s-2\}$ and $\alpha, \beta, \gamma, \kappa_1, \kappa_2 \geq 1$, it holds that
\begin{equation}
\label{eq: technical_inequality}
    r_k \leq \alpha^{1/2} 2^{(s-k)/4} r_{k+1}^{1/2} + \kappa_0 \beta^{2^{k-1}} 2^{k/4} 2^{s/4} + \kappa_1 \gamma^{2^{k-1}} 2^{s/2}\, .
\end{equation}
Then,
\begin{equation*}
r_1 \leq \alpha 2^{s/2} r_{s-1}^{1/2^{s-2}} + \alpha \{\beta \kappa_0 + \gamma \kappa_1\} 2^{s/2} (s-2)\eqsp.
\end{equation*}
\end{corollary}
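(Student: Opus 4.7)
The plan is to apply Lemma~\ref{lem:technical_lemma} directly with the identifications $m = s-1$, $a_k = \alpha^{1/2} 2^{(s-k)/4}$, $b_k = \kappa_0 \beta^{2^{k-1}} 2^{k/4+s/4}$, and $c_k = \kappa_1 \gamma^{2^{k-1}} 2^{s/2}$, which matches the hypothesis \eqref{eq: technical_inequality}. Writing $P_\ell := \prod_{i=1}^{\ell-1} a_i^{1/2^{i-1}}$ for the accumulated product, the conclusion of Lemma~\ref{lem:technical_lemma} reduces the problem to bounding three expressions: $P_{s-1}\, r_{s-1}^{1/2^{s-2}}$, $\sum_{\ell=1}^{s-2} P_\ell\, b_\ell^{1/2^{\ell-1}}$, and $\sum_{\ell=1}^{s-2} P_\ell\, c_\ell^{1/2^{\ell-1}}$. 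The target form of the inequality makes the strategy transparent: each $\ell$-summand must be bounded uniformly by $\alpha\, 2^{s/2}$ times either $\beta\kappa_0$ or $\gamma\kappa_1$, so that the sum over $\ell$ contributes only the factor $(s-2)$.

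The first key step is a closed form for $P_\ell$. Since $a_i^{1/2^{i-1}} = \alpha^{1/2^i}\, 2^{(s-i)/2^{i+1}}$, we get $P_\ell = \alpha^{1-1/2^{\ell-1}}\, 2^{E_\ell}$, where $E_\ell := \sum_{i=1}^{\ell-1} (s-i)/2^{i+1}$. Using the standard identities $\sum_{i\geq 1} 1/2^{i+1} = 1/2$ and $\sum_{i\geq 1} i/2^{i+1} = 1$, this evaluates to $E_\ell = s/2 - s/2^\ell - B_\ell$, where $B_\ell := \sum_{i=1}^{\ell-1} i/2^{i+1} \in [0,1]$. In particular $P_\ell \leq \alpha\, 2^{s/2}$ uniformly in $\ell$ (invoking $\alpha \geq 1$ to absorb the fractional exponent), which immediately handles the leading term $P_{s-1}\, r_{s-1}^{1/2^{s-2}} \leq \alpha\, 2^{s/2}\, r_{s-1}^{1/2^{s-2}}$.

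The critical cancellation for the $b$- and $c$-contributions is that the $2^{k-1}$ appearing as exponent of $\beta,\gamma$ in the hypothesis is calibrated precisely so that after the $2^{\ell-1}$-th root, $\beta^{2^{\ell-1}}$ and $\gamma^{2^{\ell-1}}$ become clean linear factors $\beta,\gamma$. For the $c$-term, the $2$-exponent in $P_\ell\, c_\ell^{1/2^{\ell-1}}$ is $E_\ell + s/2^\ell = s/2 - B_\ell \leq s/2$, and the fractional exponents on $\alpha,\kappa_1$ are at most $1$, so $P_\ell\, c_\ell^{1/2^{\ell-1}} \leq \alpha\gamma\kappa_1\, 2^{s/2}$. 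For the $b$-term the $2$-exponent becomes $E_\ell + \ell/2^{\ell+1} + s/2^{\ell+1} = s/2 - (s-\ell)/2^{\ell+1} - B_\ell \leq s/2$, since $\ell \leq s-2$, yielding $P_\ell\, b_\ell^{1/2^{\ell-1}} \leq \alpha\beta\kappa_0\, 2^{s/2}$. Summing over $\ell \in \{1,\ldots,s-2\}$ produces the two sums bounded by $(s-2)\alpha\beta\kappa_0\, 2^{s/2}$ and $(s-2)\alpha\gamma\kappa_1\, 2^{s/2}$, and assembling the three pieces gives the claimed bound.

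The only real obstacle is bookkeeping: making sure every geometric-type partial sum arising in the exponents of $2$ telescopes to something $\leq s/2$, and that every fractional power of a constant $\geq 1$ is bounded by the constant itself. This is exactly where the assumptions $\alpha, \kappa_0, \kappa_1 \geq 1$ and the $2^{k-1}$-calibration of $\beta, \gamma$ in \eqref{eq: technical_inequality} pay off, collapsing a naively exponential $s$-dependence into the linear factor $(s-2)$.
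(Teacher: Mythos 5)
Your proof is correct and takes essentially the same route as the paper: both apply \Cref{lem:technical_lemma} with the identical choices $a_k = \alpha^{1/2}2^{(s-k)/4}$, $b_k = \kappa_0\beta^{2^{k-1}}2^{k/4+s/4}$, $c_k = \kappa_1\gamma^{2^{k-1}}2^{s/2}$, bound the cumulative product $\prod_{i=1}^{\ell-1}a_i^{1/2^{i-1}}$ by $\alpha\,2^{s/2}$ using $\alpha \geq 1$, and observe that the exponents of $2$ in each summand telescope to at most $s/2$. You simply carry out the exponent bookkeeping (the closed form $E_\ell = s/2 - s/2^{\ell} - B_\ell$) more explicitly than the paper's terse one-line verification.
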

\begin{proof}
We apply \Cref{lem:technical_lemma} with 
\[
a_k = \alpha^{1/2}2^{(s-k)/4}\eqsp, \quad b_k= \kappa_0 \beta^{2^{k-1}} 2^{k/4} 2^{s/4}\eqsp, \quad c_k = \kappa_1 \gamma^{2^{k-1}} 2^{s/2}\eqsp.
\]
Now for any $m \in \{1,\ldots,s-2\}$, 
\[
\prod_{i=1}^{m-1} a_i^{1/2^{i-1}} \leq \alpha^{1- 2^{-(m-1)}} 2^{\{s/2\}(1-2^{-(m-1)})}\eqsp,
\]
and with simple algebraic manipulations we get 
\begin{align*}
\sum_{\ell=1}^{s-2} \prod_{i=1}^{\ell-1} a_i^{1/2^{i-1}}  b_\ell^{1/2^{\ell-1}} + \sum_{\ell=1}^{m-1} \prod_{i=1}^{\ell-1} a_i^{1/2^{i-1}}  c_\ell^{1/2^{\ell-1}} 
&\leq \alpha (\beta \kappa_0 + \gamma \kappa_1) \sum_{\ell=1}^{s-2} 2^{\{s/2\} (1-2^{-(\ell-1)})} 2^{\ell/2^{\ell+1}} 2^{s/2^{\ell+1}} \\
&\leq \alpha (\beta \kappa_0 + \gamma \kappa_1) 2^{s/2} (s-2)\eqsp.
\end{align*}
\end{proof}

\begin{lemma}
\label{lem: pow_of_funct}
Let $h \in \Lclass_{\alpha, V}$ with $\alpha \in (0, 1/2]$ Then for any $i \in \mathbb{N}$ such that $2^i \alpha \leq 1$, it holds
\begin{equation*}
\Nnorm[2^i\alpha, V]{h^{2^i}} \leq (\Nnorm[\alpha, V]{h})^{2^i}2^{i} \eqsp.
\end{equation*}
\end{lemma}
\begin{proof}
Using the definition of $\Lclass_{\alpha, V}$, we get for any $r \leq i$ that
\begin{equation*}
\textstyle{
|h(z)|^{2^r} \leq (\Nnorm[\alpha, V]{h})^{2^r}  V^{2^r\alpha}(z)
}\eqsp.
\end{equation*}
Using Jensen's inequality, for $a, b > 0$ and $\beta \in (0, 1]$, we get $(a^\beta + b^\beta)/(a + b)^\beta \leq 2$. Thus,
\begin{equation}
\label{eq:technical_equat}
\frac{|h(z)^{2^r} + h(z')^{2^r}|}{\bar{V}^{2^r\alpha}(z,z')}\leq 2 \Nnorm[\alpha, V]{h}^{2^r}\eqsp.
\end{equation}
Inequality above implies that 
\begin{equation}
\label{eq:decomposition_of_lip_prod_norm}
\begin{split}
\underset{z,z'}{\sup}\frac{|h^{2^i}(z) - h^{2^i}(z')|}{c^{1/2}(z,z')\bar{V}^{2^i\alpha}(z,z')} 
&\leq \underset{z,z'}{\sup} \bigl\{\frac{|h(z) - h(z')|}{c^{1/2}(z,z')\bar{V}^{\alpha}(z,z')}\frac{|h(z) + h(z')|}{\bar{V}^{\alpha}(z,z')}\ldots \frac{|h^{2^{i-1}}(z) + h^{2^{i-1}}(z')|}{\bar{V}^{2^{i-1}\alpha}(z,z')}  \bigr\}
\\
&\leq 2^{i-1} \Nnorm[\alpha, V]{h}^{2^i}\eqsp.
\end{split}
\end{equation}
\end{proof}

\begin{lemma}
\label{lem: P^n h class}
Assume \Cref{ass:cost_fun} and \Cref{assum:wasserstein-convergence}. Then for any $\alpha \in (0, 1/2]$, $h \in \Lclass_{\alpha, V}$ and $n \in \nset$, it holds that $\MK^n h - \pi(h) \in \Lclass_{\alpha,V}$ with $\Nnorm[\alpha, V]{\MK^n h - \pi(h)} \leq 2 \varsigma \varkappa_{c}^{\alpha}\{\pi(V)\}^{\alpha}  \Nnorm[\alpha,V]{h} (1/4)^{\lfloor n/\tau\rfloor \alpha}$.
\end{lemma}
\begin{proof}
Using \Cref{assum:wasserstein-convergence}, we get
\begin{equation}
\label{eq:P^n_h_bound_lip}
|\MK^nh(z) - \MK^nh(z')| \leq  \Nnorm[\alpha,V]{h}\Wass{c^{1/2}\bar{V}^{\alpha}}(\delta_z\MK^n, \delta_{z'}\MK^n)\leq 2 \Nnorm[\alpha,V]{h} c^{1/2}\bar{V}^{\alpha} \varsigma \varkappa_{c}^{\alpha}(1/4)^{\lfloor n/\tau\rfloor \alpha}\eqsp.
\end{equation}
Moreover,
\begin{align*}
\textstyle   |\MK^nh(z) - \pi(h)| &\textstyle \leq  \Nnorm[\alpha,V]{h}\Wass{c^{1/2}\bar{V}^{\alpha}}(\delta_z\MK^n, \pi\MK^n) \leq \Nnorm[\alpha,V]{h} \varsigma \varkappa_c^{\alpha}(1/4)^{\lfloor n/\tau\rfloor \alpha}\int \{V^{\alpha}(z) + V^{\alpha}(z')\}\pi(\rmd z') \\ 
&\textstyle \leq \Nnorm[\alpha,V]{h} \varsigma \varkappa_c^{\alpha}(1/4)^{\lfloor n/\tau\rfloor \alpha}V^{\alpha}(z) \{\pi(V)\}^{\alpha}\eqsp,
\end{align*}
and the statement follows. 
\end{proof}

\begin{lemma}
\label{lem: product_of_two_funct}
Let $h_1 \in \Lclass_{\beta_1, \lyapW}$ and $h_2 \in \Lclass_{\beta_2, \lyapW}$ with $\beta_1, \beta_2 \in \rset_{+}$. Then,
\begin{equation*}
\Nnorm[\beta_1 + \beta_2, \lyapW]{h_1 h_2} \leq 2^{1+\beta_1 + \beta_2} \Nnorm[\beta_1, \lyapW]{h_1}  \Nnorm[\beta_2, \lyapW]{h_2}\eqsp.
\end{equation*}
\end{lemma}
\begin{proof}
The proof can be found in \cite[Lemma 28]{durmus2024probability}.
\end{proof}
\newpage
\section{Constants}
\label{sec:constants}
\begin{table}[!ht]
\begin{tabular}{l l l}
\hline
\bfseries Constant name & \bfseries Description & \bfseries Reference \\
\hline
$\bConst{\sf{Rm}, 1} = 60 \rme$, $\bConst{\sf{Rm}, 2} = 60$ & Constants from the Pinelis version \\
& of the Rosenthal inequality for martingales & \cite[Th.~4.1]{pinelis_1994} \\
\hline
$\ConstD_{\ref{lem:UGE_dyadic},1} = (16/3) (19/3)^{1/2} \bConst{\sf{Rm}, 1}$ & Constants in Rosenthal inequality & \Cref{lem:UGE_dyadic} \\
$\ConstD_{\ref{lem:UGE_dyadic},2} = (32/3) (\bConst{\sf{Rm}, 2}^{1/2} \bConst{\sf{Rm}, 1} ^2 + \bConst{\sf{Rm}, 2})$ & under \Cref{ass:UGE} & \\
\hline
$\ConstD_{\ref{lem:auxiliary_rosenthal},1} = (16/3) \bConst{\sf{Rm}, 1}, \,\, \ConstD_{\ref{lem:auxiliary_rosenthal},2} = 8 \bConst{\sf{Rm}, 2}$ & Simple Rosenthal under \Cref{ass:UGE} & \Cref{lem:auxiliary_rosenthal} \\
\hline
$\ConstD_{\ref{lem:recurrence-R-k},1} = (19/3)^{1/2}\bConst{\sf{Rm}, 1}^{1/2}$, & Recurrent bound $R_{k,s}$ & \Cref{lem:recurrence-R-k} \\
$\ConstD_{\ref{lem:recurrence-R-k},2} = 3\bConst{\sf{Rm}, 2}^{1/2}$ &  in terms of $R_{k+1,s}$ &  \\
\hline
 $\ConstD_{\ref{lem: auxiliary_rosenthal_VGE},1} =(8/3)\bConst{\sf{Rm}, 1}$,  $\ConstD_{\ref{lem: auxiliary_rosenthal_VGE},2} = (16/3)\bConst{\sf{Rm}, 2} + 11/3 $ & Simple Rosenthal under \Cref{ass:VGE} & \Cref{lem: auxiliary_rosenthal_VGE}\\
\hline
$\ConstD_{\ref{lem: rosenthal_VGE_for_2^s}, 1} = (8/3) (16/3)^{1/2} \bConst{\sf{Rm}, 1}^2\bigl(\bConst{\sf{Rm}, 2}^{1/2} + 1 \bigr) $ &  Constants in Rosenthal inequality & \Cref{lem: rosenthal_VGE_for_2^s}\\
${\ConstD_{\ref{lem: rosenthal_VGE_for_2^s}, 2} =(8\rme / 3) \bigl(\bConst{\sf{Rm}, 1}^2 \bConst{\sf{Rm}, 2}^{1/2} \sqrt{2} + \bConst{\sf{Rm}, 2}\bigr)}$ & under \Cref{ass:VGE} & \\
\hline
 $\ConstD_{\ref{lem: auxiliary_rosenthal_WGE}, 1} =  (8/3)\bConst{\sf{Rm}, 1}, \ConstD_{\ref{lem: auxiliary_rosenthal_WGE}, 2} = (16/3)\bConst{\sf{Rm}, 2} + 11/3$ & Simple Rosenthal under \Cref{ass:cost_fun} and \Cref{assum:wasserstein-convergence} & \Cref{lem: auxiliary_rosenthal_WGE}\\
\hline
$\ConstD_{\ref{lem: rosenthal_WGE_for_2^s}, 1} = (16/3)^{3/2} \bConst{\sf{Rm}, 1}^2\bigl( \sqrt{2} \bConst{\sf{Rm}, 2}^{1/2} + 1\bigr)$ &  Constants in Rosenthal inequality & \Cref{lem: rosenthal_WGE_for_2^s}\\
$\ConstD_{\ref{lem: rosenthal_WGE_for_2^s}, 2} = (8\rme/3) \bConst{\sf{Rm}, 2}^{1/2}(\bConst{\sf{Rm}, 1}^2 + \bConst{\sf{Rm}, 2}^{1/2})
$ & under \Cref{ass:cost_fun} and \Cref{assum:wasserstein-convergence}  & \\
\hline
\end{tabular}
\caption{Universal constants appearing in the main results}
\label{tab:univ_constants}
\end{table}
\end{appendix}
\bibliography{refs-1}       

\newpage
\section{Supplementary Material}
\label{sec:supplement}
\subsection{Checking assumptions on weighted Wasserstein ergodicity}
\label{sec:chech_wge}
In what follows, we provide a set of verifiable conditions that allows to verify \Cref{assum:wasserstein-convergence}. With the notations of Section 2.3 of the main paper, we make the following assumptions for the Markov kernel $\MK$:
\begin{assumptionC}
\label{assG:kernelP_q}
There exist a measurable function $V: \Zset \to [\rme;+\infty)$, $m \in \nsets$, $\lambda \in (0,1)$, $b \geq 0$, such that
\[
\textstyle{
\MK^m V(z) \leq \lambda V(z) + b
}\eqsp,
\]
Moreover, there exist $\lambda_{m,V} > 0$ and $b_{m,V} > 0$, such that, for $\ell \in \{1,\ldots,m-1\}$,
\[
\textstyle{
\MK^{\ell} V(z) \leq \lambda_{m,V} V(z) + b_{m,V}
}\eqsp.
\]
\end{assumptionC}
The assumption \Cref{assG:kernelP_q} is classical in the literature on Markov chains and is often referred to as the Foster-Lyapunov drift condition, see \cite[Chapter 19]{douc:moulines:priouret:soulier:2018}, \cite[Chapter 15-16]{meyn:tweedie}. Before proceeding with the next assumption, we need to state some additional definitions. We say that $\MKK$ is a kernel coupling of $\MK$ if for all $(z,z') \in \Zset^2$ and $\msa \in \Zsigma$, $\MKK((z, z'), \msa \times \Zset) = \MK (z, \msa)$ and $\MKK((z,z'), \Zset \times \msa) = \MK(z',\msa)$. If $\MKK$ is a kernel coupling of $\MK$, then for every $n \in \nset$, $\MKK^n$ is a kernel coupling of $\MK^n$ and for every $\nu \in \couplingmeasure(\xi,\xi')$, $\nu \MKK^n$ is a coupling of $(\xi \MK ^n,\xi'\MK^n)$, which implies that
\[
\Wass{\cost}(\xi \MK ^n,\xi' \MK ^n) \leq \int_{\Zset\times\Zset} \MKK^n\cost(z,z') \nu(\rmd z\rmd z')\eqsp.
\]
see \cite[Corollary 20.1.4]{douc:moulines:priouret:soulier:2018}. For any  probability measure $\zeta$ on $(\Zset^2,\Zsigma^{\otimes 2})$, we denote by $\PP_{\zeta}^\MKK$ and  $\PE_{\zeta}^\MKK$ the probability and  the
expectation on the canonical space $((\Zset^2)^\nset,(\Zsigma^{\otimes 2})^{\otimes \nset})$  such that the canonical process
$\{(Z_n, Z_n'), n \in \nset\}$ is a Markov chain with initial
probability $\mathcal{C}$ and Markov kernel $\MKK$. We write
$\PE_{z,z'}^{\MKK}$ instead of $\PE_{\delta_{z,z'}}^{\MKK}$.
Consider the following assumption.
\begin{assumptionC}
\label{assG:kernelP_q_contractingset_m}
There exist a kernel coupling $\MKK$ of $\MK$, $m \in \nsets, \minorwas \in (0,1)$, $\kappa_{\MKK} \geq 1$, so that
\begin{equation}
\label{eq:assG:kernelP_q_contractingset_m}
\textstyle{
\sup_{\ell \in \{1,\ldots,m-1\}} \MKK^{\ell} \metricc(z,z') \leq \kappa_{\MKK} \metricc(z,z')\eqsp, \qquad \MKK^m \metricc(z,z') \leq (1 - \minorwas \indi{\CKset}(z,z'))\metricc(z,z')
}\eqsp,
\end{equation}
with $\CKset=\{V \leq r\} \times \{V \leq r\}$, $\lambda$, $b$ in \Cref{assG:kernelP_q} and $r$ are given such that the condition $\bar{\lambda} =\lambda +2b/(r+1) < 1$ holds.
\end{assumptionC}
Without loss of generality, we consider the case where the constant $m$ in \Cref{assG:kernelP_q} and \Cref{assG:kernelP_q_contractingset_m} is the same. Define for $z,z'\in\Zset$, $\bar V(z,z') = \{V(z) + V(z')\}/2$, and $\bar r = (r+1)/2$. Consider the fixed point equation with unknown $\delta \geq 0$,
\begin{equation}
\label{eq: delta def}
(1-\minorwas) \bigl(\frac{\bar{\lambda} + b + \delta}{1+\delta}\bigr) = \frac{\bar{\lambda} \bar{r}+\delta}{\bar{r}+\delta}\eqsp.
\end{equation}
Since necessarily, $\bar{\lambda} + b \geq 1$, the left-hand side of this equation is  a decreasing function of $\delta$, while the right-hand side is an increasing function. Hence, \eqref{eq: delta def} has a unique positive root (denoted by $\rootwas$) if $(1-\minorwas)(\bar{\lambda} + b) > \bar{\lambda}$, and we define
\begin{equation}
\label{eq:delta_star_def}
\deltawas =
\begin{cases}
\rootwas & \text{ if } (1-\minorwas)(\bar{\lambda} + b) > \bar{\lambda}\eqsp, \\
0 & \text{otherwise}\eqsp.
\end{cases}
\end{equation}
The proof of the statement below is a slightly modified proof of the weak Harris theorem found, for example, in \cite[Section 4.7]{durmus2024probability}. The main feature we need to include in the proof is the $m$-step drift condition in \Cref{assG:kernelP_q} that does not necessarily imply that the corresponding Markov kernel contracts in one step. This is the case, for example, with the LSA example considered in the Section 4.

\begin{proposition}
\label{prop:wasser:geo}
Assume \Cref{assG:kernelP_q}, \Cref{assG:kernelP_q_contractingset_m} and \Cref{ass:cost_fun}. Then for any $(z,z')\in \Zset^2$, $\alpha \in (0,1/2]$, and $n\in \nset$, it holds
\begin{equation}
\label{eq: constraction}
\PE_{z,z'}^{\MKK}[\cost^{1/2}(Z_n, Z_n') \bar V^{\alpha}(Z_n, Z'_n)] \leq \kappa_{\MKK}^{1/2} c_{\MKK}^{\alpha}  \cost^{1/2}(z,z') \bar{V}^{\alpha}(z,z') \varrho^{\alpha n/m}  \eqsp,
\end{equation}
where $\bar{\lambda}$ is defined in \Cref{assG:kernelP_q_contractingset_m}, and
\begin{equation}
\label{eq:def:rho}
\textstyle{
\varrho = \frac{\bar{\lambda} \bar{r}+\delta_*}{\bar{r}+\delta_*} < 1 \eqsp,  \quad c_{\MKK} = (\lambda_{m,V} + b_{m,V} + \delta_*) / \varrho^{2m}
}\eqsp.
\end{equation}
\end{proposition}
\begin{proof}
Note that $2\alpha \leq 1$. Due to the definition of $\bar{V}$ the Markov kernel $\MKK^{m}$ satisfies the drift condition of the form
\begin{equation}
\label{eq:geometric-drift-condition_m}
\MKK^m \bar{V} \leq \bar{\lambda} \bar{V} + b \indi{\CKset} \eqsp,
\end{equation}
where $\CKset$ is defined in \Cref{assG:kernelP_q_contractingset_m}. For $\delta  \geq 0$, set $\bar{V}_{\delta}=\bar{V}+\delta$ and
\begin{equation}
\label{eq:rho_tilde_def}
\textstyle{
\tilde{\rho}_{\alpha, \delta} = \sup_{(z,z') \in \Zset^2} \bigl[(1-\minorwas \indi{\CKset}(z,z'))^{1/2} \lr{ \frac{\MKK^m \bar V^{2\alpha}_\delta(z,z')}{\bar{V}^{2\alpha}_\delta(z,z')}}^{1/2}\bigr]
}\eqsp,
\end{equation}
which is finite since $\MKK^m$ satisfies~\eqref{eq:geometric-drift-condition_m}. We will adjust parameter $\delta$ later during the proof. Furthermore, H\"older's inequality and \Cref{assG:kernelP_q_contractingset_m} yield
  \begin{align*}
    \MKK^m(\metricc^{1/2} \bar{V}_\delta^{\alpha})
    & \leq (\MKK^m\metricc)^{1/2} (\MKK^m\bar V^{2\alpha}_\delta)^{1/2} \\
    & \leq \lrb{(1-\epsilon \indi{\CKset})^{1/2} \lr{{\MKK^m\bar V^{2\alpha}_\delta}/{\bar{V}^{2\alpha}_\delta}}^{1/2} }
      \metricc^{1/2} \bar{V}^{\alpha}_\delta \leq \tilde{\rho}_{\alpha, \delta}  \metricc^{1/2} \bar{V}_\delta^{\alpha} \eqsp.
  \end{align*}
  Since $\bar{V} \leq \bar{V}_\delta$, we have for $k \geq 1$, that
\begin{equation}
\label{eq:wasser:geo:K_n}
\textstyle{
\MKK^{mk} (\metricc^{1/2} \bar V^{\alpha})\leq \MKK^{mk} (\metricc^{1/2} \bar V_\delta^{\alpha}) \leq \tilde{\rho}_{\alpha, \delta}^k \metricc^{1/2} \bar {V}_\delta^{\alpha}
}\eqsp.
\end{equation}
For $\ell \in \{0,\dots, m-1\}$, the drift condition \Cref{assG:kernelP_q} and Jensen's inequality imply
\begin{align*}
\MKK^\ell\bar V^{2\alpha}_\delta \leq (\MKK^\ell\bar V_\delta)^{2\alpha} \leq (\lambda_{m,V} \bar{V} + b_{m,V} + \delta)^{2\alpha} \leq (\lambda_{m,V} + b_{m,V} + \delta)^{2\alpha} \bar{V}^{2\alpha}\eqsp.
\end{align*}
Thus, with H\"older's and \Cref{assG:kernelP_q}, we get
\begin{equation*}
\MKK^\ell(\metricc^{1/2} \bar{V}_\delta^{\alpha})
    \leq (\MKK^\ell\metricc)^{1/2} (\MKK^\ell\bar V^{2\alpha}_\delta)^{1/2} \\
    \leq \boundmetric^{1/2} (\lambda_{m,V} + b_{m,V} + \delta)^{\alpha} \metricc^{1/2} \bar V^{\alpha} \eqsp.
\end{equation*}
This inequality and \eqref{eq:wasser:geo:K_n}   imply
\begin{equation}
  \label{eq:prop:wasser:geo_1}
      \MKK^{km + \ell}(\metricc^{1/2} \bar{V}_\delta^{\alpha}) \leq \tilde{\rho}_{\alpha, \delta}^k \MKK^\ell(\metricc^{1/2}\bar {V}_\delta^{\alpha}) \leq \tilde{\rho}_{\alpha, \delta}^k \boundmetric^{1/2} (\lambda_{m,V} + b_{m,V} + \delta)^{\alpha}  \metricc^{1/2} \bar V_{\delta}^{\alpha} \eqsp.
    \end{equation}
We now provide an upper bound on $ \tilde{\rho}_{\alpha, \delta}$.
Applying~\eqref{eq:geometric-drift-condition_m}, we obtain
\begin{equation}
\label{eq:wasser:geo:third}
(\MKK^m\bar V^{2\alpha}_\delta) /\bar V^{2\alpha}_\delta \leq \{\varphi(\bar{V})\}^{2\alpha} \indi{\CKset} + \{\psi(\bar{V})\}^{2\alpha} \indi{\CKset^c} \eqsp,
\end{equation}
with $\varphi(v) = (\bar{\lambda} v + b + \delta)/(v+\delta)$ and $ \psi(v)= (\bar{\lambda}\, v   + \delta)/(v+\delta)$. Since $\bar{V} \geq \indi{\CKset} +\bar{r}\indi{\CKset^c}$. The functions $\varphi$ and $\psi$ are decreasing on $[1;+\infty)$, we get
\begin{align*}
\textstyle{
(\MKK^m\bar V^{2\alpha}_\delta) / \bar V^{2\alpha}_\delta \leq \left\{\frac{\bar{\lambda} + b + \delta}{1+\delta} \right\}^{2\alpha} \indi{\CKset} + \left\{\frac{\bar{\lambda} \bar{r} + \delta}{\bar{r}+\delta} \right\}^{2\alpha}  \indi{\CKset^c}
}\eqsp.
\end{align*}
The previous inequality yields
\begin{equation*}
\textstyle{
(1-\minorwas \indi{\CKset})^{1/2} \lr{{\MKK^m\bar{V}^{2\alpha}_\delta}/{\bar{V}^{2\alpha}_\delta}}^{1/2} \leq \lrb{(1-\minorwas)^{1/2} \bigl(\frac{\bar{\lambda} + b + \delta}{1+\delta}\bigr)^{\alpha}}\indi{\CKset} + \lr{\frac{\bar{\lambda} \bar{r} + \delta}{\bar{r}+\delta}}^{\alpha} \indi{\CKset^c}
}\eqsp.
\end{equation*}
showing that
$\tilde{\rho}_{\alpha,\delta} \leq \rho_{\delta}^{2\alpha}$, where
\begin{equation}
\label{eq:def:rho_0}
\textstyle{
\rho_{\delta}=\lrb{(1-\minorwas)^{1/2} \bigl(\frac{\bar{\lambda} + b + \delta}{1+\delta}\bigr)^{1/2}} \vee \lr{\frac{\bar{\lambda} \bar{r}+\delta}{\bar{r}+\delta}}^{1/2}
}\eqsp.
\end{equation}
Now we choose $\delta = \deltawas$ defined in~\eqref{eq:delta_star_def}, and complete the proof setting $\ratewas = \rho_{\deltawas}$ in \eqref{eq:def:rho_0}.
\end{proof}

\begin{corollary}
\label{cor:wasserstein}
Under the assumptions of \Cref{prop:wasser:geo}, assumption \Cref{assum:wasserstein-convergence} holds with parameters
\[
\textstyle{
\tmix = m \log{4} / \log{(1/\varrho)}\eqsp, \quad \varkappa_{c} = c_{\MKK}\eqsp, \quad \varsigma = \kappa_{\MKK}^{1/2}
}\eqsp.
\]
\end{corollary}
\begin{proof}
The proof follows from \Cref{prop:wasser:geo} and \cite[Corollary~20.1.4]{douc:moulines:priouret:soulier:2018}.
\end{proof}

\subsection{Postponed proof of Section 4.1 (Linear Stochastic Approximation case)}
\label{sec:lsa_proofs}
For clarity, we restate that, for a fixed step size $\gamma > 0$, the LSA algorithm generates the sequence ${\theta_k}_{k \in \nset}$ defined by:
\begin{equation}
\label{eq:lsa_supp}
\textstyle \theta_{k} = \theta_{k-1} - \gamma \{ \funcA{Z_k} \theta_{k-1} - \funcb{Z_k} \} \eqsp,~~ k \geq 1 \eqsp.
\end{equation}

The fact that $-\bA$ is Hurwitz implies the following proposition:
\begin{proposition}[\protect{\cite[Proposition 1]{durmus2021tight}}]
\label{fact:Hurwitzstability_supp}
Let $-\bA$ be a Hurwitz matrix. Then there exists a unique symmetric positive definite matrix $\mathbf{P}$ satisfying the equation
$\bA^\top \mathbf{P} + \mathbf{P} \bA =  \Id$. In addition, setting
\begin{equation}
\label{eq: kappa_def_supp}
a = \normop{\mathbf{P}}^{-1}/2\eqsp, \quad
\text{and} \quad \gamma_\infty = (1/2) \normop{\bA}[\mathbf{P}]^{-2} \normop{\mathbf{P}}^{-1} \wedge \normop{\mathbf{P}} \eqsp,
\end{equation}
it holds for any $\gamma \in [0, \gamma_{\infty}]$ that $\normop{\Id - \gamma \bA}[\mathbf{P}]^2 \leq 1 - a \gamma$, and $\gamma a \leq 1/2$.
\end{proposition}

Recall the definitions of the constants from the main paper:
\begin{align}
\label{eq:def_qcond_b_Q_supp}
&\textstyle{\qcond = \lambda_{\sf max}( \mathbf{P} )/\lambda_{\sf min}( \mathbf{P} )  \eqsp, \quad  b_{\mathbf{P}} = 2 \sqrt{\qcond} \bConst{A} \eqsp,} \quad \\
\label{eq:def_alpha_p_infty_supp}
&\textstyle{\gamma_{q,\infty} = \gamma_{\infty} \wedge \smallAconst/q \eqsp, \quad \smallAconst = a/\{2b_\mathbf{P}^2\}}\eqsp.
\end{align}

\subsubsection{Proof of Proposition 4.2}
Note that \eqref{eq:lsa_supp} may be rewritten as 
\begin{equation}
\label{eq:clever_lsa_step_supp}
\textstyle{
\theta_{n} - \thetas = (\Id - \gamma \funcA{Z_n})(\theta_{n-1} - \thetas) - \gamma \funcnoise{\State_{n}}
}\eqsp.
\end{equation}
Expanding the recurrence \eqref{eq:clever_lsa_step_supp}, we get 
\begin{align}
\label{eq:LSA_recursion_expanded}
\theta_{n} - \thetas = \textstyle \ProdBa_{1:n} \{ \theta_0 - \thetas \} - \gamma \sum_{j=1}^n \ProdBa_{j+1:n} \funcnoise{Z_j}\eqsp,
\end{align}
where $\ProdBa_{m:n}$, $m \leq n$, is the product of random matrices given by the formula 
\begin{equation} 
\label{eq:definition-Phi} \textstyle
\ProdBa_{m:n}  = \prod_{i=m}^n (\Id - \gamma \funcA{Z_i} ) \eqsp, \quad m,n \in\nsets, \quad m \leq n \eqsp.
\end{equation}
A key property from which we derive our bounds is an exponential stability result for the $p$-th moment of the product of $\ProdBa_{1:n}$,
which was established in \citep[Corollary 1]{durmus2021tight}.
\begin{theorem}\label{fact:exponential-stability-product}
Assume \Cref{assum:noise-level} and \Cref{assum:A-b}. Then, for any  $p,q \in \nset$, $2 \leq p \leq q$, $\gamma \in (0,\gamma_{q,\infty}]$ and $n \in \nset$, it holds
\begin{equation}
  \label{eq:concentration iid}
 \PE^{1/p}\left[ \normop{\ProdBa_{1:n}}^{p} \right]
\leq \sqrt{\qcond} d^{1/q} (1 - a \gamma + (q-1)  b_{P}^2 \gamma^2)^{n/2}\eqsp.
\end{equation}
\end{theorem}
The condition connecting the choice of the step size $\gamma$ in the iterations \eqref{eq:lsa_supp} with the orders $p$ and $q$ is unavoidable; see \cite[Example~1]{durmus2021tight}. This example also shows that for the fixed step size $\gamma$ one cannot expect subgaussian or subexponential concentration bounds for $\norm{\theta_n - \thetas}$, as well as for $\norm{\bar{\theta}_n - \thetas}$. Using the above bounds, one can specify moment bounds on the error of the last LSA iterate $\theta_n - \thetas$:
\begin{proposition}
[\protect{\cite[Proposition 1]{durmus2022finite}}]
\label{cor:LSA_err_bound_iid}
Assume \Cref{assum:noise-level} and \Cref{assum:A-b}. Then, for any  $p,q \in\nset$, $2 \leq p \leq q$, $\gamma \in (0, \gamma_{q,\infty}]$, $n \in \nset$, and $\theta_0 \in \rset^d$ it holds
\begin{align}
\label{eq:n_step_norm_propery}
\PE^{1/p}\left[\norm{\theta_n - \thetas}^{p}\right]
\leq d^{1/q} \qcond^{1/2} \left(1 - \gamma a/4\right)^{n} \norm{\theta_0 - \thetas} + d^{1/q} \ConstD \sqrt{\gamma a  p} \supconsteps \eqsp,
\end{align}
where
\begin{equation}
\label{eq:const_D_def}
\ConstD = (2 \qcond)^{1/2} a^{-1}(1 + 4 \qcond^{1/2} \bConst{A}  a^{-1})
\eqsp.
\end{equation}
\end{proposition}

For $p \geq 2$, define the auxiliary constants:
\begin{equation}
\label{eq:drift_constants_iid}
\bconst_{\gamma,p} = 1 + 2 \rme (\gamma a p)^{p/2} \ConstD^{p} \supconsteps^{p}\eqsp,
\end{equation}
\begin{equation}
\label{eq:drift_constants_iid_main}
m(\gamma,p) = \frac{2(1+\log(8 \qcond))}{\gamma a} \vee \frac{4p(1 + \log{4} + \log(\gamma a p)/2 + \log{\ConstD} + \log{\supconsteps})}{\gamma a}\eqsp,
\end{equation}
where $\ConstD$ is given in \eqref{eq:const_D_def}.
\begin{lemma}
\label{cor:drift_V_2_lemma}
Assume \Cref{assum:noise-level} and \Cref{assum:A-b}. Let $p \geq 2$. Then for any $\gamma \in (0, \gamma_{p(1+\log{d}),\infty}]$, and $m \geq m_0(\gamma,p)$ it holds that
\begin{equation}
\label{eq:m_step_drift_W_2}
\MK_{\gamma}^{m} V_p(\theta) \leq (1/2)^{p/2}V_p(\theta) + \bconst_{\gamma,p}\eqsp.
\end{equation}
Moreover, we have for any $\ell \in \{1,\ldots,m-1\}$, that 
\begin{equation}
\label{eq:m_step_drift_W_p}
\MK_{\gamma}^{\ell} V_p(\theta) \leq \rme 2^{p} \qcond^{p/2} (1-\gamma a/4)^{p} V_p(\theta) + \bconst_{\gamma,p}\eqsp.
\end{equation}
\end{lemma}
\begin{proof}
Using \Cref{cor:LSA_err_bound_iid} with $q = p(1+\log{d})$, we get for any $m \in \nset$, $p \geq 2$, and $\gamma \in (0,\gamma_{p(1+\log{d}),\infty}]$, that
\begin{equation}
\label{eq:p_moment_bound_drift}
\PE\left[\norm{\theta_m - \thetas}^{p}\right] \leq   2^p d^{1/(1+\log{d})}  \qcond^{p/2} \left(1 - \gamma a/4\right)^{mp} \norm{\theta_0 - \thetas}^{p} + 2^{p} d^{1/(1+\log{d})}\,\ConstD^{p}\,(\gamma a p)^{p/2} \supconsteps^{p}\eqsp.
\end{equation}
Note that for $d \geq 1$, $d^{1/(1+\log{d})} \leq \rme$, and by definition of  $m_0(\gamma,q)$, it holds that
\[
4 \rme \qcond (1- \gamma a/4)^{2m_0(\gamma,q)}\leq 4 \rme \qcond \rme^{-\gamma a m_0(\gamma,q)/2} \leq 1/2\eqsp.
\]
showing \eqref{eq:m_step_drift_W_2}. The proof of \eqref{eq:m_step_drift_W_p} is along the same lines.
\end{proof}

Now, following \cite{durmus2021tight}, we consider the synchronous coupling construction defined by the recursions
\begin{equation}
\label{eq:sync_coupling_construction}
\begin{split}
\theta_{m} &= \left(\Id - \gamma \Am(Z_m)\right)\theta_{m-1} + \gamma \bm(Z_m) \eqsp, \quad \theta_{0} = \theta\eqsp, \\
\theta_{m}^{\prime} &= \left(\Id - \gamma \Am(Z_m)\right)\theta^{\prime}_{m-1} + \gamma \bm(Z_m) \eqsp, \quad \theta^{\prime}_{0} = \theta^{\prime}\eqsp.
\end{split}
\end{equation}
The pair $\{(\theta_m,\theta^\prime_m)\}_{m=0}^\infty$ defines a Markov chain. We denote by $\MKK_{\gamma}$ the associated kernel, which is a kernel coupling of $\MK_{\gamma}$.
\begin{lemma}
\label{lem:contraction_wasserstein_LSA}
Assume \Cref{assum:A-b} and let $p \geq 2$. Then for any $\gamma \in (0,\gamma_{p(1+\log{d}),\infty}]$, $\theta,\theta^\prime \in \rset^{d}$, and $m \geq m_{0}(\gamma,p)$, it holds that
\begin{equation}
\label{eq:assG:kernelP_q_contractingset_m_0}
\PE_{\theta_0,\theta_0^{\prime}}^{\MKK_{\gamma}}[\norm{\theta_m - \theta_m^{\prime}}^2] \leq (1/2) \norm{\theta_0 - \theta_0^\prime}^{2}\eqsp.
\end{equation}
\end{lemma}
\begin{proof}
By induction, we obtain from \eqref{eq:lsa} and \eqref{eq:LSA_recursion_expanded} the following decomposition:
\begin{align}
\theta_{n} - \thetas = \textstyle \ProdBa_{1:n} \{ \theta_0 - \thetas \} - \gamma \sum_{j=1}^n \ProdBa_{j+1:n} \funcnoise{Z_j}\eqsp.
\end{align}
Using that $\gamma \in (0, \gamma_{q,\infty}]$ and applying \Cref{cor:LSA_err_bound_iid}, we obtain
\begin{align}
\PE_{\theta_0,\theta_0^{\prime}}^{\MKK_{\gamma}}[\norm{\theta_m - \theta_m^{\prime}}^2] &= \PE\bigl[\norm{\ProdBa_{1:m}(\theta_0 - \theta_0^{\prime})}^{2}\bigr] \leq \PE\bigl[\norm{\ProdBa_{1:m}}^{2}\bigr] \norm{\theta_0 - \theta^{\prime}_0}^{2} \nonumber \\
&\leq \qcond \rme (1 - \gamma a/4)^{2m} \norm{\theta_0 - \theta^{\prime}_0}^{2} \label{eq:K_m_bound_c}\eqsp.
\end{align}
Now it remains to use that $m \geq m_0(\gamma,q)$ to ensure that $\qcond \rme (1 - \gamma a/4)^{2m} \leq 1/2$.
\end{proof}

We now have all the tools to prove Proposition 4.2 from the main paper. We restate it below with explicit constants.

\begin{proposition}[Proposition 4.2 restated]
\label{th:explicit_constants_under_A1_more_specific_supp}
Assume \Cref{assum:noise-level} and \Cref{assum:A-b}. Let $p \geq 2$. Then, for $\gamma \in (0,\gamma_{p(1+\log d),\infty}]$, the Markov kernel $\MK_{\gamma}$ defined in (20) in the main document, has a unique invariant distribution $\pi_{\gamma}$ and satisfies \Cref{ass:cost_fun} and \Cref{assum:wasserstein-convergence} with the drift function $V_p$ defined in (21), and
\begin{equation*}
\begin{split}
\cost(\theta,\theta') &= 1 \wedge \norm{\theta - \theta'}^2\eqsp, \quad  \varsigma = 1 \wedge \rme \qcond (1-\gamma a/4)\eqsp, \quad \varkappa_{c} = (\rme 2^{p} \qcond^{p/2} (1-\gamma a/4)^{p} + 3 \bconst_{\gamma,p}) / \varrho^{2m} \eqsp, \\
\varrho &\leq \frac{(3/8)(4\bconst_{\gamma,p} + 1) + 2\bconst_{\gamma,p}}{(4\bconst_{\gamma,p} + 1)/2 + 2\bconst_{\gamma,p}} \leq 7/8 \eqsp, \quad \tmix = m \log 4 / \log(8/7)\eqsp,
\end{split}
\end{equation*}
and block size $m = m(\gamma,p)$ defined in \eqref{eq:drift_constants_iid_main}.
\end{proposition}

\begin{proof}
To complete the proof we aim to apply the result of \Cref{cor:wasserstein}. In order to do this, we need to check that the assumptions \Cref{assG:kernelP_q}, \Cref{assG:kernelP_q_contractingset_m} and \Cref{ass:cost_fun} hold. Our underlying space
$\Zset = \rset^{d}$, which is equipped with the Euclidean distance $\metricz(\theta,\theta') = \norm{\theta-\theta'}, \theta,\theta' \in \rset^{d}$, and is a complete separable space. In addition, we set $\cost(\theta,\theta') = 1 \wedge \norm{\theta-\theta'}^2$, therefore, \Cref{ass:cost_fun} holds. We now check \Cref{assG:kernelP_q} for the kernel $\MK_{\gamma}$ and $m = m_0(\gamma,p)$ (see definition in Section 5.8 of the main paper). It is directly established by \Cref{cor:drift_V_2_lemma} that \Cref{assG:kernelP_q} holds with 
\[
V(z) = V_p(z)\eqsp, \quad \lambda = 1/2\eqsp, \quad b = \bconst_{\gamma,p}\eqsp, \quad \lambda_{m,V_p} =  \rme 2^{p} \qcond^{p/2} (1-\gamma a/4)^{p}\eqsp, \quad b_{m,V_p} = \bconst_{\gamma,p}\eqsp.
\]
Similarly, \Cref{lem:contraction_wasserstein_LSA} (see \eqref{eq:K_m_bound_c}) directly yields that for any $\ell \in \{1,\ldots,m-1\}$, it holds
\[
\MKK_{\gamma}^\ell \cost(\theta,\theta') = \MKK_{\gamma}^\ell (1 \wedge \norm{\theta-\theta'}^2) \leq 1 \wedge \MKK_{\gamma}^\ell (\norm{\theta-\theta'}^2) \leq (1 \wedge \rme \qcond (1-\gamma a/4)^2) c(\theta,\theta')\eqsp.
\]
Now, in order to ensure \Cref{assG:kernelP_q_contractingset_m}, we note that \Cref{lem:contraction_wasserstein_LSA} implies 
\[
\MKK_{\gamma}^\ell \cost(\theta,\theta') \leq 1 \wedge \rme \qcond (1-\gamma a/4)^{2m}\norm{\theta-\theta'}^2\eqsp.
\]
Thus, in order to ensure the second part of \Cref{assG:kernelP_q_contractingset_m}, we need to guarantee, that 
$$
\qcond (1-\gamma a/4)^{2m} \leq 1/2\eqsp, \text{ and }
\rme \qcond (1-\gamma a/4)^{2m}\norm{\theta-\theta'}^2 \leq 1/2\eqsp.
$$
To check the second part, note that for $p \geq 2$, 
\begin{align*}
\rme \qcond (1-\gamma a/4)^{2m}\norm{\theta-\theta'}^2 
&\leq 2\rme \qcond \rme^{-\gamma a m/2}(\norm{\theta-\thetas}^2 + \norm{\theta'-\thetas}^2) \\
&\leq 2\rme \qcond \rme^{-\gamma a m/2}(V_p(\theta) + V_p(\theta'))\eqsp.
\end{align*}
Now we assume that $V_p(\theta), V_p(\theta') \leq r$, where $r = \rme^{\gamma a m/4}$. Then it remains to ensure that 
\[
4 \rme \qcond \rme^{-\gamma a m / 4} \leq 1/2\eqsp, \text{ and } \rme^{\gamma a m/4} \geq 4 \bconst_{\gamma,p}\eqsp.
\]
Both inequalities follows directly from the definition of block size $m$. Thus, \Cref{assG:kernelP_q_contractingset_m} holds with $\varkappa_{\MKK_{\gamma}} = 1 \wedge \rme \qcond (1-\gamma a/4)^{2}$, $r = 4\bconst_{\gamma,p}$, $\varepsilon = 1/2$, and $m$ given in Section 5.8 of the main paper. Now we directly apply \Cref{cor:wasserstein}.
\end{proof} 

\subsubsection{Proof of Lemma 4.3}
\begin{proof}
    Note that 
    \begin{equation}
    \bA\Sigma_{\pi_{\gamma}}\bA^T = \lim_{n \to \infty} n \PE_{\pi_{\gamma}}\bigl[\bA (\prtheta_n - \thetas)(\prtheta_n - \thetas)^T\bA^T\bigr]\eqsp.
    \end{equation}
    Following the \cite[Proposition 3]{durmus2022finite}, we write 
    \[
    \sqrt{n}\bA(\prtheta_n - \thetas) = W_1 + W_2 + W_3\eqsp,
    \]
    where 
    \begin{align}
    W_1 &= -\frac{1}{\sqrt{n}}\sum_{k=0}^{n-1}\funcnoise{\State_{k+1}}\eqsp, \\
    W_2 &= \frac{\theta_0-\theta_n}{\gamma\sqrt{n}}\eqsp, \\
    W_3 &= -\frac{1}{\sqrt{n}}\sum_{k = 0}^{n-1} \zmfuncA{\State_{k+1}} (\theta_k - \thetas)\eqsp,
    \end{align}
    and 
    \[
    \zmfuncA{\State_{k+1}} = \funcA{\State_{k+1}}-\bA\eqsp.
    \]
    We estimate each term separately.
    From the definition of $\Sigma_{\varepsilon}$ and $\funcnoise{z}$, 
    \begin{equation*}
        \PE_{\pi_\gamma}[W_1W_1^T] = \Sigma_{\varepsilon}.
    \end{equation*}   
    From H\"{o}lder's inequality and \Cref{cor:LSA_err_bound_iid}, we get
    \begin{equation*}
        \norm{\PE_{\pi_\gamma}[W_2W_2^T]} \leq \frac{4}{\gamma^2n}\PE_{\pi_\gamma}[\norm{\theta_0-\thetas}^2] \leq \frac{8d \ConstD^2 a \supconsteps^2}{\gamma n} \to 0\eqsp, \quad n \to \infty\eqsp,
    \end{equation*}
    where $\ConstD$ is defined in \eqref{eq:const_D_def}. For any $i \neq j$, since $Z_{i+1}$ and $Z_{j+1}$ are independent, we get   
    \[
    \PE_{\pi_\gamma}[\zmfuncA{\State_{i+1}}(\theta_i - \thetas)(\theta_j - \thetas)^T \{\zmfuncA{\State_{j+1}}\}^T] = 0\eqsp.
    \]
    For $i = j$, taking the conditional expectation with respect to $\State_{i}$, we obtain that 
    \begin{equation}
        \PE_{\pi_\gamma}[W_3W_3^T] = \PE[\zmfuncA{\State_{1}} \covgammatheta \{\zmfuncA{\State_{1}}\}^T] = \mathcal{O}(\gamma),
    \end{equation}
    where $\covgammatheta$ the marginal covariance matrix of $\theta_0$ under $\pi_{\gamma}$ defined below in \eqref{eq:last_iter_cov_matrix}. We now consider the term $W_1W_2^T$, the same bounds will hold for $W_2W_1^T$. Using  H\"{o}lder's inequality and \Cref{cor:LSA_err_bound_iid}, we obtain
    \begin{equation*}
        \norm{\PE_{\pi_\gamma}[W_1W_2^T]} \leq \PE^{1/2}_{\pi_\gamma}[\norm{W_1}^2]\PE^{1/2}_{\pi_\gamma}[\norm{W_2}^2] \leq \frac{2}{\gamma n}\sqrt{n \trace(\Sigma_{\varepsilon})}d^{1/2} \ConstD \sqrt{2\gamma a}\supconsteps \underset{n \rightarrow +\infty}{\rightarrow} 0.
    \end{equation*}
    Now consider the term $W_3W_2^T$. Using  H\"{o}lder's inequality and \Cref{cor:LSA_err_bound_iid} again, we get
     \begin{equation*}
         \norm{\PE_{\pi_\gamma}[W_3W_2^T]} \leq \PE^{1/2}_{\pi_\gamma}[\norm{W_3}^2]\PE^{1/2}_{\pi_\gamma}[\norm{W_2}^2] \leq \frac{2}{\gamma n}\sqrt{n \trace(\PE_{\pi_\gamma[W_3W_3^T]})}d^{1/2} \ConstD \sqrt{2\gamma a}\supconsteps \underset{n \rightarrow +\infty}{\rightarrow} 0\eqsp.
     \end{equation*}
     
     Now it remains to estimate the term $W_1W_3^T$. Using the fact that $\PE_{\pi_\gamma}[\theta_0] = \thetas$ and the independence of $Z_i$ for different $i$, we obtain that
     \[
     \PE_{\pi_\gamma}[W_1W_3^T] = \PE_{\pi_\gamma}[W_3W_1^T] = 0\eqsp.
     \]
     Combining all the inequalities above and taking the limit $n \rightarrow +\infty$ we complete the proof.
\end{proof}

For notation simplicity, denote by $\covgammatheta$ the marginal covariance matrix of $\theta_k$ under $\pi_{\gamma}$, that is, 
\begin{equation}
\label{eq:last_iter_cov_matrix}
\covgammatheta = \PE_{\pi_{\gamma}}[(\theta_0 - \thetas)(\theta_0 - \thetas)^T]\eqsp.
\end{equation}

\begin{lemma}
\label{lem:sigma_last}
Assume \Cref{assum:noise-level} and \Cref{assum:A-b}. Then for the step size 
\[
\gamma \in (0, \sigma_{\min}(\Id \otimes \bA + \bA \otimes \Id)/\bConst{A}^2)\eqsp,
\]
it holds that
\begin{equation}
\label{eq:vectorization_identity_sigma_bar}
\vec{\covgammatheta} = \gamma (\Id \otimes\bA + \bA\otimes \Id - \gamma \mathbf{U})^{-1} \vec{\Sigma_{\epsilon}}\eqsp.
\end{equation}
where the matrix $\mathbf{U} \in \rset^{d^2 \times d^2}$ is given by 
\begin{equation}
\label{eq:U_matr_def}
\mathbf{U} = \PE[\funcA{\State_1}\otimes\funcA{\State_1}]\eqsp.
\end{equation}
\end{lemma}
\begin{proof}
From the recurrence \eqref{eq:lsa}, we obtain that 
\[
\theta_1 - \thetas = (\Id - \gamma \funcA{\State_1})(\theta_0 - \thetas) - \gamma \funcnoise{\State_1}\eqsp. 
\]
taking second moment w.r.t. $\pi_{\gamma}$ from both sides, and using the fact that $\State_1$ is independent from $\theta_0$, we get 
\[
\covgammatheta = \PE[(I-\gamma \funcA{\State_1}) \covgammatheta (I-\gamma \funcA{\State_1})^T] + \gamma^2 \Sigma_{\varepsilon}\eqsp. 
\]
Expanding the brackets, we get 
\[
\bA \covgammatheta + \covgammatheta \bA^{\top} - \gamma \PE[\funcA{\State_1} \covgammatheta  \{\funcA{\State_1}\}^{\top}] = \gamma \Sigma_{\varepsilon}\eqsp.
\]
Applying vectorization operator to both sides of the equality, and setting 
\[
\mathbf{U} = \PE[\funcA{\State_1}\otimes\funcA{\State_1}]\eqsp,
\]
it remains to check that the matrix 
\[
\Id \otimes \bA + \bA \otimes \Id - \gamma \mathbf{U}
\]
is non-degenerate. Set 
\begin{equation}
\label{eq:S_R_def}
S = \Id \otimes \bA + \bA \otimes \Id \in \rset^{d^2 \times d^2}\eqsp.
\end{equation}
Then it is easy to observe that 
\begin{align*}
(S - \gamma \mathbf{U})^{-1} = S^{-1} + S^{-1} \sum_{k=1}^{\infty} \gamma^k (\mathbf{U} S^{-1})^{k}\eqsp,
\end{align*}
provided that $\gamma \norm{\mathbf{U} S^{-1}} < 1$. Since $\norm{\mathbf{C} \otimes \mathbf{D}} = \norm{\mathbf{C}} \norm{\mathbf{D}}$ for any matrices $\mathbf{C}$ and $\mathbf{D}$, 
\[
\norm{\mathbf{U}} \leq \bConst{A}^2\eqsp,
\]
and we get the identity \eqref{eq:vectorization_identity_sigma_bar}. 
\end{proof}

\subsection{Bernstein-type inequalities}
\label{sec:Bernstein_type_inequality}
Combining Lemma A.2 from the main paper with Theorems 2.4 and 2.7, we obtain the following Bernstein-type bounds:
\begin{corollary}
\label{cor:VGE_bound}
Assume \Cref{ass:VGE}. For any $\delta \in (0,1/\rme^2)$ it holds with $\PP_{\pi}$-probability at least $1-\delta$ that
\begin{equation*}
\begin{split}
|\Sstat_n| &\leq \bConst{\sf{Rm}, 1} \sqrt{2}\sqrt{n \ln(1/\delta))}\sigma_{\pi}(f) \\
&\qquad+ \bigl\{4\ConstD_{5.7,1} n^{1/4}\tmix^{3/4} + 4\ConstD_{5.7,2} \tmix \log ^{\beta+1}n\bigr\}\hat{\ln}(1/\delta)(2\beta/\rme)^{\beta}\varkappa\{\pi(V)\} \Vnorm[\log^{\beta}{V}]{f}\eqsp,
\end{split}
\end{equation*}
where we set $\hat{\ln}(s) = 2\ln(s)^{2+ \beta} \log_2(2 \ln(s))$.
\end{corollary}

\begin{corollary}
\label{cor:WGE_bound}
Assume  \Cref{ass:cost_fun} and \Cref{assum:wasserstein-convergence}. For any $\delta \in (0,1/\rme^2)$ it holds with $\PP_{\pi}$-probability at least $1-\delta$ that
\begin{multline*}
| \Sstat_n| \leq \bConst{\sf{Rm}, 1} \sqrt{2}\sqrt{n \ln(1/\delta))}\sigma_{\pi}(f) \\
+ \bigl\{4\ConstD_{5.10,1} n^{1/4}\tmix^{3/4}   + 4\ConstD_{5.10,2}\tmix \log ^{\beta+1}n\bigr\}\hat{\ln}(1/\delta)(2\beta/\rme)^{\beta}\varkappa_c\{\pi(V)\}\eqsp,
\end{multline*}
where $\hat{\ln}(s)$ is defined in \Cref{cor:VGE_bound}.
\end{corollary}

\end{document}